\providecommand{\U}[1]{\protect\rule{.1in}{.1in}}
\newtheorem{theorem}{Theorem}
\newtheorem{definition}[theorem]{Definition}
\newtheorem{lemma}[theorem]{Lemma}
\newtheorem{proposition}[theorem]{Proposition}
\journal{}
\begin{document}

\begin{frontmatter}

\title{Local Linearization - Runge Kutta Methods: a class of A-stable explicit integrators for dynamical systems}

\author[impa,uci]{H. de la Cruz\corref{cor1}}
\ead{hugo@impa.br}
\author[uval,icimaf]{R.J. Biscay}
\ead{rolando.biscay@uv.cl}
\author[icimaf]{J.C. Jimenez}
\ead{jcarlos@icmf.inf.cu}
\author[mgill]{F. Carbonell}
\ead{felix.carbonell@mail.mcgill.com}

\address[impa]{IMPA, Estrada Dona Castorina 110, Rio de Janeiro, Brasil}
\address[uci]{Universidad de Ciencias Inform\'aticas, La Habana, Cuba}
\address[uval]{CIMFAV-DEUV, Facultad de Ciencias, Universidad de Valparaiso, Chile}
\address[icimaf]{Instituto de Cibern\'etica, Matem\'atica y F\'isica, Calle 15 No. 551, La Habana, Cuba}
\address[mgill]{Montreal Neurological Institute, McGill University, Montreal, Canada}

\cortext[cor1]{Corresponding author}
\fntext[]{Supported by CNPq under grant no. 500298/2009-2}

\begin{abstract}
A new approach for the construction of high order A-stable explicit
integrators for ordinary differential equations (ODEs) is theoretically
studied. Basically, the integrators are obtained by splitting, at each time
step, the solution of the original equation in two parts: the solution of a
linear ordinary differential equation plus the solution of an auxiliary ODE.
The first one is solved by a Local Linearization scheme in such a way that
A-stability is ensured, while the second one can be approximated by any
extant scheme, preferably a high order explicit Runge-Kutta scheme. Results
on the convergence and dynamical properties of this new class of schemes are
given, as well as some hints for their efficient numerical implementation.
An specific scheme of this new class is derived in detail, and its
performance is compared with some Matlab codes in the integration of a
variety of ODEs representing different types of dynamics.

\end{abstract}

\begin{keyword}
Numerical integrators \sep A-stability \sep Local linearization\sep Runge Kutta methods \sep Variation of constants formula \sep Hyperbolic stationary points. \\
\textbf{MSC}: 65L20; 65L07
\end{keyword}

\end{frontmatter}

\section{Introduction}

It is well known (see, i.e., \cite{Cartwright 1992, Stewart 1992}) that
conventional numerical schemes such as Runge-Kutta, Adams-Bashforth,
predictor-corrector and others produce misleading dynamics in the
integration of Ordinary Differential Equations (ODEs). Typical difficulties
are, for instance, the convergence to spurious steady states, changes in the
basis of attraction, appearance of spurious bifurcations, etc. The essence
of such difficulties is that the dynamics of the numerical schemes (viewed
as discrete dynamical systems) is far richer than that of its continuous
counterparts. Contrary to the common belief, drawbacks of this type may not
be solved by reducing the step-size of the numerical method. Therefore, it
is highly desirable the development of numerical integrators that preserve,
as much as possible, the dynamical properties of the underlaying dynamical
system for all step sizes or relative big ones. In this direction, some
modest advances has been archived by a number of relative recent integrators
of the class of Exponential Methods, which are characterized by the explicit
use of exponentials to obtain an approximate solution. In fact, their
development has been encouraged because their capability of preserving a
number of geometric and dynamical features of the ODEs at the expense of
notably less computational effort than implicit integrators. This have
become feasible due to advances in the computation of matrix exponentials
(see, e.g., \cite{Hochbruck-Lubich97}, \cite{Sidje 1998}, \cite%
{Dieci-Papini00}, \cite{Celledoni-Iserles01}, \cite{Higham04}) and multiple
integrals involving matrix exponentials (see, e.g., \cite{Carbonell-etal05},
\cite{VanLoan78}). Some instances of this type of integrators are the
methods known as exponential fitting \cite{Liniger70}, \cite{Carroll93},
\cite{Voss 1988}, \cite{Cash 1981}, \cite{Iserles 1978}, exponential
integrating factor \cite{Lawson67}, exponential integrators \cite%
{Hochbruck-etal98},\cite{Hochbruck-Ostermann10}, exponential time
differencing \cite{Cox-Matthews02}, \cite{Kassam05}, truncated Magnus
expansion \cite{Iserles99}, \cite{Blanes-etal00}, truncated Fer expansion
\cite{Zanna99} (also named exponential of iterated commutators in \cite%
{Iserles 1984}), exponential Runge-Kutta \cite{Hochbruck-Ostermann04}, \cite%
{Hochbruck-Ostermann05}, some schemes based on versions of the variation of
constants formula (e.g., \cite{Norsett69}, \cite{Jain72}, \cite{Iserles81},
\cite{Pavlov-Rodionova87}, \cite{Friesner-etal89}), local linearization
(see, e.g., \cite{Pope63}, \cite{Ramos 1997a}, \cite{Jimenez02 AMC}, \cite%
{Jimenez05 AMC}, \cite{Carr11}), and high order local linearization methods
\cite{de la Cruz 06}, \cite{de la Cruz 07}, \cite{Jimenez09}, \cite%
{Hochbruck-Ostermann11}.

The present paper deals with the class of high order local linearization
integrators called Local Linearization-Runge Kutta (LLRK) methods, which was
recently introduced in \cite{de la Cruz 06} as a flexible approach for
increasing the order of convergence of the Local Linearization (LL) method
while retaining its desired dynamical properties. Essentially, the LLRK
integrators are obtained by splitting, at each time step, the solution of
the underlying ODE in two parts: the solution $\mathbf{v}$ of a linear ODE
plus the solution $\mathbf{u}$ of an auxiliary ODE. The first one is solved
by an LL scheme in such a way that the A-stability is ensured, while the
second one is integrated by any high order explicit Runge-Kutta (RK) scheme.
Likewise Implicit-Explicit Runge-Kutta (IMEX RK) and conventional splitting
methods (see e.g. \cite{McLachlan02}, \cite{Ascher-etal97}), the splitting
involved in the LLRK approximations is based on the representation of the
underlying vector field as the addition of linear and nonlinear components.
However, there are notable differences among these methods: i) Typically, in
splitting and IMEX methods the vector field decomposition is global instead
of local, and it is not based on a first-order Taylor expansion. ii) In
contrast with IMEX and LLRK approaches, splitting methods construct an
approximate solution by composition of the flows corresponding to the
component vector fields. iii) IMEX RK methods are partitioned (more
specifically, additive) Runge-Kutta methods that compute a solution $\mathbf{%
y}=\mathbf{v+u}$ by solving certain ODE for $\left( \mathbf{v,u}\right) $,
setting different RK coefficients for each block. LLRK methods also solve a
partitioned system for $\left( \mathbf{v,u}\right) $, but a different one.
In this case, one of the blocks is linear and uncoupled, which is solved by
the LL method. After inserting the (continuous time) LL approximation into
the second block, this is treated as a non-autonomous ODE, for which any
extant RK discretization can be used. On the other hand, it is worth noting
that the LLRK methods can also be thought of a flexible approach to
construct new A-stable explicit schemes based on standard explicit RK
integrators. In comparison with the well known Rosenbrock \cite{Bui79}, \cite%
{Shampine97} and Exponential Integrators \cite{Hochbruck-etal98},\cite%
{Hochbruck-Ostermann05} the A-stability of the LLRK schemes is achieved in a
different way. Basically, Rosenbrock and Exponential integrators are
obtained by inserting a stabilization factor ($1/(1-z)$ or $(e^{z}-1)/z$,
respectively) into the explicit RK formulas, whose coefficients must then be
determined to fulfil both A-stability and order conditions. In contrast,
A-stability of an LLRK scheme results from the fact that the component $%
\mathbf{v}$ associated with the linear part of the vector field is computed
through an A-stable LL scheme. Another major difference is that the RK
coefficients involved in the LLRK methods are not constrained by any
stability condition and they just need satisfy the usual order conditions
for RK schemes. Thus, the coefficients in the LLRK methods can be just those
of any standard explicit RK scheme. This makes the LLRK approach greatly
flexible and allows for simple numerical implementations on the basis of
available subroutines for LL and RK methods.

In \cite{de la Cruz 06}, \cite{de la Cruz Ph.D. Thesis} a number of
numerical simulations were carried out in order to illustrate the
performance of the LLRK schemes and to compare them with other numerical
integrators. With special emphasis, the dynamical properties of the LLRK
schemes were considered, as well as, their capability for integrating some
kinds of stiff ODEs. For these equations, LLRK schemes showed stability
similar to that of implicit schemes with the same order of convergence,
while demanding much lower computational cost. The simulations also showed
that the LLRK schemes exhibit a much better behavior near stationary
hyperbolic points and periodic orbits of the continuous systems than others
conventional explicit integrators. However, no theoretical support to such
findings has been published so far.

The main aim of the present paper is to provide a theoretical study of LLRK
integrators.\ Specifically, the following subjects are considered: rate of
convergence, linear stability, preservation of the equilibrium points, and
reproduction of the phase portrait of the underlying dynamical system near
hyperbolic stationary points and periodic orbits. Furthermore, unlike the
majority of the previous papers on exponential integrators, this study is
carried out not only for the discretizations but also for the numerical
schemes that implement them in practice.

The paper is organized as follows. In section 2, the formulations of the LL
and LLRK methods are briefly reviewed. Sections 3 and 4 deal with the
convergence, linear stability and dynamic properties of LLRK
discretizations. Section 5 focuses on the preservation of these properties
by LLRK numerical schemes. In the last section, a new simulation study is
presented in order to compare the performance of an specific order 4 LLRK
scheme and some Matlab codes in a variety of ODEs representing different
types of dynamics.

\section{High Order Local Linear discretizations \label{Section LLA}}

Let $\mathcal{D}\subset\mathbb{R}^{d}$ be an open set. Consider the $d$%
-dimensional differential equation%
\begin{align}
\frac{d\mathbf{x}\left( t\right) }{dt} & =\mathbf{f}\left( t,\mathbf{x}%
\left( t\right) \right) \text{, \ \ }t\in\left[ t_{0},T\right]
\label{ODE-LLA-1} \\
\mathbf{x}(t_{0}) & =\mathbf{x}_{0},  \label{ODE-LLA-2}
\end{align}
where $\mathbf{x}_{0}\in\mathcal{D}$ is a given initial value, and $\mathbf{f%
}:\left[ t_{0},T\right] \times\mathcal{D}\longrightarrow \mathbb{R}^{d}$ is
a differentiable function. Lipschitz and smoothness conditions on the
function $\mathbf{f}$ are assumed in order to ensure a unique solution of
this equation in $\mathcal{D}$.

In what follows, for $h>0$, $(t)_{h}$ will denote a partition $%
t_{0}<t_{1}<...<t_{N}=T$ of the time interval $\left[ t_{0},T\right] $ such
that
\begin{equation*}
\underset{n}{sup}(h_{n})\leq h<1,
\end{equation*}
where $h_{n}=t_{n+1}-t_{n}$ for $n=0,...,N-1$.

\subsection{Local Linear discretization}

Suppose that, for each $t_{n}\in\left( t\right) _{h}$, $\mathbf{y}_{n}\in%
\mathcal{D}$ is a point close to $\mathbf{x}\left( t_{n}\right) $. Consider
the first order Taylor expansion of the function $\mathbf{f}$ around the
point $(t_{n},\mathbf{y}_{n})$:

\begin{equation*}
\mathbf{f}\left( s,\mathbf{u}\right) \approx\mathbf{f(}t_{n},\mathbf{y}_{n})+%
\mathbf{f}_{\mathbf{x}}(t_{n},\mathbf{y}_{n})(\mathbf{u}-\mathbf{y}_{n})+%
\mathbf{f}_{t}(t_{n},\mathbf{y}_{n})(s-t_{n}),\text{ }
\end{equation*}
for $s\in\mathbb{R}$ and $\mathbf{u}\in\mathcal{D}$, where $\mathbf{f}_{%
\mathbf{x}}$, and $\mathbf{f}_{t}$ denote the partial derivatives of $%
\mathbf{f}$ with respect to the variables $\mathbf{x}$ and $t$,
respectively. Adopting this linear approximation of $\mathbf{f}$ at each
time step, the solution of (\ref{ODE-LLA-1})-(\ref{ODE-LLA-2}) can be
locally approximated on each interval $[t_{n},t_{n+1})$ by the solution of
the linear ODE%
\begin{align}
\frac{d\mathbf{y}\left( t\right) }{dt} & =\mathbf{A}_{n}\mathbf{y}(t)+%
\mathbf{a}_{n}\left( t\right) \text{, \ \ }t\in\lbrack t_{n},t_{n+1})
\label{ODE-LLA-8} \\
\mathbf{y}\left( t_{n}\right) & =\mathbf{y}_{n}\text{ , \ \ }
\label{ODE-LLA-8b}
\end{align}
where $\mathbf{A}_{n}=\mathbf{f}_{\mathbf{x}}(t_{n},\mathbf{y}_{n})$ is a
constant matrix, $\mathbf{a}_{n}(t)=\mathbf{f}_{t}\left( t_{n},\mathbf{y}%
_{n}\right) (t-t_{n})+\mathbf{f}\left( t_{n},\mathbf{y}_{n}\right) \mathbf{-A%
}_{n}\mathbf{y}_{n}$ is a linear vector function of $t$. According to the
variation of constants formula, such a solution is given by
\begin{equation}
\mathbf{y}(t)=e^{\mathbf{A}_{n}(t-t_{n})}(\mathbf{y}_{n}+\int%
\limits_{0}^{t-t_{n}}e^{-\mathbf{A}_{n}u}\mathbf{a}_{n}\left( t_{n}+u\right)
du).  \label{ODE-LLA-6}
\end{equation}
Furthermore, by using the identity%
\begin{equation}
\int\limits_{0}^{\Delta}e^{-\mathbf{A}_{n}u}du\text{ }\mathbf{A}_{n}=-(e^{-%
\mathbf{A}_{n}\Delta}-\mathbf{I}),\text{ \ \ \ \ }\Delta\geq0
\label{ODE-LLA-3}
\end{equation}
and simple rules from the integral calculus, the expression (\ref{ODE-LLA-6}%
) can be rewritten as
\begin{equation}
\mathbf{y}(t)=\mathbf{y}_{n}+\mathbf{\phi}(t_{n},\mathbf{y}_{n};t-t_{n}),
\label{ODE-LLA-9}
\end{equation}
where%
\begin{align}
\mathbf{\phi}(t_{n},\mathbf{y}_{n};t-t_{n}) & =\int\limits_{0}^{t-t_{n}}e^{%
\mathbf{A}_{n}(t-t_{n}-u)}(\mathbf{A}_{n}\mathbf{y}_{n}+\mathbf{a}_{n}\left(
t_{n}+u)\right) du  \notag \\
& =\int\limits_{0}^{t-t_{n}}e^{\mathbf{f}_{\mathbf{x}}\left( t_{n},\mathbf{y}%
_{n}\right) (t-t_{n}-u)}(\mathbf{f}\left( t_{n},\mathbf{y}_{n}\right) +%
\mathbf{f}_{t}\left( t_{n},\mathbf{y}_{n}\right) u)du.  \label{ODE-LLA-7}
\end{align}

In this way, by setting $\mathbf{y}_{0}=\mathbf{x}(t_{0})$ and iteratively
evaluating the expression (\ref{ODE-LLA-9}) at $t_{n+1}$ (for $n=0,1,\ldots
,N-1$) a sequence of points $\mathbf{y}_{n+1}$ can be obtained as an
approximation to the solution of the equation (\ref{ODE-LLA-1})-(\ref%
{ODE-LLA-2}). This is formalized in the following definition.

\begin{definition}
\label{definition LLD} (\cite{Jimenez02}, \cite{Jimenez05 AMC}) For a given
time discretization $\left( t\right) _{h}$, the Local Linear discretization
for the ODE (\ref{ODE-LLA-1})-(\ref{ODE-LLA-2}) is defined by the recursive
expression%
\begin{equation}
\mathbf{y}_{n+1}=\mathbf{y}_{n}+\mathbf{\phi}\left( t_{n},\mathbf{y}%
_{n};h_{n}\right) ,  \label{ODE-LLA-4}
\end{equation}
starting with $\mathbf{y}_{0}=\mathbf{x}_{0}$.
\end{definition}

The Local Linear discretization\ (\ref{ODE-LLA-4}) is, by construction,
A-stable. Furthermore, under quite general conditions, it does not have
spurious equilibrium points \cite{Jimenez02 AMC} and preserves the local
stability of the exact solution at hyperbolic equilibrium points and
periodic orbits \cite{Jimenez02 AMC}, \cite{McLachlan09}. On the basis of
the recursion (\ref{ODE-LLA-4}) (also known as Exponentially fitted Euler,
Euler Exponential or piece-wise linearized method) a variety of numerical
schemes for ODEs has been constructed (see a review in \cite{Jimenez05 AMC},
\cite{de la Cruz 07}). These numerical schemes essentially differ with
respect to the numerical algorithm used to compute (\ref{ODE-LLA-7}), and so
in the dynamical properties that they inherit from the LL discretization. A
major limitation of such schemes is their low order of convergence, namely
two.

\subsection{Local Linear - Runge Kutta discretizations}

A modification of the classical LL method can be done in order to improve
its order of convergence while retaining desirable dynamic properties. To do
so, note that the solution of the local linear ODE (\ref{ODE-LLA-8})-(\ref%
{ODE-LLA-8b}) is an approximation to the solution of the local nonlinear ODE
\begin{align*}
\frac{d\mathbf{z}\left( t\right) }{dt}& =\mathbf{f}\left( t,\mathbf{z}\left(
t\right) \right) \text{, \ \ }t\in \lbrack t_{n},t_{n+1}) \\
\mathbf{z}\left( t_{n}\right) & =\mathbf{y}_{n},\text{\ \ }
\end{align*}%
which can be rewritten as
\begin{align*}
\frac{d\mathbf{z}\left( t\right) }{dt}& =\mathbf{A}_{n}\mathbf{z}(t)+\mathbf{%
a}_{n}\left( t\right) +\mathbf{g}(t_{n},\mathbf{y}_{n};t,\mathbf{z}\left(
t\right) )\text{, \ \ }t\in \lbrack t_{n},t_{n+1}) \\
\mathbf{z}\left( t_{n}\right) & =\mathbf{y}_{n},\text{\ \ }
\end{align*}%
where $\mathbf{g}(t_{n},\mathbf{y}_{n};t,\mathbf{z}\left( t\right) )=\mathbf{%
f(}t,\mathbf{z}\left( t\right) )-\mathbf{A}_{n}\mathbf{z}(t)-\mathbf{a}%
_{n}\left( t\right) $, and $\mathbf{A}_{n}$, $\mathbf{a}_{n}(t)$ are defined
as in the previous subsection. From the variation of constants formula, the
solution $\mathbf{z}$ of this equation can be written as
\begin{equation*}
\mathbf{z}\left( t\right) =\mathbf{y}_{LL}\left( t;t_{n},\mathbf{y}%
_{n}\right) +\mathbf{r}\left( t;t_{n},\mathbf{y}_{n}\right) ,
\end{equation*}%
where
\begin{equation}
\mathbf{y}_{LL}(t;t_{n},\mathbf{y}_{n})=e^{\mathbf{A}_{n}(t-t_{n})}(\mathbf{y%
}_{n}+\int\limits_{0}^{t-t_{n}}e^{-\mathbf{A}_{n}u}\mathbf{a}_{n}\left(
t_{n}+u\right) du)  \label{ODE_HLLA-2b}
\end{equation}%
is solution of the linear equation (\ref{ODE-LLA-8})-(\ref{ODE-LLA-8b}) and
\begin{equation}
\mathbf{r}(t;t_{n},\mathbf{y}_{n})=\int\limits_{0}^{t-t_{n}}e^{\mathbf{f}_{%
\mathbf{x}}(t_{n},\mathbf{y}_{n})(t-t_{n}-u)}\mathbf{g}\left( t_{n},\mathbf{y%
}_{n};t_{n}+u,\mathbf{z}\left( t_{n}+u\right) \right) du  \label{ODE-HLLA-2}
\end{equation}%
is the remainder term of the LL approximation $\mathbf{y}_{LL}$ to $\mathbf{z%
}$. Consequently, if $\mathbf{r}_{\kappa }$ is an approximation to $\mathbf{r%
}$ of order $\kappa >2$, then $\mathbf{y}(t)=\mathbf{y}_{LL}\left( t;t_{n},%
\mathbf{y}_{n}\right) +\mathbf{r}_{\kappa }\left( t;t_{n},\mathbf{y}%
_{n}\right) $ should provide a better estimate to $\mathbf{z}(t)$ than the
LL approximation $\mathbf{y}(t)=\mathbf{y}_{LL}\left( t;t_{n},\mathbf{y}%
_{n}\right) $ for all $t\in \lbrack t_{n},t_{n+1})$. This motivates the
definition of the following high order local linear discretization.

\begin{definition}
\label{definition HLLD}\cite{Jimenez09} For a given time discretization $%
\left( t\right) _{h}$, an order $\gamma $ Local Linear discretization for
the ODE (\ref{ODE-LLA-1})-(\ref{ODE-LLA-2}) is defined by the recursive
expression%
\begin{equation}
\mathbf{y}_{n+1}=\mathbf{y}_{LL}\left( t_{n}+h_{n};t_{n},\mathbf{y}%
_{n}\right) +\mathbf{r}_{\kappa }\left( t_{n}+h_{n};t_{n},\mathbf{y}%
_{n}\right) ,  \label{ODE-HLLA-1}
\end{equation}%
starting with $\mathbf{y}_{0}=\mathbf{x}_{0}$, where $\mathbf{r}_{\kappa }$
is an approximation to the remainder term (\ref{ODE-HLLA-2}) such that $%
\left\Vert \mathbf{x}(t_{n})-\mathbf{y}_{n}\right\Vert =O(h^{\gamma })$ with
$\gamma >2$, for all $t_{n}\in \left( t\right) _{h}$.
\end{definition}

Depending on the way in which the remainder term $\mathbf{r}$ is
approximated, two classes of high order LL discretizations have been
proposed.\ In the first one, $\mathbf{g}$ is approximated by a polynomial.
For instance, by means of a truncated Taylor expansion \cite{de la Cruz 07}
or an Hermite interpolation polynomial \cite{Hochbruck-Ostermann11},
resulting in the so called Local Linearization - Taylor schemes and the
Linearized Exponential Adams schemes, respectivelly. The second one is based
on approximating $\mathbf{r}$ by means of a standard integrator that solves
an auxiliary ODE. This is called the Local Linearization-Runge Kutta (LLRK)
methods when a Runge-Kutta integrator is used for this purpose \cite{de la
Cruz 06}. A computational advantage of the latter class is that it does not
require calculation of high order derivatives of the vector field $\mathbf{f}
$.

Specifically, the LLRK methods are derived as follows. By taking derivatives
with respect to $t$ in (\ref{ODE-HLLA-2}), it is obtained that $\mathbf{r}%
\left( t;t_{n},\mathbf{y}_{n}\right) $ satisfies the differential equation%
\begin{align}
\frac{d\mathbf{u}\left( t\right) }{dt}& =\mathbf{q(}t_{n},\mathbf{y}_{n};t%
\mathbf{,\mathbf{u}}\left( t\right) \mathbf{),}\text{ \ \ }t\in \lbrack
t_{n},t_{n+1}),  \label{Diff. Equat for rn} \\
\mathbf{u}\left( t_{n}\right) & =\mathbf{0},
\label{Initial Cond. Diff. Equat for rn}
\end{align}%
with vector field
\begin{equation*}
\mathbf{q(}t_{n},\mathbf{y}_{n};s\mathbf{,\xi )}=\mathbf{\mathbf{f}_{\mathbf{%
x}}}(t_{n},\mathbf{y}_{n})\mathbf{\xi }+\mathbf{g}\left( t_{n},\mathbf{y}%
_{n};s,\mathbf{y}_{n}+\mathbf{\phi }\left( t_{n},\mathbf{y}%
_{n};s-t_{n}\right) +\mathbf{\xi }\right) ,
\end{equation*}%
which can be also written as
\begin{align*}
\mathbf{q(}t_{n},\mathbf{y}_{n};s\mathbf{,\xi )}=\mathbf{f(}& s,\mathbf{y}%
_{n}+\mathbf{\phi }\left( t_{n},\mathbf{y}_{n};s-t_{n}\right) +\mathbf{\xi }%
)-\mathbf{f}_{\mathbf{x}}(t_{n},\mathbf{y}_{n})\mathbf{\phi }\left( t_{n},%
\mathbf{y}_{n};s-t_{n}\right) \\
& -\mathbf{f}_{t}\left( t_{n},\mathbf{y}_{n}\right) (s-t_{n})-\mathbf{f}%
\left( t_{n},\mathbf{y}_{n}\right) ,
\end{align*}%
where $\mathbf{\phi }$ is the vector function (\ref{ODE-LLA-7}) that defines
the LL discretization (\ref{ODE-LLA-4}). Thus, an approximation $\mathbf{r}%
_{\kappa }$ to $\mathbf{r}$ can be obtained by solving the ODE (\ref{Diff.
Equat for rn})-(\ref{Initial Cond. Diff. Equat for rn}) through any
conventional numerical integrator. Namely, if $\mathbf{u}_{n+1}=\mathbf{u}%
_{n}+\mathbf{\Lambda }^{\mathbf{y}_{n}}\left( t_{n},\mathbf{u}%
_{n};h_{n}\right) $ is some one-step numerical scheme for this equation,
then $\mathbf{r}_{\kappa }\left( t_{n}+h_{n};t_{n},\mathbf{y}_{n}\right) =%
\mathbf{\Lambda }^{\mathbf{y}_{n}}\left( t_{n},\mathbf{0};h_{n}\right) $.

In particular, we will focus on the approximation $\mathbf{r}_{\kappa}$
obtained by means of an explicit RK scheme of order $\kappa$. Consider an
s-stage explicit RK scheme with coefficients $\mathbf{c}=\left[ c_{i}\right]
$, \ $\mathbf{A}=\left[ a_{ij}\right] $, \ $\mathbf{b}=\left[ b_{j}\right] $
applied to the equation (\ref{Diff. Equat for rn})-(\ref{Initial Cond. Diff.
Equat for rn}), i.e., the approximation defined by the map
\begin{equation}
\mathbf{\rho}\left( t_{n},\mathbf{y}_{n};h_{n}\right)
=h_{n}\sum_{j=1}^{s}b_{j}\mathbf{k}_{j},  \label{RK}
\end{equation}
where%
\begin{equation*}
\mathbf{k}_{i}=\mathbf{q}(t_{n},\mathbf{y}_{n};\text{ }t_{n}+c_{i}h_{n}%
\mathbf{,}\text{ }h_{n}\sum_{j=1}^{i-1}a_{ij}\mathbf{k}_{j}).
\end{equation*}
This suggests the following definition.

\begin{definition}
(\cite{de la Cruz 07}) An order $\gamma$ \textit{Local Linear-Runge Kutta
(LLRK) discretization }is an order $\gamma$ Local Linear discretization of
the form (\ref{ODE-HLLA-1}), where the approximation $\mathbf{r}_{\kappa}$
to the remainder term (\ref{ODE-HLLA-2}) is defined by the \textit{Runge
Kutta} formula (\ref{RK}).
\end{definition}

\section{Convergence and linear stability}

In order to study the rate of convergence of the LLRK discretizations, three
useful lemmas will be stated first.

\begin{lemma}
\label{LLRK local error gen}Let $\mathbf{u}_{n+1}=\mathbf{u}_{n}+{\Lambda }^{%
\mathbf{y}_{n}}\left( t_{n},\mathbf{u}_{n};h_{n}\right) $ be an approximate
solution of the auxiliary equation (\ref{Diff. Equat for rn})-(\ref{Initial
Cond. Diff. Equat for rn}) at $t=t_{n+1}\in \left( t\right) _{h}$ given by
an order $\gamma $ numerical integrator, and $\mathbf{y}_{n+1}$ the
discretization
\begin{equation*}
\mathbf{y}_{n+1}=\mathbf{y}_{n}+h_{n}\mathbf{\digamma }(t_{n},\mathbf{y}%
_{n};h_{n}),
\end{equation*}%
where
\begin{equation*}
\mathbf{\digamma }(s,\mathbf{\xi };h)=\frac{1}{h}\left\{ \mathbf{\phi }(s,%
\mathbf{\xi };h)+{\Lambda }^{\mathbf{\xi }}(s,\mathbf{0};h)\right\}
\end{equation*}%
with $\mathbf{y}_{0}=\mathbf{x}_{0}$. Then the local truncation error $%
L_{n+1}$ satisfies
\begin{equation*}
L_{n+1}=\left\Vert \mathbf{x}(t_{n+1};\mathbf{x}_{0})-\mathbf{x}(t_{n};%
\mathbf{x}_{0})-h_{n}\mathbf{\digamma }(t_{n},\mathbf{x}(t_{n};\mathbf{x}%
_{0});h_{n})\right\Vert \leq C_{1}(\mathbf{x}_{0})h_{n}^{\gamma +1}
\end{equation*}%
for all $t_{n},t_{n+1}\in \left( t\right) _{h}$. Moreover, if $\mathbf{%
\digamma }$ satisfies the local Lipschitz condition%
\begin{equation}
\left\Vert \mathbf{\digamma (}s,\mathbf{\xi }_{2};h)-\mathbf{\digamma }(s,%
\mathbf{\xi }_{1};h)\right\Vert \leq B_{\epsilon }\text{ }\left\Vert \mathbf{%
\xi }_{2}-\mathbf{\xi }_{1}\right\Vert \text{, \ \ with }B_{\epsilon }>0%
\text{ and }\mathbf{\xi }_{1},\mathbf{\xi }_{2}\in \epsilon (\mathbf{\xi }%
)\subset \mathcal{D},  \label{Lipschitz}
\end{equation}%
where $\epsilon (\mathbf{\xi })$ is a neighborhood of $\mathbf{\xi }$ for
each $\mathbf{\xi }$ $\subset \mathcal{D}$, then for $h$ small enough there
exists a positive constant $C_{2}(\mathbf{x}_{0})$ depending only on $%
\mathbf{x}_{0}$ such that
\begin{equation*}
\left\Vert \mathbf{x}(t_{n+1};\mathbf{x}_{0})-\mathbf{y}_{n+1}\right\Vert
\leq C_{2}(\mathbf{x}_{0})h^{\gamma }
\end{equation*}%
for all $t_{n+1}\in \left( t\right) _{h}$.
\end{lemma}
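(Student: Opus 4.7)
The plan is to exploit the special structure of $\mathbf{\digamma}$ to recognise the scheme as a concrete instance of Definition~\ref{definition HLLD}, and then run the standard local-to-global convergence argument. First I would rewrite the update as $\mathbf{y}_{n+1} = \mathbf{y}_n + \mathbf{\phi}(t_n,\mathbf{y}_n;h_n) + {\Lambda}^{\mathbf{y}_n}(t_n,\mathbf{0};h_n)$ and use the identity (\ref{ODE-LLA-9}) together with the formula (\ref{ODE_HLLA-2b}) to recognise $\mathbf{y}_n + \mathbf{\phi}(t_n,\mathbf{y}_n;h_n) = \mathbf{y}_{LL}(t_{n+1};t_n,\mathbf{y}_n)$. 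Hence $\mathbf{y}_{n+1} = \mathbf{y}_{LL}(t_{n+1};t_n,\mathbf{y}_n) + {\Lambda}^{\mathbf{y}_n}(t_n,\mathbf{0};h_n)$, so ${\Lambda}^{\mathbf{y}_n}(t_n,\mathbf{0};h_n)$ plays the role of the remainder approximation $\mathbf{r}_\kappa$ in the HLLD recursion (\ref{ODE-HLLA-1}).

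For the local truncation error I would substitute $\mathbf{y}_n = \mathbf{x}(t_n;\mathbf{x}_0)$. By uniqueness, the solution of the local nonlinear IVP with that initial datum coincides with $\mathbf{x}(\cdot;\mathbf{x}_0)$ on $[t_n,t_{n+1}]$, so the variation-of-constants decomposition preceding Definition~\ref{definition HLLD} gives $\mathbf{x}(t_{n+1};\mathbf{x}_0) = \mathbf{y}_{LL}(t_{n+1};t_n,\mathbf{x}(t_n;\mathbf{x}_0)) + \mathbf{r}(t_{n+1};t_n,\mathbf{x}(t_n;\mathbf{x}_0))$, with $\mathbf{r}$ as in (\ref{ODE-HLLA-2}). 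Subtracting the rewritten update yields $L_{n+1} = \|\mathbf{r}(t_{n+1};t_n,\mathbf{x}(t_n;\mathbf{x}_0)) - {\Lambda}^{\mathbf{x}(t_n;\mathbf{x}_0)}(t_n,\mathbf{0};h_n)\|$, which is exactly the one-step error of the order-$\gamma$ integrator ${\Lambda}$ applied to the smooth auxiliary equation (\ref{Diff. Equat for rn})-(\ref{Initial Cond. Diff. Equat for rn}). Its standard single-step bound is $O(h_n^{\gamma+1})$; because the implicit constant depends only on bounds of derivatives of $\mathbf{q}$ along the compact arc $\{\mathbf{x}(t;\mathbf{x}_0):t\in[t_0,T]\}$, it can be chosen as a function $C_1(\mathbf{x}_0)$ that is uniform in $n$.

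For the second claim I would set $e_n = \mathbf{x}(t_n;\mathbf{x}_0) - \mathbf{y}_n$ and perform the usual decomposition $e_{n+1} = e_n + [\mathbf{x}(t_{n+1};\mathbf{x}_0) - \mathbf{x}(t_n;\mathbf{x}_0) - h_n\mathbf{\digamma}(t_n,\mathbf{x}(t_n;\mathbf{x}_0);h_n)] + h_n[\mathbf{\digamma}(t_n,\mathbf{x}(t_n;\mathbf{x}_0);h_n) - \mathbf{\digamma}(t_n,\mathbf{y}_n;h_n)]$. Bounding the middle bracket by the first part of the lemma and the last by the Lipschitz hypothesis (\ref{Lipschitz}) produces $\|e_{n+1}\| \leq (1 + Bh_n)\|e_n\| + C_1(\mathbf{x}_0)h_n^{\gamma+1}$, and the discrete Gr\"onwall inequality converts this into $\|e_n\| \leq C_2(\mathbf{x}_0)h^\gamma$ with $C_2(\mathbf{x}_0) = C_1(\mathbf{x}_0)(e^{B(T-t_0)}-1)/B$ on the finite horizon $[t_0,T]$. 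The main obstacle I anticipate is the locality of (\ref{Lipschitz}): $B_\epsilon$ is valid only while $\mathbf{y}_n$ remains in the prescribed neighborhood of the exact trajectory. I would resolve this by a standard bootstrapping argument, fixing a compact tubular neighborhood of $\{\mathbf{x}(t;\mathbf{x}_0):t\in[t_0,T]\}$ on which a uniform Lipschitz constant $B$ is available, and then inducting on $n$: for $h$ sufficiently small the running bound $C_2(\mathbf{x}_0)h^\gamma$ keeps each $\mathbf{y}_{n+1}$ inside that tube, so each invocation of the Lipschitz estimate is legitimate. This is precisely what the qualifier ``for $h$ small enough'' in the statement encodes.
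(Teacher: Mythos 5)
Your proposal is correct and follows essentially the same route as the paper: the local truncation error is identified with the one-step error of the order-$\gamma$ integrator $\Lambda$ for the auxiliary equation (\ref{Diff. Equat for rn})--(\ref{Initial Cond. Diff. Equat for rn}) via the variation-of-constants decomposition, and the global bound is obtained by upgrading the local Lipschitz condition to a uniform one on a compact tube around the exact trajectory (the paper cites Lemma 2 of \cite{Perko01} for this) and then running the standard local-to-global argument, with $h$ taken small enough to keep the iterates inside the tube. The only cosmetic difference is that you unroll the discrete Gr\"onwall recursion explicitly where the paper simply invokes Theorem 3.6 of \cite{Hairer-Wanner93}.
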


\begin{proof}
Taking into account that
\begin{equation*}
\mathbf{x}(t_{n+1};\mathbf{x}_{0})=\mathbf{y}_{LL}\left( t_{n}+h_{n};t_{n},%
\mathbf{x}(t_{n};\mathbf{x}_{0})\right) +\mathbf{r}\left( t_{n}+h_{n};t_{n},%
\mathbf{x}(t_{n};\mathbf{x}_{0})\right) ,
\end{equation*}%
where $\mathbf{y}_{LL}$ and $\mathbf{r}$ are defined as in (\ref{ODE_HLLA-2b}%
) and (\ref{ODE-HLLA-2}), respectively, it is obtained that
\begin{equation*}
L_{n+1}=\left\Vert \mathbf{r}\left( t_{n}+h_{n};t_{n},\mathbf{x}(t_{n};%
\mathbf{x}_{0})\right) -{\Lambda }^{\mathbf{x}(t_{n};\mathbf{x}_{0})}\mathbf{%
(}t_{n},\mathbf{0};h_{n})\right\Vert ,
\end{equation*}%
where $L_{n+1}$ denotes the local truncation error of the discretization
under consideration. Since $\mathbf{r}\left( t_{n}+h_{n};t_{n},\mathbf{x}%
(t_{n};\mathbf{x}_{0})\right) $ is the exact solution of the equation (\ref%
{Diff. Equat for rn})-(\ref{Initial Cond. Diff. Equat for rn}) with $\mathbf{%
y}_{n}=\mathbf{x}(t_{n};\mathbf{x}_{0})$ at $t_{n+1}$ and $\mathbf{u}_{n+1}=%
\mathbf{u}_{n}+{\Lambda }^{\mathbf{x}(t_{n};\mathbf{x}_{0})}\left( t_{n},%
\mathbf{u}_{n};h_{n}\right) $ is the approximate solution of that equation
at $t_{n+1}$ given by an order $\gamma $ numerical integrator, there exists
a positive constant $C_{1}(\mathbf{x}_{0})$ such that
\begin{equation*}
\left\Vert \mathbf{r}\left( t_{n}+h_{n};t_{n},\mathbf{x}(t_{n};\mathbf{x}%
_{0})\right) -{\Lambda }^{\mathbf{x}(t_{n};\mathbf{x}_{0})}\mathbf{(}t_{n},%
\mathbf{0};h_{n})\right\Vert \leq C_{1}(\mathbf{x}_{0})h_{n}^{\gamma +1},
\end{equation*}%
which provides the stated bound for $L_{n+1}$.

On the other hand, since the compact set $\mathcal{X}=\left\{ \mathbf{x}%
\left( t;\mathbf{x}_{0}\right) :t\in \left[ t_{0},T\right] \right\} $ is
contained in the open set $\mathcal{D}\subset \mathbb{R}^{d}$, there exists $%
\varepsilon >0$ such that the compact set
\begin{equation*}
\mathcal{A}_{\varepsilon }=\left\{ \xi \in \mathbb{R}^{d}:\underset{\mathbf{x%
}\left( t;\mathbf{x}_{0}\right) \in \mathcal{X}}{\min }\left\Vert \xi -%
\mathbf{x}\left( t;\mathbf{x}_{0}\right) \right\Vert \leq \varepsilon
\right\}
\end{equation*}%
is contained in $\mathcal{D}$. Since $\mathbf{\digamma }$ satisfies the
local Lipschitz condition (\ref{Lipschitz}), Lemma 2 in \cite{Perko01} (pp.
92) implies the existence of a positive constant $L$ such that%
\begin{equation}
\left\Vert \mathbf{\digamma (}s,\mathbf{\xi }_{2};h)-\mathbf{\digamma }(s,%
\mathbf{\xi }_{1};h)\right\Vert \leq L\text{ }\left\Vert \mathbf{\xi }_{2}-%
\mathbf{\xi }_{1}\right\Vert  \label{Global Lipschitz}
\end{equation}%
for all $\mathbf{\xi }_{1},\mathbf{\xi }_{2}\in \mathcal{A}_{\varepsilon }$.
Hence, the stated estimate $\left\Vert \mathbf{x}(t_{n+1};%
\mathbf{x}_{0})-\mathbf{y}_{n+1}\right\Vert \leq C_{2}(\mathbf{x}%
_{0})h^{\gamma }$ for the global error straightforwardly follows
from the Lipschitz condition (\ref{Global Lipschitz}) and Theorem
3.6 in \cite{Hairer-Wanner93}, where $C_{2}(\mathbf{x}_{0})$ is a
positive contant. Finally, in order to guarantee that
$\mathbf{y}_{n+1}\in \mathcal{A}_{\varepsilon }$ for all
$n=0,...,N-1,$ and so that the LLRK
discretization is well-defined, it is sufficient that $0<h<\delta $, where $%
\delta $ is chosen in such a way that $C_{2}(\mathbf{x}_{0})\delta ^{\gamma
}\leq \varepsilon $.
\end{proof}

Note that this lemma requires of an order $\gamma$ numerical integrator for
the auxiliary equation (\ref{Diff. Equat for rn})-(\ref{Initial Cond. Diff.
Equat for rn}). For this, certain conditions on the vector field $\mathbf{q}$
of this equation have to be assumed (usually, Lipschitz and smoothness
conditions). The next two lemmas show that the function $\mathbf{\phi}$%
\textbf{,} and so the vector field $\mathbf{q}$\textbf{,} inherits such
conditions from the vector field $\mathbf{f}$.

\begin{lemma}
\label{Lemma de phi LLT} Let $\mathbf{\varphi}(.;h)=\frac{1}{h}\mathbf{\phi }%
\left( .;h\right) $. Suppose that
\begin{equation*}
\mathbf{f\in}\text{ }\mathcal{C}^{p+1,q+1}\left( [t_{0},T]\times \mathcal{D},%
\mathbb{R}^{d}\right) ,
\end{equation*}
where $p,q\in\mathbb{N}$. Then $\mathbf{\varphi}\in\mathcal{C}%
^{p,q,r}([t_{0},T]\times\mathcal{D}\times\mathbb{R}_{+},\mathbb{R}^{d})$ for
all $r\in\mathbb{N}$.
\end{lemma}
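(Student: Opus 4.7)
The plan is to rescale the integration variable in~(\ref{ODE-LLA-7}) so that the interval becomes independent of $h$, and then apply the Leibniz rule for differentiation under the integral sign. Setting $u=h\sigma$ in the integral defining $\mathbf{\phi}$ gives
\begin{equation*}
\mathbf{\varphi}(t,\mathbf{y};h)=\int_{0}^{1} e^{h(1-\sigma)\,\mathbf{f}_{\mathbf{x}}(t,\mathbf{y})}\bigl(\mathbf{f}(t,\mathbf{y})+h\sigma\,\mathbf{f}_{t}(t,\mathbf{y})\bigr)\,d\sigma,
\end{equation*}
a fixed-interval integral whose integrand can be analyzed factor by factor. This reformulation also makes clear that the singular-looking factor $1/h$ has been absorbed, and that $\mathbf{\varphi}$ extends smoothly across $h=0$.

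Next, I would record the regularity of each ingredient. Since $\mathbf{f}\in\mathcal{C}^{p+1,q+1}([t_0,T]\times\mathcal{D},\mathbb{R}^d)$, the partial derivatives satisfy $\mathbf{f}_{\mathbf{x}}\in\mathcal{C}^{p+1,q}$ and $\mathbf{f}_{t}\in\mathcal{C}^{p,q+1}$; in particular, both are of class at least $\mathcal{C}^{p,q}$ in $(t,\mathbf{y})$. The matrix exponential $M\mapsto e^{M}$ is entire on $\mathbb{R}^{d\times d}$, hence $(h,\sigma,M)\mapsto e^{h(1-\sigma)M}$ is jointly $\mathcal{C}^{\infty}$. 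Composing $(t,\mathbf{y})\mapsto\mathbf{f}_{\mathbf{x}}(t,\mathbf{y})$ with this exponential, and multiplying by $\mathbf{f}+h\sigma\,\mathbf{f}_{t}$, yields an integrand $F(t,\mathbf{y},h,\sigma)$ that is $\mathcal{C}^{p,q}$ in $(t,\mathbf{y})$ and $\mathcal{C}^{\infty}$ in $(h,\sigma)$, with all such mixed partials jointly continuous in $(t,\mathbf{y},h,\sigma)$.

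Finally, I would invoke the standard differentiation-under-the-integral theorem on the compact interval $[0,1]$. Any mixed partial $\partial_{t}^{i}\partial_{\mathbf{y}}^{j}\partial_{h}^{k} F$ with $i\le p$, $j\le q$, $k\le r$ is uniformly bounded in $\sigma$, locally in $(t,\mathbf{y},h)$, so differentiation and integration commute, yielding $\mathbf{\varphi}\in\mathcal{C}^{p,q,r}([t_{0},T]\times\mathcal{D}\times\mathbb{R}_{+},\mathbb{R}^{d})$ for every $r\in\mathbb{N}$.

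The only real obstacle is the bookkeeping of differentiability orders: one has to verify that forming $\mathbf{f}_{\mathbf{x}}$ and $\mathbf{f}_{t}$ from $\mathbf{f}$ does not consume more smoothness than the hypothesis provides, and that the matrix-exponential factor imposes no extra restriction on regularity in $(t,\mathbf{y})$ (which it does not, being entire in its matrix argument). Once the substitution $u=h\sigma$ is in place, the remainder is a routine application of the Leibniz rule.
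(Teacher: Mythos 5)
Your proof is correct and follows essentially the same route as the paper: the substitution $u=h\sigma$ that absorbs the factor $1/h$ is precisely the integral representation $\vartheta_j(\delta\mathbf{M})=\frac{1}{(j-1)!}\int_0^1 e^{\delta(1-\sigma)\mathbf{M}}\sigma^{j-1}\,d\sigma$ that the paper uses to write $\mathbf{\varphi}=\vartheta_1(\delta\mathbf{f}_{\mathbf{x}})\mathbf{f}+\delta\,\vartheta_2(\delta\mathbf{f}_{\mathbf{x}})\mathbf{f}_t$, after which both arguments reduce to the smoothness of a composition of an entire function with $\mathbf{f}_{\mathbf{x}}$, $\mathbf{f}_{t}$ and $\mathbf{f}$. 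Your derivative bookkeeping ($\mathbf{f}_{\mathbf{x}}\in\mathcal{C}^{p+1,q}$, $\mathbf{f}_{t}\in\mathcal{C}^{p,q+1}$, both at least $\mathcal{C}^{p,q}$) is exactly what the hypothesis supplies, and the Leibniz-rule step over the compact interval $[0,1]$ is the only additional ingredient you need beyond what the paper states.
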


\begin{proof}
Let $\vartheta _{j}$ be the analytical function recursively defined by
\begin{equation*}
\vartheta _{j+1}\left( z\right) =\left\{
\begin{array}{c}
\left( \vartheta _{j}\left( z\right) -1/j!\right) /z \\
e^{z}%
\end{array}%
\begin{array}{c}
\text{ \ \ \ for }j=1,2,\ldots \\
j=0%
\end{array}%
\right\} \text{ \ }
\end{equation*}%
for $z\in \mathbb{C}$. Since
\begin{equation*}
\vartheta _{j}\left( s\mathbf{M}\right) =\frac{1}{\left( j-1\right) !s^{j}}%
\int_{0}^{s}e^{\left( s-u\right) \mathbf{M}}u^{j-1}du,
\end{equation*}%
for all $s\in \mathbb{R}_{+}$ and $\mathbf{M}\in \mathbb{R}^{d}\times
\mathbb{R}^{d}$ (see for instance \cite{Sidje 1998}), the function $\mathbf{%
\varphi }$ can be written as
\begin{equation*}
\mathbf{\varphi }\left( \tau ,\mathbf{\xi };\delta \right) =\vartheta
_{1}\left( \delta \mathbf{f}_{\mathbf{x}}\left( \tau ,\mathbf{\xi }\right)
\right) \mathbf{f}\left( \tau ,\mathbf{\xi }\right) +\vartheta _{2}\left(
\delta \mathbf{f}_{\mathbf{x}}\left( \tau ,\mathbf{\xi }\right) \right)
\mathbf{f}_{t}\left( \tau ,\mathbf{\xi }\right) \delta
\end{equation*}%
for all $\tau \in \mathbb{R}$, $\mathbf{\xi }\in \mathbb{R}^{d}$ and $\delta
\geq 0$. Thus, from the analyticity of $\vartheta _{j}$ and the continuity
of $\mathbf{f}$ the proof is completed.
\end{proof}

\begin{lemma}
\label{Lemma for Local Error LLRK} Let $\mathbf{f}$ and $\mathbf{q}$ be the
vector fields of the ODEs (\ref{ODE-LLA-1})-(\ref{ODE-LLA-2}) and (\ref%
{Diff. Equat for rn})-(\ref{Initial Cond. Diff. Equat for rn}), respectively.
\end{lemma}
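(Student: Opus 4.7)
The plan is to read off the smoothness and local Lipschitz properties of $\mathbf{q}$ directly from the closed-form expression given in the paper, namely
\begin{equation*}
\mathbf{q}(t_{n},\mathbf{y}_{n};s,\mathbf{\xi })=\mathbf{f}(s,\mathbf{y}_{n}+\mathbf{\phi }(t_{n},\mathbf{y}_{n};s-t_{n})+\mathbf{\xi })-\mathbf{f}_{\mathbf{x}}(t_{n},\mathbf{y}_{n})\mathbf{\phi }(t_{n},\mathbf{y}_{n};s-t_{n})-\mathbf{f}_{t}(t_{n},\mathbf{y}_{n})(s-t_{n})-\mathbf{f}(t_{n},\mathbf{y}_{n}).
\end{equation*}
Since $\mathbf{q}$ is obtained by sums, products and compositions of $\mathbf{f}$, its first partials $\mathbf{f}_{\mathbf{x}}$, $\mathbf{f}_{t}$, and of $\mathbf{\phi }$, the claim should follow from elementary chain-rule bookkeeping combined with Lemma~\ref{Lemma de phi LLT}, while the local Lipschitz estimate in $\mathbf{\xi }$ will follow from the mean value inequality applied on a compact subset of $\mathcal{D}$.

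First I would observe that Lemma~\ref{Lemma de phi LLT} gives $\mathbf{\varphi }=\mathbf{\phi }/h\in \mathcal{C}^{p,q,r}$ for every $r\in \mathbb{N}$, whence $\mathbf{\phi }(\tau ,\mathbf{\xi };\delta )=\delta \,\mathbf{\varphi }(\tau ,\mathbf{\xi };\delta )$ is of the same joint regularity and satisfies $\mathbf{\phi }(\tau ,\mathbf{\xi };0)=\mathbf{0}$. Under $\mathbf{f}\in \mathcal{C}^{p+1,q+1}$ the partials $\mathbf{f}_{\mathbf{x}}$ and $\mathbf{f}_{t}$ are themselves of class $\mathcal{C}^{p,q}$. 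Treating $(t_{n},\mathbf{y}_{n})$ as parameters, I would then apply the standard composition theorem for jointly smooth functions to the map $(s,\mathbf{\xi })\mapsto \mathbf{f}(s,\mathbf{y}_{n}+\mathbf{\phi }(t_{n},\mathbf{y}_{n};s-t_{n})+\mathbf{\xi })$; the remaining three terms in the expression are manifestly of the required class, and summing yields joint regularity of $\mathbf{q}$ in all four grouped arguments $(t_{n},\mathbf{y}_{n},s,\mathbf{\xi })$.

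For the Lipschitz conclusion in $\mathbf{\xi }$, which is precisely what is needed in order to verify hypothesis (\ref{Lipschitz}) of Lemma~\ref{LLRK local error gen} for $\mathbf{\digamma }$, I would differentiate once in $\mathbf{\xi }$ to get $\mathbf{q}_{\mathbf{\xi }}(t_{n},\mathbf{y}_{n};s,\mathbf{\xi })=\mathbf{f}_{\mathbf{x}}(s,\mathbf{y}_{n}+\mathbf{\phi }(t_{n},\mathbf{y}_{n};s-t_{n})+\mathbf{\xi })-\mathbf{f}_{\mathbf{x}}(t_{n},\mathbf{y}_{n})$, and then use the boundedness of $\mathbf{f}_{\mathbf{x}}$ on the compact tube $\mathcal{A}_{\varepsilon }\subset \mathcal{D}$ constructed in the proof of Lemma~\ref{LLRK local error gen} together with the mean value inequality to extract a Lipschitz constant in $\mathbf{\xi }$ that is uniform in $s\in \lbrack t_{n},t_{n+1})$ for $h$ sufficiently small. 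The same kind of estimate handles Lipschitz continuity in $(t_{n},\mathbf{y}_{n})$ if it is also claimed, using in addition that $\mathbf{\phi }$ is Lipschitz in its base point by Lemma~\ref{Lemma de phi LLT}.

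The main obstacle I anticipate is not the algebra but the bookkeeping of exactly which joint regularity class $\mathbf{q}$ inherits as a function of its four grouped arguments, because $\mathbf{f}_{\mathbf{x}}(t_{n},\mathbf{y}_{n})$ is multiplied into $\mathbf{\phi }$ and has one derivative less than $\mathbf{f}$ itself in the base-point variables; thus if the target class of $\mathbf{q}$ is to be symmetric across both pairs of variables, the final exponents depend on how one accounts for that lost derivative, which is precisely why the hypothesis on $\mathbf{f}$ in Lemma~\ref{Lemma de phi LLT} is stated with $p+1,q+1$ rather than $p,q$. Once the correct joint exponents are identified, the proof reduces to invoking Lemma~\ref{Lemma de phi LLT} and the chain rule.
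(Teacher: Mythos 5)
Your plan is correct and follows essentially the same route as the paper: part \textit{i)} from compactness of the solution trajectory plus the continuity of $\mathbf{\phi}$ (with $\mathbf{\phi}(\cdot;0)=\mathbf{0}$) supplied by Lemma~\ref{Lemma de phi LLT}, part \textit{ii)} from boundedness of $\mathbf{f}_{\mathbf{x}}$ on the compact tube (the paper cites Perko's Lemma~2 to pass from local to uniform Lipschitz, where you use the mean value inequality — the same content), and part \textit{iii)} by reading the regularity of $\mathbf{q}$ off its closed form. One harmless slip: since the term $-\mathbf{f}_{\mathbf{x}}(t_{n},\mathbf{y}_{n})\mathbf{\phi}(t_{n},\mathbf{y}_{n};s-t_{n})$ does not depend on $\mathbf{\xi}$, the correct derivative is $\partial\mathbf{q}/\partial\mathbf{\xi}=\mathbf{f}_{\mathbf{x}}(s,\mathbf{y}_{n}+\mathbf{\phi}+\mathbf{\xi})$ without the extra $-\mathbf{f}_{\mathbf{x}}(t_{n},\mathbf{y}_{n})$; the Lipschitz bound you extract is unaffected.
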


\begin{enumerate}
\item[\textit{i)}] There exists $\varepsilon >0$ such that the compact set
\begin{equation*}
\mathcal{A}_{\varepsilon }=\left\{ \mathbf{z}\in \mathbb{R}^{d}:\underset{%
t\in \lbrack t_{0},T]}{\min }\left\Vert \mathbf{x}\left( t\right) -\mathbf{z}%
\right\Vert \leq \varepsilon \right\}
\end{equation*}%
is contained in $\mathcal{D}$. Moreover, there exists a compact set $%
\mathcal{K}_{\varepsilon }$ included into an open neiborhood of $\mathbf{0}$
and a $\delta _{\varepsilon }>0$, such that
\begin{equation*}
\mathbf{x}(t)+\mathbf{\phi }\left( t,\mathbf{x}(t);\delta \right) +\mathbf{%
\xi }\in \mathcal{A}_{\varepsilon },
\end{equation*}%
for all $\delta \in \lbrack 0,\delta _{\varepsilon }]$, $\mathbf{\xi }\in
\mathcal{K}_{\varepsilon }$ and $t\in \lbrack t_{0},T]$.

\item[\textit{ii)}] If $\mathbf{f}$ and its first partial derivatives are
bounded on $[t_{0},T]\times \mathcal{D}$, and $\mathbf{f}(t,.)$ is a locally
Lipschitz function on $\mathcal{D}$ with Lipschitz constant independent of $%
t $, then there exists a positive constant $P$ such that%
\begin{equation*}
\left\Vert \mathbf{q(}t,\mathbf{x}(t);t+\delta ,\mathbf{\xi }_{2}\mathbf{)-q(%
}t,\mathbf{x}(t);t+\delta ,\mathbf{\xi }_{1}\mathbf{)}\right\Vert \leq
P\left\Vert \mathbf{\xi }_{2}-\mathbf{\xi }_{1}\right\Vert
\end{equation*}%
for all $\delta \in \lbrack 0,\delta _{\varepsilon }]$, $\mathbf{\xi }_{1},%
\mathbf{\xi }_{2}\in \mathcal{K}_{\varepsilon }$ and $t\in \lbrack t_{0},T]$.

\item[\textit{iii)}] If $\mathbf{f}\in \mathcal{C}^{p}\left( [t_{0},T]\times
\mathcal{D},\mathbb{R}^{d}\right) $ for some $p\in \mathbb{N}$, then $%
\mathbf{q(}t,\mathbf{x}(t);\cdot \mathbf{)}\in \mathcal{C}^{p}([t,t+\delta
_{\varepsilon }]\times \mathcal{K}_{\varepsilon },\mathbb{R}^{d})$ for all $%
t\in \lbrack t_{0},T]$.
\end{enumerate}

\begin{proof}
The first part of assertion \textit{i)} follows from the fact that $\mathcal{%
X}=\left\{ \mathbf{x}\left( t\right) :t\in \left[ t_{0},T\right] \right\} $
is a compact set contained into the open set $\mathcal{D}$, whereas its
second part results from the continuity of $\mathbf{\phi }$ on $%
[t_{0},T]\times \mathcal{A}_{\varepsilon }\times \lbrack 0,\delta
_{\varepsilon }]$ stated by the Lemma \ref{Lemma de phi LLT}. Assertion
\textit{ii)} is a straighforward consecuence of Lemma 2 in \cite{Perko01}
(pp. 92). Assertion \textit{iii)} follows from the definition of the vector
field $\mathbf{q}$ and Lemma \ref{Lemma de phi LLT}.
\end{proof}

The next theorem characterizes the convergence rate of LLRK discretizations.
For this purpose, for all $t_{n}\in\left( t\right) _{h}$, denote by
\begin{equation}
\mathbf{y}_{n+1}=\mathbf{y}_{n}+h_{n}\mathbf{\varphi}_{\gamma}(t_{n},\mathbf{%
y}_{n};h_{n})  \label{LLRK_Discretizat}
\end{equation}
the LL discretization defined in (\ref{ODE-HLLA-1}), taking $\mathbf{r}%
_{\kappa}$ as an order $\gamma$ RK scheme of the form (\ref{RK}). That is,
\begin{equation*}
\mathbf{\varphi}_{\gamma}(t_{n},\mathbf{y}_{n};h_{n})=\frac{1}{h_{n}}\left\{
\mathbf{\phi}\left( t_{n},\mathbf{y}_{n};h_{n}\right) +\mathbf{\rho}\left(
t_{n},\mathbf{y}_{n};h_{n}\right) \right\} ,
\end{equation*}
where $\mathbf{\phi}$ is defined by (\ref{ODE-LLA-7}).

\begin{theorem}
\label{Local Error LLRK}Suppose that%
\begin{equation}
\mathbf{f}\in \mathcal{C}^{\gamma +1}([t_{0},T]\times \mathcal{D},\mathbb{R}%
^{d}).  \label{ODE-CONV-8}
\end{equation}%
Then
\begin{equation*}
\left\Vert \mathbf{x}(t_{n}+h;\mathbf{x}_{0})-\mathbf{x}(t_{n};\mathbf{x}%
_{0})-h\mathbf{\varphi }_{\gamma }(t_{n},\mathbf{x}(t_{n};\mathbf{x}%
_{0});h)\right\Vert \leq C_{1}(\mathbf{x}_{0})h^{\gamma +1},
\end{equation*}%
and the LLRK discretization (\ref{LLRK_Discretizat}) satisfies
\begin{equation*}
\left\Vert \mathbf{x}(t_{n+1};\mathbf{x}_{0})-\mathbf{y}_{n+1}\right\Vert
\leq C_{2}(\mathbf{x}_{0})h^{\gamma },
\end{equation*}%
for all $t_{n},t_{n+1}\in \left( t\right) _{h}$, where $C_{1}(\mathbf{x}%
_{0}) $ and $C_{2}(\mathbf{x}_{0})$ are positive constants depending only on
$\mathbf{x}_{0}$.
\end{theorem}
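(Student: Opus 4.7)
The plan is to invoke Lemma \ref{LLRK local error gen} with $\Lambda^{\xi}(s,\mathbf{0};h)=\mathbf{\rho}(s,\xi;h)$, so that $\mathbf{\digamma}=\mathbf{\varphi}_{\gamma}$. The statement of the theorem is then a direct specialization, once two hypotheses of that lemma are checked: (a) the underlying one-step integrator (here the explicit $s$-stage RK formula) attains its classical order $\gamma$ when applied to the auxiliary ODE \eqref{Diff. Equat for rn}--\eqref{Initial Cond. Diff. Equat for rn} along the reference solution, and (b) the increment function $\mathbf{\varphi}_{\gamma}$ is locally Lipschitz in the state variable uniformly in $h$ on a neighborhood of the exact trajectory.

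To verify (a), note that classical order results for explicit Runge--Kutta methods require the vector field $\mathbf{q}(t_{n},\mathbf{x}(t_{n});\cdot,\cdot)$ to be of class $\mathcal{C}^{\gamma}$ on a neighborhood of the solution of the auxiliary ODE, namely near $\mathbf{u}=\mathbf{0}$. Assertion \emph{iii)} of Lemma \ref{Lemma for Local Error LLRK} together with the hypothesis $\mathbf{f}\in\mathcal{C}^{\gamma+1}$ delivers exactly this smoothness on the compact set $[t_{n},t_{n}+\delta_{\varepsilon}]\times\mathcal{K}_{\varepsilon}$, and assertion \emph{ii)} gives the Lipschitz continuity of $\mathbf{q}$ in its last argument required for the standard RK consistency/error analysis to apply. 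The standard local error theorem for explicit RK then yields $\|\mathbf{r}(t_{n}+h_{n};t_{n},\mathbf{x}(t_{n}))-\mathbf{\rho}(t_{n},\mathbf{x}(t_{n});h_{n})\|\leq C\,h_{n}^{\gamma+1}$, which is precisely the bound that Lemma \ref{LLRK local error gen} needs to convert into the announced local truncation error estimate for $\mathbf{\varphi}_{\gamma}$.

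For (b) I would split $\mathbf{\varphi}_{\gamma}(s,\xi;h)=\mathbf{\varphi}(s,\xi;h)+h^{-1}\mathbf{\rho}(s,\xi;h)$. By Lemma \ref{Lemma de phi LLT}, the assumption $\mathbf{f}\in\mathcal{C}^{\gamma+1}$ with $\gamma\geq 1$ forces $\mathbf{\varphi}\in\mathcal{C}^{\gamma,\gamma,r}$, so in particular $\mathbf{\varphi}(s,\cdot;h)$ is locally Lipschitz on $\mathcal{A}_{\varepsilon}$ with a constant independent of $h\in[0,\delta_{\varepsilon}]$. For the RK increment, recall $\mathbf{\rho}(s,\xi;h)=h\sum_{j}b_{j}\mathbf{k}_{j}(s,\xi;h)$ where each stage is an iterated evaluation of $\mathbf{q}(s,\xi;\cdot,\cdot)$; its dependence on $\xi$ enters through the smooth maps $\xi\mapsto\mathbf{f}(s,\xi)$, $\xi\mapsto\mathbf{f}_{\mathbf{x}}(s,\xi)$, $\xi\mapsto\mathbf{f}_{t}(s,\xi)$ and, via $\mathbf{\phi}$, through Lemma \ref{Lemma de phi LLT} again. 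Induction on the stage index $j$ (using Lemma \ref{Lemma for Local Error LLRK}\emph{ii)} and the $\mathcal{C}^{1}$-smoothness of the inputs, all on the compact set $\mathcal{A}_{\varepsilon}$) gives a uniform local Lipschitz constant for every $\mathbf{k}_{j}$, hence for $h^{-1}\mathbf{\rho}$, and therefore for $\mathbf{\varphi}_{\gamma}$.

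Once (a) and (b) are established, Lemma \ref{LLRK local error gen} delivers both the local bound $C_{1}(\mathbf{x}_{0})h^{\gamma+1}$ and, for $h$ small enough so that the iterates remain in $\mathcal{A}_{\varepsilon}$, the global bound $C_{2}(\mathbf{x}_{0})h^{\gamma}$. The main technical obstacle I anticipate is the induction in step (b): the RK stages are nested evaluations of $\mathbf{q}$ at arguments which themselves depend on $\xi$ through earlier stages and through $\mathbf{\phi}(s,\xi;\cdot)$, so one must track that the Lipschitz constants compound to a finite $h$-independent bound on the compact tube $\mathcal{A}_{\varepsilon}$, rather than blow up as $h\to 0$ (which is why the factor $1/h$ in front of $\mathbf{\rho}$ is harmless, since $\mathbf{\rho}(s,\xi;h)=O(h)$ uniformly in $\xi$).
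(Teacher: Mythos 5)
Your proposal is correct and follows essentially the same route as the paper: both reduce the theorem to Lemma \ref{LLRK local error gen} by (a) establishing the order-$\gamma$ local error of the RK step for the auxiliary ODE via the classical RK error theorem together with the smoothness of $\mathbf{q}$ from assertion \emph{iii)} of Lemma \ref{Lemma for Local Error LLRK}, and (b) verifying the local Lipschitz condition for $\mathbf{\varphi}_{\gamma}$ from Lemma \ref{Lemma de phi LLT} plus the Lipschitz property of the RK increment. The only cosmetic difference is that where you propose a stage-by-stage induction for the Lipschitz bound on $h^{-1}\mathbf{\rho}$, the paper simply cites Lemma 3.5 of \cite{Hairer-Wanner93}, which encapsulates exactly that induction.
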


\begin{proof}
By Theorem 3.1 in \cite{Hairer-Wanner93}, the local truncation error of the
order $\gamma$\ explicit RK scheme (\ref{RK}) for the equation (\ref{Diff.
Equat for rn})-(\ref{Initial Cond. Diff. Equat for rn}) with $\mathbf{y}_{n}=%
\mathbf{x}(t_{n};\mathbf{x}_{0})$ is
\begin{equation}
\left\Vert \mathbf{u}(t_{n}+h)-\mathbf{\rho}\left( t_{n},\mathbf{x}(t_{n};%
\mathbf{x}_{0});h\right) \right\Vert \leq C(\mathbf{x}_{0})\text{ }%
h^{\gamma+1},  \label{ODE-CONV-9}
\end{equation}
where
\begin{equation*}
C(\mathbf{x}_{0})=\frac{1}{(\gamma+1)!}\underset{\theta\in\lbrack0,1]}{\max }%
\left\Vert \frac{d^{\gamma+1}}{dt^{\gamma+1}}\mathbf{u}(t_{n}+\theta
h)\right\Vert +\frac{1}{\gamma!}\sum\limits_{i=1}^{s}\left\vert
b_{i}\right\vert \underset{\theta\in\lbrack0,1]}{\max}\left\Vert \frac{%
d^{\gamma}}{dt^{\gamma}}\mathbf{k}_{i}(\theta h)\right\Vert
\end{equation*}
with
\begin{equation*}
\mathbf{k}_{i}(\theta h)=\mathbf{q}\left( t_{n},\mathbf{x}(t_{n};\mathbf{x}%
_{0}),t_{n}\mathbf{+}c_{i}\theta h\mathbf{,}\theta h\sum _{j=1}^{i-1}a_{ij}%
\mathbf{k}_{j}(\theta h)\right) .
\end{equation*}

By taking into account that the solution $\mathbf{r}$ of (\ref{Diff. Equat
for rn})-(\ref{Initial Cond. Diff. Equat for rn}) is the remainder term of
the LL approximation and by setting $\mathbf{y}_{n}=\mathbf{x}(t_{n};\mathbf{%
x}_{0})$ in (\ref{Diff. Equat for rn}), it follows that
\begin{equation}
\mathbf{u}\left( t_{n}+\theta h\right) =\mathbf{x}\left( t_{n}+\theta h;%
\mathbf{x}_{0}\right) -\mathbf{x}\left( t_{n};\mathbf{x}_{0}\right) -\mathbf{%
\phi }\left( t_{n},\mathbf{x}\left( t_{n};\mathbf{x}_{0}\right) ;\theta
h\right) ,  \label{equat_u}
\end{equation}%
and so%
\begin{equation*}
\left\Vert \frac{d^{\gamma +1}}{dt^{\gamma +1}}\mathbf{u}(t_{n}+\theta
h)\right\Vert =\left\Vert \frac{d^{\gamma }}{dt^{\gamma }}\mathbf{q}(t_{n,}%
\mathbf{x}(t_{n};\mathbf{x}_{0});t_{n}+\theta h,\mathbf{u}(t_{n}+\theta
h))\right\Vert ,
\end{equation*}%
where the derivative in the right term of the last expression is with
respect to the last two arguments of the function $\mathbf{q}$. Condition (%
\ref{ODE-CONV-8}), assertion $iii$) of Lemma \ref{Lemma for Local Error LLRK}
and expression (\ref{equat_u}) imply that $\mathbf{q(}.\mathbf{,x}(.,\mathbf{%
x}_{0});.,\mathbf{u}(.))\in \mathcal{C}^{\gamma }([t_{0},T],\mathbb{R}^{d})$%
. Hence, there exists a constant $M$ such that
\begin{equation*}
\underset{\theta \in \lbrack 0,1],\text{ }t_{n}\in \lbrack t_{0},T]}{\max }%
\left\Vert \frac{d^{\gamma +1}}{dt^{\gamma +1}}\mathbf{u}(t_{n}+\theta
h)\right\Vert \leq M.
\end{equation*}%
Likewise, condition (\ref{ODE-CONV-8}) and Lemma \ref{Lemma for Local Error
LLRK} imply that
\begin{equation*}
\underset{\theta \in \lbrack 0,1],\text{ }t_{n}\in \lbrack t_{0},T]}{\max }%
\left\Vert \frac{d^{\gamma }}{dt^{\gamma }}\mathbf{k}_{i}(\theta
h)\right\Vert \leq M.
\end{equation*}%
Therefore, $C(\mathbf{x}_{0})$ in (\ref{ODE-CONV-9}) is bounded as a
function of $\mathbf{x}_{0}\in \mathcal{D}$.

In addition, Lemma \ref{Lemma de phi LLT} and Lemma 3.5 in \cite%
{Hairer-Wanner93} combined with assertion $iii)$ of Lemma \ref{Lemma for
Local Error LLRK} imply that $\mathbf{\phi }$ and $\mathbf{\rho }$ satisfy
the local Lipschitz condition (\ref{Lipschitz}), and so does the function
\begin{equation*}
\mathbf{\varphi }_{\gamma }(t_{n},\mathbf{y}_{n};h)=\frac{1}{h}\left\{
\mathbf{\phi }\left( t_{n},\mathbf{y}_{n};h\right) +\mathbf{\rho }\left(
t_{n},\mathbf{y}_{n};h\right) \right\}
\end{equation*}%
as well. This and Lemma \ref{LLRK local error gen} complete the proof.
\end{proof}

Note that the Lipschitz and smoothness conditions in Lemma \ref{LLRK local
error gen} and Theorem \ref{Local Error LLRK} are the usual ones required to
derive the convergence of numerical integrators (see, e.g., Theorems 3.1 and
3.6 in \cite{Hairer-Wanner93}). These conditions directly imply that
smoothness of the solution of the ODE in a bounded domain (see, e.g.,
Theorem 1 pp. 79 and Remark 1 pp. 83 in \cite{Perko01}). In this way, to
ensure the convergence of the LLRK integrators, the involved RK coefficients
are not constrained by any stability condition and they just need to satisfy
the usual order conditions for RK schemes. This is a major difference with
the Rosenbrock and Exponential Integrators and makes the LLRK methods more
flexible and simple. Further note that, like these integrators, the LLRK are
trivially A-stable.

\section{Steady states \label{Sec. teoria}}

In this section the relation between the steady states of an autonomous
equation%
\begin{align}
\frac{d\mathbf{x}\left( t\right) }{dt} & =\mathbf{f}\left( \mathbf{x}\left(
t\right) \right) \text{, \ \ }t\in\left[ t_{0},T\right] ,  \label{ODE-SS-0a}
\\
\mathbf{x}(t_{0}) & =\mathbf{x}_{0}\in\mathbb{R}^{d},  \label{ODE-SS-0b}
\end{align}
and those of their LLRK discretizations is considered. For the sake of
simplicity, a uniform time partition $h_{n}=h$ is adopted.

It will be convenient to rewrite the order $\gamma $ LLRK discretization in
the form%
\begin{equation}
\mathbf{y}_{n+1}=\mathbf{y}_{n}+h\mathbf{\varphi }_{\gamma }(\mathbf{y}%
_{n},h),  \label{ODE-SS-1}
\end{equation}%
where
\begin{equation}
\mathbf{\varphi }_{\gamma }\left( \mathbf{\xi ,}\delta \right) ={\Phi }(%
\mathbf{\xi },\delta )\mathbf{f}(\mathbf{\xi })+\sum_{i=1}^{s}b_{i}\mathbf{k}%
_{i}\left( \mathbf{\xi ,}\delta \right) ,  \label{ODE-SS-9}
\end{equation}%
with
\begin{equation}
{\Phi }(\mathbf{\xi },\delta )=\frac{1}{\delta }\int\limits_{0}^{\mathbb{%
\delta }}e^{\mathbf{f}_{\mathbf{x}}(\mathbf{\xi })u}du,  \label{ODE-SS-3}
\end{equation}%
\begin{equation*}
\mathbf{k}_{i}\left( \mathbf{\xi ,}\delta \right) =\mathbf{q}(\mathbf{\xi };%
\text{ }c_{i}\delta ,\text{ }\delta \sum_{j=1}^{i-1}a_{ij}\mathbf{k}%
_{j}\left( \mathbf{\xi ,}\delta \right) )
\end{equation*}%
and
\begin{equation*}
\mathbf{q(\xi };\delta \mathbf{,u)}=\mathbf{f(\xi }+\delta {\Phi }(\mathbf{%
\xi },\delta )\mathbf{f}(\mathbf{\xi })+\mathbf{u})-\mathbf{f}_{\mathbf{x}}(%
\mathbf{\xi })\delta {\Phi }(\mathbf{\xi },\delta )\mathbf{f}(\mathbf{\xi })-%
\mathbf{f}\left( \mathbf{\xi }\right) .
\end{equation*}

For later reference, the following Lemma states some useful properties of
the functions $\mathbf{\varphi}_{\gamma}$ on neighborhoods of invariant sets
of ODEs.

\begin{lemma}
\label{Lemma 5.4}Let $\Sigma\subset\mathbb{R}^{d}$ be an invariant set for
the flow of the equation (\ref{ODE-SS-0a}). Let $\mathcal{K}$ and $\Omega$
be, respectively, compact and bounded open sets such that $\Sigma\subset
\mathcal{K}\subset\Omega$. Suppose that the solution $\mathbf{x}$ of (\ref%
{ODE-SS-0a})-(\ref{ODE-SS-0b}) fulfils the condition%
\begin{equation}
\mathbf{x}(t;\mathbf{x}_{0})\subset\Omega\text{ for all initial point }%
\mathbf{x}_{0}\in\mathcal{K}\text{ and }t\in\lbrack t_{0},T],  \label{H1}
\end{equation}
and the vector field $\mathbf{f}$ satisfies the continuity condition%
\begin{equation}
\mathbf{f}\in\mathcal{C}^{\gamma+1}(\Omega,\mathbb{R}^{d}).  \label{H2}
\end{equation}
Further, let%
\begin{equation*}
\mathbf{y}_{n+1}=\mathbf{y}_{n}+h\mathbf{\varphi}_{\gamma}(\mathbf{y}_{n},h)
\end{equation*}
be the order $\gamma$ LLRK discretization defined by (\ref{ODE-SS-1}). Then

\begin{enumerate}
\item[\textit{i)}] $\mathbf{\varphi}_{\gamma}\rightarrow\mathbf{f}$ and $%
\partial\mathbf{\varphi}_{\gamma}\mathbf{/\partial y}_{n}\rightarrow \mathbf{%
f}_{\mathbf{x}}$ \textit{as }$h\rightarrow0$\textit{\ uniformly in }$%
\mathcal{K}$\textit{,}

\item[\textit{ii)}] $\left\Vert (\mathbf{x}(t_{0}+h;\mathbf{x}_{0})-\mathbf{x%
}_{0})/h-\mathbf{\varphi }_{\gamma }(\mathbf{x}_{0},h)\right\Vert
=O(h^{\gamma })$\textit{\ uniformly for }$\mathbf{x}_{0}\in \mathcal{K}$.
\end{enumerate}
\end{lemma}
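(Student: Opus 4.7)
The plan is to reduce both assertions to continuity and smoothness of $\mathbf{\varphi}_{\gamma}$ on the compact set $\mathcal{K}\times[0,h_{0}]$ for some $h_{0}>0$ small enough, combined with the local truncation bound already established in Theorem \ref{Local Error LLRK}. The crucial observation for \textit{i)} is that every building block of $\mathbf{\varphi}_{\gamma}$ extends analytically to $\delta=0$ and that the extended value recovers the corresponding ``Euler'' ingredient.

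For \textit{i)}, first I would rewrite $\Phi(\mathbf{\xi},\delta)=\vartheta_{1}(\delta\,\mathbf{f}_{\mathbf{x}}(\mathbf{\xi}))$ using the representation from the proof of Lemma \ref{Lemma de phi LLT}; analyticity of $\vartheta_{1}$ combined with (H2) gives $\Phi(\mathbf{\xi},0)=\mathbf{I}$ together with joint continuity of $\Phi$ and of its $\mathbf{\xi}$-derivatives on $\mathcal{K}\times[0,h_{0}]$. Next, evaluating the autonomous vector field $\mathbf{q}$ at $\delta=0$, $\mathbf{u}=\mathbf{0}$ yields $\mathbf{q}(\mathbf{\xi};0,\mathbf{0})=\mathbf{f}(\mathbf{\xi})-\mathbf{0}-\mathbf{f}(\mathbf{\xi})=\mathbf{0}$; since each internal RK argument carries a factor $\delta$, a simple induction on $i$ produces $\mathbf{k}_{i}(\mathbf{\xi},0)=\mathbf{0}$ for every stage. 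Hence $\mathbf{\varphi}_{\gamma}(\cdot,0)\equiv\mathbf{f}$ on an open neighborhood of $\mathcal{K}$. Joint continuity of $\mathbf{\varphi}_{\gamma}$ and of $\partial\mathbf{\varphi}_{\gamma}/\partial\mathbf{y}_{n}$ on $\mathcal{K}\times[0,h_{0}]$, which follows from Lemma \ref{Lemma de phi LLT} (with $p,r\geq 1$) and from Lemma 3.5 in \cite{Hairer-Wanner93} applied to the RK stages, then promotes the identities at $\delta=0$ to the two uniform limits via the standard fact that a continuous function on a compact set is uniformly continuous.

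For \textit{ii)}, the idea is to invoke Theorem \ref{Local Error LLRK} with $t_{n}=t_{0}$ and initial value $\mathbf{x}_{0}\in\mathcal{K}$, which yields
\[
\|\mathbf{x}(t_{0}+h;\mathbf{x}_{0})-\mathbf{x}_{0}-h\,\mathbf{\varphi}_{\gamma}(\mathbf{x}_{0},h)\|\leq C_{1}(\mathbf{x}_{0})\,h^{\gamma+1},
\]
and divide by $h$. The main technical obstacle is the uniformity of the constant $C_{1}(\mathbf{x}_{0})$ over $\mathbf{x}_{0}\in\mathcal{K}$: inspecting the proof of Theorem \ref{Local Error LLRK}, $C_{1}$ is controlled by maxima (up to order $\gamma+1$) of derivatives of $\mathbf{f}$, of $\mathbf{\phi}$ and of the RK stages $\mathbf{k}_{i}$ along the arc $\{\mathbf{x}(t;\mathbf{x}_{0}):t\in[t_{0},t_{0}+h]\}$. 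By hypothesis (H1) each such arc is contained in the bounded open set $\Omega$, and by (H2) together with Lemma \ref{Lemma de phi LLT} all the required derivatives are bounded uniformly on compact subsets of $\Omega$. Therefore $C_{1}$ can be chosen independent of $\mathbf{x}_{0}\in\mathcal{K}$, yielding the desired uniform $O(h^{\gamma})$ estimate.
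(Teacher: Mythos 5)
Your proposal is correct and follows essentially the same route as the paper's own proof: for \textit{i)} you identify the values of $\mathbf{\varphi}_{\gamma}$ and its $\mathbf{\xi}$-derivative at $h=0$ (via the $\vartheta_{1}$ representation of ${\Phi}$ and the vanishing of the stages $\mathbf{k}_{i}$ at $\delta=0$, where the paper instead cites Lemma 5 of \cite{Jimenez02 AMC} for the ${\Phi}\mathbf{f}$ part) and then upgrade to uniform convergence by continuity on a compact set; for \textit{ii)} you invoke the local truncation bound of Theorem \ref{Local Error LLRK} and make its constant uniform over $\mathcal{K}$ by bounding the relevant derivatives on a compact subset of $\Omega$ containing all the short arcs, exactly as the paper does with its set $\mathcal{A}_{h}$.
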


\begin{proof}
According to Lemma 5 in \cite{Jimenez02 AMC}, $\mathbf{f}\in\mathcal{C}%
^{\gamma+1}(\Omega)$ implies that ${\Phi f}\rightarrow\mathbf{f}$ and $%
\partial({\Phi f)/\partial\xi}\rightarrow\mathbf{f}_{\mathbf{x}}$ as $%
h\rightarrow0$\ uniformly in $\mathcal{K}$.

On the other hand, $\mathbf{k}_{i}(\mathbf{\xi },0)=\mathbf{0}$, for all $%
\mathbf{\xi }\in \Omega $ and $i=1,\ldots ,s$. Besides, since
\begin{equation*}
\frac{\partial \mathbf{k}_{i}}{\mathbf{\partial \xi }}(\mathbf{\xi },\delta
)=\frac{\partial \mathbf{q}}{\mathbf{\partial \xi }}(\mathbf{\xi };\text{ }%
c_{i}\delta ,\delta \sum_{j=1}^{i-1}a_{ij}\mathbf{k}_{j}(\mathbf{\xi }%
,\delta )),
\end{equation*}%
where
\begin{align*}
\frac{\partial \mathbf{q}}{\mathbf{\partial \xi }}\mathbf{(\xi };\delta
\mathbf{,u)}& =\mathbf{\mathbf{f}_{\mathbf{x}}(\xi }+\delta {\Phi }(\mathbf{%
\xi },\delta )\mathbf{f}(\mathbf{\xi })+\mathbf{u})\text{ }\frac{\partial }{%
\mathbf{\partial \xi }}\left( \mathbf{\xi }+\delta {\Phi }(\mathbf{\xi }%
,\delta )\mathbf{f}(\mathbf{\xi })+\mathbf{u}\right) \\
& -\delta \frac{\partial }{\mathbf{\partial \xi }}\left( \mathbf{f}_{\mathbf{%
x}}(\mathbf{\xi }){\Phi }(\mathbf{\xi },\delta )\mathbf{f}(\mathbf{\xi }%
)\right) -\mathbf{f}_{\mathbf{x}}\left( \mathbf{\xi }\right) +\mathbf{%
\mathbf{f}_{\mathbf{x}}(\xi }+\delta {\Phi }(\mathbf{\xi },\delta )\mathbf{f}%
(\mathbf{\xi })+\mathbf{u})\text{ }\frac{\partial \mathbf{u}}{\mathbf{%
\partial \xi }}
\end{align*}%
with
\begin{equation*}
\mathbf{u}=\delta \sum_{j=1}^{i-1}a_{ij}\mathbf{k}_{j}(\mathbf{\xi },\delta )%
\text{ \ \ \ \ \ and \ \ \ \ }\frac{\partial \mathbf{u}}{\mathbf{\partial
\xi }}=\delta \sum_{j=1}^{i-1}a_{ij}\frac{\partial }{\mathbf{\partial \xi }}%
\mathbf{k}_{j}(\mathbf{\xi },\delta ),
\end{equation*}%
$\partial \mathbf{k}_{i}(\mathbf{\xi },0)\mathbf{/\partial \xi }=\mathbf{0}$
for all $i=1,\ldots ,s$. Thus, since each $\mathbf{k}_{i}$ and $\partial
\mathbf{k}_{i}\mathbf{/\partial \xi }$ are continuous functions on $\Omega
\times \lbrack 0,1]$, it holds that $\mathbf{k}_{i}\rightarrow \mathbf{0}$
and $\partial \mathbf{k}_{i}\mathbf{/\partial \xi }\rightarrow \mathbf{0}$
as $h\rightarrow 0$\ uniformly in the compact set $\mathcal{K}$. Thus,
assertion \textit{i)} holds.

From Theorem \ref{Local Error LLRK} we have
\begin{equation*}
\left\Vert (\mathbf{x}(t_{0}+h;\mathbf{x}_{0})-\mathbf{x}_{0})/h-\mathbf{%
\varphi }_{\gamma }(\mathbf{x}_{0},h)\right\Vert \leq C(\mathbf{x}%
_{0})h^{\gamma },
\end{equation*}%
where%
\begin{equation*}
C(\mathbf{x}_{0})=\frac{1}{(\gamma +1)!}\underset{\theta \in \lbrack 0,1]}{%
\max }\left\Vert \frac{d^{\gamma +1}}{dt^{\gamma +1}}\mathbf{u}(t_{0}+\theta
h)\right\Vert +\frac{1}{\gamma !}\sum\limits_{i=1}^{s}\left\vert
b_{i}\right\vert \underset{\theta \in \lbrack 0,1]}{\max }\left\Vert \frac{%
d^{\gamma }}{dt^{\gamma }}\mathbf{k}_{i}(\theta h)\right\Vert
\end{equation*}%
is a positive constant depending of $\mathbf{x}_{0}$,
\begin{equation*}
\mathbf{k}_{i}(\theta h)=\mathbf{q}\left( \mathbf{x}(t_{0};\mathbf{x}%
_{0});c_{i}\theta h\mathbf{,}\theta h\sum_{j=1}^{i-1}a_{ij}\mathbf{k}%
_{j}(\theta h)\right) ,\text{\ \ \ \ \ \ \ \ }i=1,\ldots ,s.
\end{equation*}%
and $\mathbf{u}\left( t_{0}+\theta h\right) =\mathbf{x}\left( t_{0}+\theta h;%
\mathbf{x}_{0}\right) -\mathbf{x}\left( t_{0};\mathbf{x}_{0}\right) -\mathbf{%
\phi }\left( t_{0},\mathbf{x}\left( t_{0};\mathbf{x}_{0}\right) ;\theta
h\right) $.

Clearly,
\begin{eqnarray*}
\left\Vert \frac{d^{\gamma +1}}{dt^{\gamma +1}}\mathbf{u}(s)\right\Vert
&=&\left\Vert \frac{d^{\gamma +1}}{dt^{\gamma +1}}(\mathbf{x}\left( s;%
\mathbf{x}_{0}\right) -\mathbf{x}\left( t_{0};\mathbf{x}_{0}\right) -\mathbf{%
\phi }\left( t_{0},\mathbf{x}\left( t_{0};\mathbf{x}_{0}\right)
;s-t_{0}\right) )\right\Vert \\
&\leq &\left\Vert \frac{d^{\gamma }}{dt^{\gamma }}\mathbf{f}(\mathbf{x}%
\left( s;\mathbf{x}_{0}\right) )\right\Vert +\left\Vert \frac{d^{\gamma +1}}{%
dt^{\gamma +1}}\mathbf{\phi }\left( t_{0},\mathbf{x}\left( t_{0};\mathbf{x}%
_{0}\right) ;s-t_{0}\right) \right\Vert
\end{eqnarray*}%
for all $s\in \lbrack t_{0},t_{0}+h]$. Since $\mathbf{x}(t;\mathbf{x}%
_{0})\in \Omega $ for all $t\in \lbrack t_{0},T]$ and $\mathbf{x}_{0}\in
\mathcal{K}$ $\subset $ $\Omega $, there exists a compact set $\mathcal{A}%
_{h}$ depending of $h$ such that $\mathcal{K}\subset \mathcal{A}_{h}$ $%
\subset \Omega $ and $\mathbf{\mathbf{x}}(s\mathbf{;\mathbf{x}}_{0})\in
\mathcal{A}_{h}$ for all $s\in \lbrack t_{0},t_{0}+h]$\ and $\mathbf{\mathbf{%
x}}_{0}\in \mathcal{K}$. In addition, since condition $\mathbf{f}\in
\mathcal{C}^{\gamma +1}(\Omega ,\mathbb{R}^{d})$ implies that there exists a
constant $M$ such that
\begin{equation*}
\underset{\xi \in \mathcal{A}_{h}}{\sup }\left\Vert \frac{d^{\gamma }}{%
dt^{\gamma }}\mathbf{f}(\xi )\right\Vert \leq M,
\end{equation*}%
it is obtained that%
\begin{equation*}
\underset{\theta \in \lbrack 0,1],\text{ }\mathbf{\mathbf{x}}_{0}\in
\mathcal{K}}{\max }\left\Vert \frac{d^{\gamma }}{dt^{\gamma }}\mathbf{f}(%
\mathbf{x}\left( t_{0}+\theta h;\mathbf{x}_{0}\right) )\right\Vert \leq
\underset{\xi \in \mathcal{A}_{h}}{\sup }\left\Vert \frac{d^{\gamma }}{%
dt^{\gamma }}\mathbf{f}(\xi )\right\Vert \leq M.
\end{equation*}

Taking into account that $\mathbf{\phi }$ and $\mathbf{k}_{i}$ are functions
of $\mathbf{f}$, we can similarly proceed to find a bound $B>0$ independent
of $\theta $, $\mathbf{x}_{0}$ for $\left\Vert \frac{d^{\gamma +1}}{%
dt^{\gamma +1}}\mathbf{\phi }\left( t_{0},\mathbf{x}\left( t_{0};\mathbf{x}%
_{0}\right) ;s-t_{0}\right) \right\Vert $ and $\left\Vert \frac{d^{\gamma }}{%
dt^{\gamma }}\mathbf{k}_{i}(\theta h)\right\Vert $. Hence, we conclude that $%
C(\mathbf{x}_{0})$ is bounded on $\mathcal{K}$ by a constant independent of $%
\mathbf{x}_{0}$, and so \textit{ii)} follows.
\end{proof}

\subsection{Fixed points and linearization preserving}

\begin{theorem}
\label{Prop. 5.2 LLRK} Suppose that the vector field $\mathbf{f}$ of the
equation (\ref{ODE-SS-0a}) and its derivatives up to order $\gamma $\ are
defined and bounded on $\mathbb{R}^{d}$. Then, all equilibrium points of the
given ODE (\ref{ODE-SS-0a}) are fixed points of any LLRK discretization.
\end{theorem}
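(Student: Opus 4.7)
The plan is to verify directly that, for any equilibrium point $\mathbf{x}^{\ast }$ of (\ref{ODE-SS-0a}), i.e.\ any $\mathbf{x}^{\ast }\in \mathbb{R}^{d}$ with $\mathbf{f}(\mathbf{x}^{\ast })=\mathbf{0}$, the increment $\mathbf{\varphi }_{\gamma }(\mathbf{x}^{\ast },h)$ appearing in (\ref{ODE-SS-1}) vanishes identically in $h$. This is sufficient: inserting $\mathbf{y}_{n}=\mathbf{x}^{\ast }$ into the recursion then yields $\mathbf{y}_{n+1}=\mathbf{x}^{\ast }+h\,\mathbf{0}=\mathbf{x}^{\ast }$, so $\mathbf{x}^{\ast }$ is a fixed point of the LLRK map.

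Using the explicit decomposition (\ref{ODE-SS-9}), I would write
\[
\mathbf{\varphi }_{\gamma }(\mathbf{x}^{\ast },h)={\Phi }(\mathbf{x}^{\ast },h)\,\mathbf{f}(\mathbf{x}^{\ast })+\sum_{i=1}^{s}b_{i}\,\mathbf{k}_{i}(\mathbf{x}^{\ast },h),
\]
and handle the two summands separately. The first one is trivially zero because $\mathbf{f}(\mathbf{x}^{\ast })=\mathbf{0}$; the boundedness of $\mathbf{f}_{\mathbf{x}}$ on $\mathbb{R}^{d}$ guarantees that the matrix integral (\ref{ODE-SS-3}) defining ${\Phi }(\mathbf{x}^{\ast },h)$ is finite, so the product makes sense. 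The work therefore reduces to showing that every internal stage satisfies $\mathbf{k}_{i}(\mathbf{x}^{\ast },h)=\mathbf{0}$.

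The key observation is that, at $\mathbf{\xi }=\mathbf{x}^{\ast }$, the auxiliary vector field $\mathbf{q}$ collapses dramatically: the three terms $\delta {\Phi }(\mathbf{x}^{\ast },\delta )\mathbf{f}(\mathbf{x}^{\ast })$, $\mathbf{f}_{\mathbf{x}}(\mathbf{x}^{\ast })\,\delta {\Phi }(\mathbf{x}^{\ast },\delta )\mathbf{f}(\mathbf{x}^{\ast })$ and the standalone $\mathbf{f}(\mathbf{x}^{\ast })$ all vanish, leaving the simple identity
\[
\mathbf{q}(\mathbf{x}^{\ast };\delta ,\mathbf{u})=\mathbf{f}(\mathbf{x}^{\ast }+\mathbf{u}).
\]
A short induction on $i$ then finishes the argument. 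For $i=1$ the sum $\sum_{j=1}^{0}a_{1j}\mathbf{k}_{j}$ is empty, so the stage input is $\mathbf{u}=\mathbf{0}$ and $\mathbf{k}_{1}(\mathbf{x}^{\ast },h)=\mathbf{f}(\mathbf{x}^{\ast })=\mathbf{0}$. Assuming $\mathbf{k}_{j}(\mathbf{x}^{\ast },h)=\mathbf{0}$ for all $j<i$, the stage input $h\sum_{j=1}^{i-1}a_{ij}\mathbf{k}_{j}$ is again $\mathbf{0}$, and the same evaluation gives $\mathbf{k}_{i}(\mathbf{x}^{\ast },h)=\mathbf{f}(\mathbf{x}^{\ast })=\mathbf{0}$. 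This propagates through all $s$ stages and proves the claim.

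I do not expect any substantial obstacle in this proof; the assertion is essentially an algebraic identity encoded in the LL splitting plus the explicit triangular structure of the RK stages. The role of the smoothness/boundedness hypothesis is not to support any estimate but simply to guarantee that ${\Phi }(\mathbf{x}^{\ast },h)$, $\mathbf{f}_{\mathbf{x}}(\mathbf{x}^{\ast })$, and all the stages $\mathbf{k}_{i}(\mathbf{x}^{\ast },h)$ are well defined on all of $\mathbb{R}^{d}$, so that the LLRK scheme can actually be evaluated at the equilibrium and the increment $\mathbf{\varphi }_{\gamma }(\mathbf{x}^{\ast },h)$ makes sense.
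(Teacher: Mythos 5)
Your proof is correct and follows essentially the same route as the paper's: both reduce the claim to showing ${\Phi}(\mathbf{x}^{\ast},h)\mathbf{f}(\mathbf{x}^{\ast})=\mathbf{0}$ and $\mathbf{k}_{i}(\mathbf{x}^{\ast},h)=\mathbf{0}$ for all stages, which forces $\mathbf{\varphi}_{\gamma}(\mathbf{x}^{\ast},h)=\mathbf{0}$. The only difference is that you make explicit the collapse $\mathbf{q}(\mathbf{x}^{\ast};\delta,\mathbf{u})=\mathbf{f}(\mathbf{x}^{\ast}+\mathbf{u})$ and the induction over the explicit RK stages, which the paper leaves implicit.
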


\begin{proof}
Let $\mathbf{\varphi }_{\gamma }$, ${\Phi }$ and $\mathbf{k}_{i}$ be the
functions defined in expression (\ref{ODE-SS-9}). If $\mathbf{\xi }$ is an
equilibrium point of (\ref{ODE-SS-0a}), then $\mathbf{f}(\mathbf{\xi })=%
\mathbf{0}$ and so ${\Phi }(\mathbf{\xi },h)\mathbf{f}(\mathbf{\xi })=%
\mathbf{0}$ and $\mathbf{k}_{i}(\mathbf{\xi },h)=\mathbf{0}$ for all $h$ and
$i=1,\ldots ,s$. Thus, $\mathbf{\varphi }_{\gamma }(\mathbf{\xi },h)=\mathbf{%
0}$ for all $h$, which implies that $\mathbf{\xi }$ is a fixed point of the
LLRK discretization (\ref{ODE-SS-1}).
\end{proof}

A numerical integrator $\mathbf{u}_{n+1}=\mathbf{u}_{n}+{\Lambda }\left(
t_{n},\mathbf{u}_{n};h_{n}\right) $ is linearization preserving at an
equilibrium point $\mathbf{\xi }$ of the ODE (\ref{ODE-SS-0a}) if from the
Taylor series expansion of ${\Lambda }\left( t_{n},\mathbf{\cdot }%
;h_{n}\right) $ around $\mathbf{\xi }$ it is obtained that%
\begin{equation*}
\mathbf{u}_{n+1}-\mathbf{\xi }=e^{h\mathbf{f}_{x}(\mathbf{\xi })}(\mathbf{u}%
_{n}-\mathbf{\xi )}+O(\left\Vert \mathbf{u}_{n}-\mathbf{\xi }\right\Vert
^{2}).
\end{equation*}%
Furthermore, an integrator is said to be linearization preserving if it is
linearization preserving at all equilibrium points of the ODE \cite%
{McLachlan09}.

This property ensures that the integrator correctly captures all eigenvalues
of the linearized system at every equilibrium point of an ODE, which
guarantees the exact preservation (in type and parameters) of a number of
local bifurcations of the underlying equation \cite{McLachlan09}. Certainly,
this results in a correct reproduction of the local dynamics before, during
and after a bifurcation anywhere in the phase space by the numerical
integrator.

In \cite{McLachlan09} the linearization preserving property of the LL
discretization (\ref{ODE-LLA-4}) was demonstrated. This property is also
inherited by LLRK discretizations as it is shown by the next theorem.

\begin{theorem}
Let the vector field $\mathbf{f}$ of the equation (\ref{ODE-SS-0a}) and its
derivatives up to order $2$\ be functions defined and bounded on $\mathbb{R}%
^{d}$. Then, LLRK discretizations are linearization preserving.
\end{theorem}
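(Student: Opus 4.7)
Let $\mathbf{\xi}$ be an equilibrium of \eqref{ODE-SS-0a}, i.e. $\mathbf{f}(\mathbf{\xi})=\mathbf{0}$, and set $\mathbf{\epsilon}=\mathbf{y}_n-\mathbf{\xi}$, which we take small. The plan is to expand $\mathbf{y}_{n+1}=\mathbf{y}_n+h\mathbf{\varphi}_\gamma(\mathbf{y}_n,h)$ as written in \eqref{ODE-SS-9}, treating the ``LL part'' $h{\Phi}(\mathbf{y}_n,h)\mathbf{f}(\mathbf{y}_n)$ and the ``RK part'' $h\sum_{i=1}^s b_i\mathbf{k}_i(\mathbf{y}_n,h)$ separately, and showing that the first produces the desired $e^{h\mathbf{f}_{\mathbf{x}}(\mathbf{\xi})}\mathbf{\epsilon}$ while the second is already $O(\|\mathbf{\epsilon}\|^2)$.

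For the LL part, use $\mathbf{f}(\mathbf{\xi}+\mathbf{\epsilon})=\mathbf{f}_{\mathbf{x}}(\mathbf{\xi})\mathbf{\epsilon}+O(\|\mathbf{\epsilon}\|^2)$ together with the continuity of ${\Phi}$ in its first argument to obtain
\begin{equation*}
h{\Phi}(\mathbf{y}_n,h)\mathbf{f}(\mathbf{y}_n)=h{\Phi}(\mathbf{\xi},h)\mathbf{f}_{\mathbf{x}}(\mathbf{\xi})\mathbf{\epsilon}+O(\|\mathbf{\epsilon}\|^2).
\end{equation*}
Then apply identity \eqref{ODE-LLA-3} in the equivalent form $\int_0^h e^{\mathbf{f}_{\mathbf{x}}(\mathbf{\xi})u}\,du\,\mathbf{f}_{\mathbf{x}}(\mathbf{\xi})=e^{h\mathbf{f}_{\mathbf{x}}(\mathbf{\xi})}-\mathbf{I}$, which shows that $h{\Phi}(\mathbf{\xi},h)\mathbf{f}_{\mathbf{x}}(\mathbf{\xi})=e^{h\mathbf{f}_{\mathbf{x}}(\mathbf{\xi})}-\mathbf{I}$, hence
\begin{equation*}
\mathbf{y}_n+h{\Phi}(\mathbf{y}_n,h)\mathbf{f}(\mathbf{y}_n)=\mathbf{\xi}+e^{h\mathbf{f}_{\mathbf{x}}(\mathbf{\xi})}\mathbf{\epsilon}+O(\|\mathbf{\epsilon}\|^2).
\end{equation*}

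The main obstacle is to show $\mathbf{k}_i(\mathbf{y}_n,h)=O(\|\mathbf{\epsilon}\|^2)$ for every stage $i$. Rewrite the vector field $\mathbf{q}$ in the form
\begin{equation*}
\mathbf{q}(\mathbf{y}_n;\delta,\mathbf{u})=\mathbf{f}_{\mathbf{x}}(\mathbf{y}_n)\mathbf{u}+\mathbf{g}(\mathbf{y}_n;\mathbf{y}_n+\delta{\Phi}(\mathbf{y}_n,\delta)\mathbf{f}(\mathbf{y}_n)+\mathbf{u}),
\end{equation*}
where $\mathbf{g}(\mathbf{\eta};\mathbf{z})=\mathbf{f}(\mathbf{z})-\mathbf{f}_{\mathbf{x}}(\mathbf{\eta})(\mathbf{z}-\mathbf{\eta})-\mathbf{f}(\mathbf{\eta})$ is the first-order Taylor remainder of $\mathbf{f}$ at $\mathbf{\eta}$, so that $\mathbf{g}(\mathbf{\eta};\mathbf{z})=O(\|\mathbf{z}-\mathbf{\eta}\|^2)$ with a constant uniform for $\mathbf{\eta}$ in a compact neighborhood of $\mathbf{\xi}$ (this uses the boundedness of the second derivatives of $\mathbf{f}$). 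Since $\mathbf{f}(\mathbf{y}_n)=O(\|\mathbf{\epsilon}\|)$, the displacement $\delta{\Phi}(\mathbf{y}_n,\delta)\mathbf{f}(\mathbf{y}_n)+\mathbf{u}$ is $O(\|\mathbf{\epsilon}\|+\|\mathbf{u}\|)$, so $\mathbf{g}(\mathbf{y}_n;\mathbf{y}_n+\delta{\Phi}\mathbf{f}(\mathbf{y}_n)+\mathbf{u})=O((\|\mathbf{\epsilon}\|+\|\mathbf{u}\|)^2)$. I then argue by induction on $i$: for the base case $\mathbf{u}=\mathbf{0}$ gives $\mathbf{k}_1=\mathbf{g}(\mathbf{y}_n;\mathbf{y}_n+c_1 h{\Phi}\mathbf{f}(\mathbf{y}_n))=O(\|\mathbf{\epsilon}\|^2)$, and assuming $\mathbf{k}_j=O(\|\mathbf{\epsilon}\|^2)$ for $j<i$ yields $\mathbf{u}_i=h\sum_{j<i}a_{ij}\mathbf{k}_j=O(\|\mathbf{\epsilon}\|^2)$, whence both the linear term $\mathbf{f}_{\mathbf{x}}(\mathbf{y}_n)\mathbf{u}_i$ and the Taylor-remainder term in $\mathbf{q}$ are $O(\|\mathbf{\epsilon}\|^2)$.

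Combining the two pieces gives $\mathbf{y}_{n+1}-\mathbf{\xi}=e^{h\mathbf{f}_{\mathbf{x}}(\mathbf{\xi})}(\mathbf{y}_n-\mathbf{\xi})+O(\|\mathbf{y}_n-\mathbf{\xi}\|^2)$, which is the definition of linearization preserving. Since the argument works at every equilibrium point, the conclusion follows. The only delicate point is the uniform quadratic bound on $\mathbf{g}$ and the induction through the RK stages; everything else is a direct computation using the exponential identity and the hypothesis $\mathbf{f}(\mathbf{\xi})=\mathbf{0}$.
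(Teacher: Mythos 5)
Your proposal is correct and takes essentially the same route as the paper's proof: split $\mathbf{\varphi}_{\gamma}$ into the LL part and the RK stages, reduce the LL part to $e^{h\mathbf{f}_{\mathbf{x}}(\mathbf{\xi })}-\mathbf{I}$ via the identity (\ref{ODE-LLA-3}), and show by induction through the stages that each $\mathbf{k}_{i}=O(\left\Vert \mathbf{y}_{n}-\mathbf{\xi }\right\Vert ^{2})$ using the quadratic Taylor remainder of $\mathbf{f}$. The one point to tighten is that where you invoke ``continuity'' of ${\Phi }$ in its first argument you actually need its local Lipschitz continuity (which does hold, and which the paper takes from Lemma 1 of \cite{Jimenez02 AMC}), since the cross term $\left\Vert {\Phi }(\mathbf{y}_{n},h)-{\Phi }(\mathbf{\xi },h)\right\Vert \,\left\Vert \mathbf{f}(\mathbf{y}_{n})\right\Vert $ must be $O(\left\Vert \mathbf{y}_{n}-\mathbf{\xi }\right\Vert ^{2})$, whereas mere continuity only yields $o(\left\Vert \mathbf{y}_{n}-\mathbf{\xi }\right\Vert )$.
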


\begin{proof}
Let $\mathbf{\xi}$ be an arbitrary equilibrium point of the ODE (\ref%
{ODE-SS-0a}) and let the initial condition $\mathbf{y}_{n}$ be in the
neighborhood of $\mathbf{\xi}$.

Let us consider the Taylor expansion of $\mathbf{f}$ around $\mathbf{\xi}$
\begin{equation*}
\mathbf{f}(\mathbf{y}_{n})=\mathbf{f}_{\mathbf{x}}(\mathbf{\xi})(\mathbf{y}%
_{n}-\mathbf{\xi})+O(\left\Vert \mathbf{y}_{n}-\mathbf{\xi}\right\Vert ^{2})
\end{equation*}
and the LL discretization
\begin{equation*}
\mathbf{y}_{n+1}=\mathbf{y}_{n}+h{\Phi}(\mathbf{y}_{n},h)\mathbf{f}(\mathbf{y%
}_{n}),
\end{equation*}
where ${\Phi}$ defined as in (\ref{ODE-SS-3}) is, according to assertion
\textit{i)} of Lemma 1 in \cite{Jimenez02 AMC}, a Lipschitz function. By
combining this Taylor expansion with both, the identity (\ref{ODE-LLA-3})
and the Lipschitz inequality $\left\Vert {\Phi}(\mathbf{y}_{n},h)-{\Phi }(%
\mathbf{\xi},h)\right\Vert \leq$ $\lambda\left\Vert \mathbf{y}_{n}-\mathbf{%
\xi}\right\Vert $ it is obtained
\begin{equation}
h{\Phi}(\mathbf{\xi},h)\mathbf{f}(\mathbf{y}_{n})=(e^{h\mathbf{f}_{\mathbf{x}%
}(\mathbf{\xi})}-\mathbf{I})(\mathbf{y}_{n}-\mathbf{\xi})+O(\left\Vert
\mathbf{y}_{n}-\mathbf{\xi}\right\Vert ^{2})  \label{ODE-SS-11}
\end{equation}
and
\begin{equation}
\left\Vert ({\Phi}(\mathbf{y}_{n},h)-{\Phi}(\mathbf{\xi},h))\mathbf{f}(%
\mathbf{y}_{n})\right\Vert \leq C\left\Vert \mathbf{y}_{n}-\mathbf{\xi }%
\right\Vert ^{2},  \label{ODE-SS-12}
\end{equation}
respectively, where $C$ is a positive constant.

Now, consider the LLRK discretization
\begin{equation*}
\mathbf{y}_{n+1}=\mathbf{y}_{n}+h\mathbf{\varphi }_{\gamma }(\mathbf{y}%
_{n},h),
\end{equation*}%
with $\mathbf{\varphi }_{\gamma }$ defined as in (\ref{ODE-SS-9}). From the
Taylor formula with Lagrange remainder it is obtained that
\begin{align*}
\left\Vert \mathbf{q(y}_{n};h\mathbf{,u)}\right\Vert & =\left\Vert \mathbf{%
f(y}_{n}+{\Phi }(\mathbf{y}_{n},h)\mathbf{f}(\mathbf{y}_{n})h+\mathbf{u})-%
\mathbf{f}_{\mathbf{x}}(\mathbf{y}_{n}){\Phi }(\mathbf{y}_{n},h)\mathbf{f}(%
\mathbf{y}_{n})h-\mathbf{f}\left( \mathbf{y}_{n}\right) \right\Vert \\
& \leq M\left\Vert {\Phi }(\mathbf{y}_{n},h)\mathbf{f}(\mathbf{y}_{n})h+%
\mathbf{u}\right\Vert ^{2}+\left\Vert \mathbf{f}_{\mathbf{x}}(\mathbf{y}%
_{n})\right\Vert \left\Vert \mathbf{u}\right\Vert ,
\end{align*}%
where the positive constant $M$ is a bound for $\left\Vert \mathbf{f}_{%
\mathbf{xx}}\right\Vert $ on a compact subset $\mathcal{K}\subset $ $\mathbb{%
R}^{d}$ such that $\mathbf{y}_{n},\mathbf{y}_{n+1},\mathbf{\xi \in }$ $%
\mathcal{K}$. By using (\ref{ODE-SS-11}) and (\ref{ODE-SS-12}) it follows
that
\begin{equation*}
\left\Vert \mathbf{q(y}_{n};h\mathbf{,u)}\right\Vert \leq 2M\left\Vert
\mathbf{u}\right\Vert ^{2}+\left\Vert \mathbf{f}_{\mathbf{x}}(\mathbf{y}%
_{n})\right\Vert \left\Vert \mathbf{u}\right\Vert +O(\left\Vert \mathbf{y}%
_{n}-\mathbf{\xi }\right\Vert ^{2}).
\end{equation*}%
From the last inequality and taking into account that $\mathbf{k}_{1}=%
\mathbf{0}$, it is obtained that $\left\Vert \mathbf{k}_{2}\right\Vert \leq
O(\left\Vert \mathbf{y}_{n}-\mathbf{\xi }\right\Vert ^{2})$. Furthermore, by
induction, it is obtained that $\left\Vert \mathbf{k}_{i}\right\Vert \leq
O(\left\Vert \mathbf{y}_{n}-\mathbf{\xi }\right\Vert ^{2})$ for all $%
i=1,2,...,s.$ From this, (\ref{ODE-SS-11}) and (\ref{ODE-SS-9}) it follows
that
\begin{equation*}
h\mathbf{\varphi }_{\gamma }(\mathbf{y}_{n},h)=(e^{h\mathbf{f}_{\mathbf{x}}(%
\mathbf{\xi })}-\mathbf{I})(\mathbf{y}_{n}-\mathbf{\xi })+O(\left\Vert
\mathbf{y}_{n}-\mathbf{\xi }\right\Vert ^{2}),
\end{equation*}%
which implies that the LLRK discretization is linearization preserving.
\end{proof}

The next two subsections deal with a more precise analysis of the dynamical
behavior of the LLRK discretizations in the neighborhood of some steady
states.

\subsection{Phase portrait near equilibrium points}

Let $\mathbf{0}$ be a hyperbolic equilibrium point of the equation (\ref%
{ODE-SS-0a}). Let $X_{s},X_{u}\subset\mathbb{R}^{d}$ be the stable and
unstable subspaces of the linear vector field $\mathbf{f}_{\mathbf{x}}(%
\mathbf{0})$ such that $\mathbb{R}^{d}=X_{s}\oplus X_{u}$, $(\mathbf{x}_{s},%
\mathbf{x}_{u})=\mathbf{x}\in\mathbb{R}^{d}$ and $\left\Vert \mathbf{x}%
\right\Vert =\max\{\left\Vert \mathbf{x}_{s}\right\Vert ,\left\Vert \mathbf{x%
}_{u}\right\Vert \}$. It is well-known that the local stable and unstable
manifolds at $\mathbf{0}$ may be represented as $M_{s}=\{(\mathbf{x}_{s},p(%
\mathbf{x}_{s})):\mathbf{x}_{s}\in\mathcal{K}_{\varepsilon,s}\}$ and $%
M_{u}=\{(q(\mathbf{x}_{u}),\mathbf{x}_{u})):\mathbf{x}_{u}\in\mathcal{K}%
_{\varepsilon,u}\}$, respectively, where the functions $p:\mathcal{K}%
_{\varepsilon,s}=\mathcal{K}_{\mathbb{\varepsilon}}\cap X_{s}\rightarrow
\mathcal{K}_{\varepsilon,u}=\mathcal{K}_{\mathbb{\varepsilon}}\cap X_{u}$
and $q:\mathcal{K}_{\varepsilon,u}\rightarrow\mathcal{K}_{\varepsilon,s}$
are as smooth as $\mathbf{f}$, and $\mathcal{K}_{\mathbb{\varepsilon}}=\{%
\mathbf{x}\in\mathbb{R}^{d}:\left\Vert \mathbf{x}\right\Vert
\leq\varepsilon\}$ for $\varepsilon>0$.

\begin{theorem}
\label{Prop. 5.3} Suppose that the conditions (\ref{H1})-(\ref{H2}) of Lemma %
\ref{Lemma 5.4} hold on a neighborhood $\Omega $ of $\mathbf{0}$. Then there
exist constants $C$, $\varepsilon $, $\varepsilon _{0}$, $h_{0}>0$ such that
the local stable $M_{s}^{h}$ and unstable $M_{u}^{h}$ manifolds of the order
$\gamma $ LLRK discretization (\ref{ODE-SS-1}) at $\mathbf{0}$ are of the
form
\begin{equation*}
M_{s}^{h}=\{(\mathbf{x}_{s},p^{h}(\mathbf{x}_{s})):\mathbf{x}_{s}\in
\mathcal{K}_{\varepsilon ,s}\}\text{ and }M_{u}^{h}=\{(q^{h}(\mathbf{x}_{u}),%
\mathbf{x}_{u}):\mathbf{x}_{u}\in \mathcal{K}_{\varepsilon ,u}\},
\end{equation*}%
where $p^{h}=p+O(h^{\gamma })$ uniformly in $\mathcal{K}_{\varepsilon ,s}$ ,
and $q^{h}=q+O(h^{\gamma })$ uniformly in $\mathcal{K}_{\varepsilon ,u}$.
Moreover, for any $\mathbf{x}_{0}\in \mathcal{K}_{\mathbb{\varepsilon }}$
and $h\leq h_{0}$, there exists $\mathbf{z}_{0}=\mathbf{z}_{0}(\mathbf{x}%
_{0},h)\in \mathcal{K}_{\varepsilon _{0}}$ satisfying
\begin{equation}
sup\{\left\Vert \mathbf{x}(t_{n};\mathbf{x}_{0})-\mathbf{y}_{n}(\mathbf{z}%
_{0})\right\Vert :\mathbf{x}(t;\mathbf{x}_{0})\in \mathcal{K}_{\mathbb{%
\varepsilon }}\text{ for }t\in \lbrack t_{0},t_{n}]\}\leq Ch^{\gamma }.
\label{ODE-SS-6}
\end{equation}%
Correspondingly, for any $\mathbf{z}_{0}\in \mathcal{K}_{\mathbb{\varepsilon
}}$ and $h\leq h_{0}$, there exists $\mathbf{x}_{0}=\mathbf{x}_{0}(\mathbf{z}%
_{0},h)\in \mathcal{K}_{\varepsilon _{0}}$ that fulfils (\ref{ODE-SS-6}),
where the supremum is taken over all $n$ satisfying $\mathbf{y}_{j}(\mathbf{z%
}_{0})\in \mathcal{K}_{\mathbb{\varepsilon }}$, $j=0,\ldots ,n$.
\end{theorem}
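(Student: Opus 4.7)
The plan is to combine the linearization-preserving property just established with Lemma \ref{Lemma 5.4} and apply the standard theory of invariant manifolds and shadowing for $C^{1}$ diffeomorphisms near a hyperbolic fixed point. Write $\Psi_{h}(\mathbf{y})=\mathbf{y}+h\mathbf{\varphi}_{\gamma}(\mathbf{y},h)$ for the one-step LLRK map and $S_{h}$ for the time-$h$ flow of \eqref{ODE-SS-0a}. First I would verify that $\mathbf{0}$ is a hyperbolic fixed point of $\Psi_{h}$ with the same splitting $\mathbb{R}^{d}=X_{s}\oplus X_{u}$ as the ODE. Theorem \ref{Prop. 5.2 LLRK} gives $\Psi_{h}(\mathbf{0})=\mathbf{0}$, and the preceding linearization-preserving theorem yields $D\Psi_{h}(\mathbf{0})=e^{h\mathbf{f}_{\mathbf{x}}(\mathbf{0})}$; since the spectrum of this matrix is the image of $\sigma(\mathbf{f}_{\mathbf{x}}(\mathbf{0}))$ under $\lambda\mapsto e^{h\lambda}$, hyperbolicity is preserved with the same invariant subspaces $X_{s},X_{u}$ for every $h>0$. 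Assertion \textit{i)} of Lemma \ref{Lemma 5.4} then gives $C^{1}$-convergence of $\Psi_{h}$ to the identity on compacts and uniform control on the nonlinear part of $\Psi_{h}-D\Psi_{h}(\mathbf{0})\cdot$, which together produce hyperbolicity constants that are uniform in $h\in(0,h_{0}]$.

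Second, I would apply the local stable/unstable manifold theorem for $C^{1}$ diffeomorphisms to $\Psi_{h}$ at $\mathbf{0}$, producing $\varepsilon>0$ independent of $h$ and $C^{1}$ functions $p^{h}:\mathcal{K}_{\varepsilon,s}\to X_{u}$, $q^{h}:\mathcal{K}_{\varepsilon,u}\to X_{s}$ whose graphs are $M_{s}^{h}$, $M_{u}^{h}$. To quantify $p^{h}-p$ and $q^{h}-q$, I would use a Lyapunov--Perron formulation, characterising $p$ and $p^{h}$ as unique fixed points of graph transforms built from $S_{h}$ and $\Psi_{h}$ respectively, inside a fixed ball of $C^{0}(\mathcal{K}_{\varepsilon,s},X_{u})$. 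By the uniform hyperbolicity above, both graph transforms are contractions with contraction ratio bounded away from $1$ uniformly in $h$. Assertion \textit{ii)} of Lemma \ref{Lemma 5.4} gives $\Psi_{h}(\mathbf{x})-S_{h}(\mathbf{x})=O(h^{\gamma+1})$ uniformly on $\mathcal{K}_{\varepsilon}$, and the standard perturbation lemma for contractions then yields $\|p^{h}-p\|_{\infty}=O(h^{\gamma})$ and, by time reversal, $\|q^{h}-q\|_{\infty}=O(h^{\gamma})$.

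Third, the shadowing estimate \eqref{ODE-SS-6} follows from the classical shadowing lemma near a hyperbolic fixed point. For the forward direction, Lemma \ref{Lemma 5.4} \textit{ii)} shows that, as long as the continuous orbit $\mathbf{z}_{j}:=\mathbf{x}(t_{j};\mathbf{x}_{0})$ remains in $\mathcal{K}_{\varepsilon}$, it is a $\delta$-pseudo-orbit of $\Psi_{h}$ with $\delta=O(h^{\gamma+1})$. The uniform hyperbolicity of $D\Psi_{h}(\mathbf{0})$ and the uniform $C^{1}$-Lipschitz control on its nonlinear part allow an explicit Bowen-type shadowing lemma to produce a genuine $\Psi_{h}$-orbit $\{\mathbf{y}_{n}(\mathbf{z}_{0})\}$ starting from some $\mathbf{z}_{0}\in\mathcal{K}_{\varepsilon_{0}}$ with $\varepsilon_{0}<\varepsilon$ chosen so that the shadowing orbit stays inside $\mathcal{K}_{\varepsilon}$, and satisfying \eqref{ODE-SS-6}. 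The converse statement, approximating a discrete orbit by a continuous one, is symmetric: one interchanges the roles of $\Psi_{h}$ and $S_{h}$ and uses Lemma \ref{Lemma 5.4} \textit{ii)} in the opposite direction.

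The main obstacle is the \emph{uniformity in $h$} of every constant involved---the radius $\varepsilon$ on which the graphs are defined, the contraction ratio of the graph transform, and the spectral gap used in the shadowing lemma---since each of these is typically $h$-dependent when obtained by a direct application of the relevant abstract theorem to a single $\Psi_{h}$. The uniformity rests on the fact that, by the linearization-preserving identity, $D\Psi_{h}(\mathbf{0})=e^{h\mathbf{f}_{\mathbf{x}}(\mathbf{0})}$ has a spectral gap that scales correctly in $h$, and that $(\Psi_{h}-\mathrm{id})/h\to \mathbf{f}$ together with its Jacobian uniformly on compacts by Lemma \ref{Lemma 5.4} \textit{i)}. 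The remainder is careful bookkeeping of the $h$-dependence in the standard invariant-manifold and shadowing proofs, in the spirit of Beyn's analysis of phase portraits near stationary points of numerical methods.
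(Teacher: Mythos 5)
Your proposal is correct in outline, but it takes a genuinely different route from the paper. The paper's proof is a verification-plus-citation: it uses Lemma \ref{Lemma 5.4} to check that $\mathbf{\varphi }_{\gamma }\rightarrow \mathbf{f}$ and $\partial \mathbf{\varphi }_{\gamma }/\partial \mathbf{y}\rightarrow \mathbf{f}_{\mathbf{x}}$ uniformly on a compact neighborhood $\mathcal{K}_{\varepsilon }$ of $\mathbf{0}$ together with the uniform $O(h^{\gamma +1})$ one-step error, notes via Theorem \ref{Prop. 5.2 LLRK} that $\mathbf{0}$ is a fixed point of the LLRK map for every $h$, and then invokes Theorem 3.1 of \cite{Beyn 1987a}, which delivers exactly the conclusions of the statement (the $O(h^{\gamma })$ convergence of the graphs $p^{h},q^{h}$ and the two-sided shadowing estimate) for any one-step method satisfying these hypotheses. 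What you do instead is essentially re-prove Beyn's theorem: hyperbolicity of the discrete fixed point via $D\Psi _{h}(\mathbf{0})=e^{h\mathbf{f}_{\mathbf{x}}(\mathbf{0})}$, a Lyapunov--Perron/graph-transform argument with contraction ratio $1-O(h)$ played against a per-step perturbation $\left\Vert \Psi _{h}-S_{h}\right\Vert =O(h^{\gamma +1})$ to get $\left\Vert p^{h}-p\right\Vert _{\infty }=O(h^{\gamma })$, and a Bowen-type shadowing lemma with the same $O(h^{\gamma +1})/O(h)$ scaling. The scalings are right and you correctly identify the uniformity in $h$ of all hyperbolicity constants as the crux; but that bookkeeping is precisely the content of Beyn's paper, so your route costs several pages of analysis that the paper buys by citation, in exchange for being self-contained. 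Two small points to repair if you carry it out: the linearization-preserving theorem you lean on for $D\Psi _{h}(\mathbf{0})=e^{h\mathbf{f}_{\mathbf{x}}(\mathbf{0})}$ is stated under global boundedness of $\mathbf{f}$ and its derivatives on $\mathbb{R}^{d}$, whereas Theorem \ref{Prop. 5.3} only assumes (\ref{H1})--(\ref{H2}) on a neighborhood $\Omega $, so that identity must be localized (routine, since the underlying Taylor expansion is local; alternatively Lemma \ref{Lemma 5.4} \textit{i)} already gives $D\Psi _{h}(\mathbf{0})=\mathbf{I}+h\mathbf{f}_{\mathbf{x}}(\mathbf{0})+o(h)$, which suffices for uniform hyperbolicity for small $h$); and the quantitative comparison of $p^{h}$ with $p$ requires characterizing $p$ itself as the fixed point of the time-$h$ graph transform of the flow on the same function space, which needs to be set up explicitly.
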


\begin{proof}
Since $\Omega$ is a neighborhood of the invariant set $\mathbf{0}$, there
exists a constant $\varepsilon>0$ and a compact set $\mathcal{K}%
_{\varepsilon }=\{\mathbf{\xi}\in\mathbb{R}^{d}:\left\Vert \mathbf{\xi}%
\right\Vert \leq\varepsilon\}\subset\Omega$ such that Lemma \ref{Lemma 5.4}
holds with $\mathcal{K}=\mathcal{K}_{\varepsilon}$. Furthermore, by
assertion \textit{i)} of Theorem \ref{Prop. 5.2 LLRK}, $\mathbf{f}(\mathbf{%
\xi})=\mathbf{0}$ implies $\mathbf{\varphi}_{\gamma}(\mathbf{\xi},h)=\mathbf{%
0}$ for all $h$. Thus, the hypotheses of Theorem 3.1 in \cite{Beyn 1987a}
hold for the LLRK discretizations, which completes the prove.
\end{proof}

Theorem \ref{Prop. 5.3} shows that the phase portrait of a continuous
dynamical system near a hyperbolic equilibrium point is correctly reproduced
by LLRK discretizations for sufficiently small step-sizes. It states that
any trajectory of the dynamical system can be correctly approximated by a
trajectory of the LLRK discretization if the discrete initial value is
conveniently adjusted. It also affirms that any trajectory of a LLRK
discretization approximates some trajectory of the continuous system with a
suitably selection of the starting point. In both cases, these results are
valid for sufficiently small step-sizes and as long as the trajectories stay
within some neighborhood of the equilibrium point. Moreover, the theorem
ensures that the local stable and unstable manifolds of a LLRK
discretization at the equilibrium point converge to those of the continuous
system as the step-size goes to zero.

\subsection{Phase portraits near periodic orbits}

Suppose that the equation (\ref{ODE-SS-0a}) has a hyperbolic closed orbit $%
\Gamma =\{\overline{\mathbf{x}}(t):t\in \lbrack 0,T]\}$ of period $T$ in an
open bounded set $\Omega \subset \mathbb{R}^{d}$ . Let $\overline{\Omega }$
be the closure of $\Omega $.

\begin{theorem}
\label{Prop. 5.5} Let the assumptions (\ref{H1})-(\ref{H2}) of Lemma \ref%
{Lemma 5.4} hold on a neighborhood of $\overline{\Omega}$. Then there exist $%
h_{0}>0$ and an open neighborhood $U$ of $\Gamma$ such that the order $%
\gamma $ LLRK discretization
\begin{equation*}
\mathbf{y}_{n+1}=\mathbf{y}_{n}+h\mathbf{\varphi}_{\gamma}(\mathbf{y}_{n},h)
\end{equation*}
has an invariant closed curve $\Gamma_{h}\subset U$ for all $h\leq h_{0}$.
More precisely, there exist $T-$periodic functions $\overline{\mathbf{y}}%
_{h}:\mathbb{R}\rightarrow U$ and $\sigma_{h}-1:\mathbb{R}\rightarrow
\mathbb{R}$ for $h\leq h_{0}$, which are uniformly Lipschitz and satisfy
\begin{equation*}
\overline{\mathbf{y}}_{h}(t)+h\mathbf{\varphi}_{\gamma}(\overline{\mathbf{y}}%
_{h}(t),h)=\overline{\mathbf{y}}_{h}(\sigma_{h}(t)),\text{ }t\in\mathbb{R}
\end{equation*}
and
\begin{equation*}
\sigma_{h}(t)=t+h+O(h^{\gamma+1})\text{ uniformly for }t\in\mathbb{R}.
\end{equation*}
Furthermore, the curve $\Gamma_{h}=\{\overline{\mathbf{y}}_{h}(t):t\in
\lbrack0,T]\}$ converges to $\Gamma$ in the Lipschitz norm. In particular,
\begin{equation*}
\underset{t\in\mathbb{R}}{max}\left\Vert \overline{\mathbf{x}}(t)-\overline {%
\mathbf{y}}_{h}(t)\right\Vert =O(h^{\gamma})
\end{equation*}
and
\begin{equation*}
\underset{t_{1}\neq t_{2}}{sup}\frac{\left\Vert (\overline{\mathbf{x}}-%
\overline{\mathbf{y}}_{h})(t_{1})-(\overline{\mathbf{x}}-\overline {\mathbf{y%
}}_{h})(t_{2})\right\Vert }{\left\vert t_{1}-t_{2}\right\vert }\rightarrow0%
\text{ as }h\rightarrow0.
\end{equation*}
\end{theorem}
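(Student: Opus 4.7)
The plan is to follow the same strategy used in the proof of Theorem~\ref{Prop. 5.3} for equilibria, but invoking an analogous persistence result of Beyn for hyperbolic periodic orbits under one-step discretizations (see, e.g., Beyn's theorem on invariant closed curves for one-step methods, which is the periodic-orbit counterpart of Theorem~3.1 in \cite{Beyn 1987a}). Such a result guarantees, under the hypotheses that the increment function of the discretization is of class $\mathcal{C}^{1}$ on a neighborhood of $\Gamma$, converges in $\mathcal{C}^{1}$ to $\mathbf{f}$ as $h\to 0$, and has local truncation error of order $\gamma$, the existence of a smooth invariant closed curve $\Gamma_{h}$ together with a $T$-periodic parametrization $\overline{\mathbf{y}}_{h}$ and a time map $\sigma_{h}$ satisfying precisely the conclusions stated.

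The main work therefore consists in verifying, in a suitable neighborhood of $\Gamma$, the three hypotheses required to apply that external result. First, I would choose a compact set $\mathcal{K}$ and a bounded open set $\Omega'$ with $\Gamma\subset\mathcal{K}\subset\Omega'\subset\overline{\Omega}$ so small that the assumptions (\ref{H1})--(\ref{H2}) of Lemma~\ref{Lemma 5.4} hold on $\Omega'$; since $\Gamma$ is a compact invariant set contained in the open set $\overline{\Omega}$ where $\mathbf{f}\in\mathcal{C}^{\gamma+1}$, this is routine. Then I would invoke assertion \textit{i)} of Lemma~\ref{Lemma 5.4} to get that $\mathbf{\varphi}_{\gamma}\to\mathbf{f}$ and $\partial\mathbf{\varphi}_{\gamma}/\partial\mathbf{y}\to\mathbf{f}_{\mathbf{x}}$ uniformly on $\mathcal{K}$ as $h\to 0$, which is what ensures that the hyperbolicity of $\Gamma$ (as a periodic orbit of $\mathbf{f}$) persists for the perturbed map for small $h$. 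Finally, assertion \textit{ii)} of the same lemma supplies the uniform local truncation error bound $\|(\mathbf{x}(t_{0}+h;\mathbf{x}_{0})-\mathbf{x}_{0})/h-\mathbf{\varphi}_{\gamma}(\mathbf{x}_{0},h)\|=O(h^{\gamma})$ on $\mathcal{K}$, which is the consistency hypothesis underlying the $O(h^{\gamma})$ convergence of $\overline{\mathbf{y}}_{h}$ to $\overline{\mathbf{x}}$ and of $\sigma_{h}(t)$ to $t+h$.

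Having checked these three ingredients, the application of Beyn's theorem for closed orbits delivers in one shot: the existence of $h_{0}>0$ and of the neighborhood $U$ of $\Gamma$, the invariant curve $\Gamma_{h}=\overline{\mathbf{y}}_{h}([0,T])\subset U$, the $T$-periodicity and uniform Lipschitz character of $\overline{\mathbf{y}}_{h}$ and $\sigma_{h}$, the functional relation $\overline{\mathbf{y}}_{h}(t)+h\mathbf{\varphi}_{\gamma}(\overline{\mathbf{y}}_{h}(t),h)=\overline{\mathbf{y}}_{h}(\sigma_{h}(t))$, and the estimates $\sigma_{h}(t)=t+h+O(h^{\gamma+1})$ and $\max_{t}\|\overline{\mathbf{x}}(t)-\overline{\mathbf{y}}_{h}(t)\|=O(h^{\gamma})$. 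The Lipschitz-norm convergence (the last display of the theorem) is part of the conclusion of Beyn's result once uniform $\mathcal{C}^{1}$ consistency has been established, and does not require any separate argument.

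The main obstacle, as in Theorem~\ref{Prop. 5.3}, is not the dynamical-systems input itself (which is black-boxed from Beyn) but the verification that the \emph{uniform} $\mathcal{C}^{1}$ convergence $\mathbf{\varphi}_{\gamma}\to\mathbf{f}$ and the \emph{uniform} order-$\gamma$ consistency hold on a whole neighborhood of $\Gamma$, rather than merely along single trajectories. This is precisely what Lemma~\ref{Lemma 5.4} was designed to supply on an arbitrary compact neighborhood of an invariant set, so the proof reduces to specializing that lemma to the compact invariant set $\Gamma$ and quoting Beyn's theorem.
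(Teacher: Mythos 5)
Your proposal is correct and follows essentially the same route as the paper: verify the conclusions of Lemma~\ref{Lemma 5.4} on a compact neighborhood of $\Gamma$ and then invoke Beyn's persistence theorem for invariant closed curves under one-step methods (Theorem~2.1 of \cite{Beyn 1987b}), which delivers all the stated conclusions at once. The only hypothesis of Beyn's theorem you do not explicitly verify is the uniform-in-$h$ Lipschitz continuity of $\partial \mathbf{\varphi}_{\gamma}/\partial \mathbf{y}$ on a neighborhood of $\Gamma$; the paper obtains this from Lemmas~\ref{Lemma de phi LLT} and~\ref{Lemma for Local Error LLRK}, which give $\mathbf{\varphi}_{\gamma}\in\mathcal{C}^{2}(\overline{\Omega}\times[0,h_{0}])$, and you should add this one line to make the application of Beyn's result airtight.
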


\begin{proof}
Since Lemma \ref{Lemma 5.4} holds on a neighborhood of $\overline{\Omega }$,
it also holds on $\Omega $. In addition, Lemmas \ref{Lemma de phi LLT} and %
\ref{Lemma for Local Error LLRK} imply that $\mathbf{\varphi }_{\gamma }\in
\mathcal{C}^{2}(\overline{\Omega }\times \lbrack 0,h_{0}])$, so $\partial
\mathbf{\varphi }_{\gamma }\mathbf{/\partial y}_{n}$ is Lipschitz on $\Omega
$ uniformly in $h$. Thus, the hypotheses of Theorem 2.1 in \cite{Beyn 1987b}
hold for the LLRK discretizations of order $\gamma >2$, which completes the
proof.
\end{proof}

Theorem \ref{Prop. 5.5} affirms that, for $h$ sufficiently small, the LLRK
discretizations have a closed invariant curve $\Gamma_{h}$ , i.e., $(1+h%
\mathbf{\varphi}(.;h))(\Gamma_{h})=\Gamma_{h}$ , which converges to the
periodic orbit $\Gamma$ of the continuous system.

The next theorem deals with the behavior of the discrete trajectories of
LLRK discretizations near the invariant curve $\Gamma_{h}$ when the ODE (\ref%
{ODE-SS-0a}) has a stable periodic orbit $\Gamma$. For $\mathbf{x}_{0}$ in a
neighborhood of $\Gamma$, the notations
\begin{equation*}
W_{h}(\mathbf{x}_{0})=\{\mathbf{y}_{n}(\mathbf{x}_{0}):n\geq0\}\text{ \ and
\ \ }w(\mathbf{x}_{0})=\{\mathbf{x}(t;\mathbf{x}_{0}):t\geq0\}
\end{equation*}
will be used. In addition,
\begin{equation*}
d(A,B)=max\{\underset{\mathbf{z}\in A}{\sup}\text{ dist}(\mathbf{z},B),%
\underset{\mathbf{z}\in B}{\sup}\text{ dist}(\mathbf{z},A)\}
\end{equation*}
will denote the Hausdorff distance between two sets $A$ and $B$.\newline

\begin{theorem}
\label{Prop. 5.7}Let $\Gamma $ be a stable closed orbit of the equation (\ref%
{ODE-SS-0a}). Then, under the assumptions of Theorem \ref{Prop. 5.5}, there
exist $h_{0}$, $\alpha $, $\beta $, $C$ and $\rho >0$ such that for $h\leq
h_{0}$ and $dist(\mathbf{x}_{0},\Gamma _{h})\leq \rho $ the following holds:
\begin{equation*}
dist(\mathbf{y}_{n}(\mathbf{x}_{0}),\Gamma _{h})\leq C\text{ }\exp (-\alpha
t_{n})\text{ }dist(\mathbf{x}_{0},\Gamma _{h})
\end{equation*}%
and%
\begin{equation*}
dist(\mathbf{y}_{n}(\mathbf{x}_{0}),w(\mathbf{x}_{0}))\leq C(h^{\gamma
}+min\{h^{\gamma }\exp (\beta t_{n}),\exp (-\alpha t_{n})\})
\end{equation*}%
for $n\geq 0$. Moreover, for any $\delta >0$ there exist $\rho (\delta )$, $%
h(\delta )>0$ such that
\begin{equation*}
\underset{n\geq 0}{sup}\left\{ dist(\mathbf{y}_{n}(\mathbf{x}_{0}),w(\mathbf{%
x}_{0}))\right\} \leq Ch^{\gamma -\delta }
\end{equation*}%
for $h\leq h(\delta )$ and $dist(\mathbf{x}_{0},\Gamma _{h})\leq \rho
(\delta )$. Finally,
\begin{equation*}
d(W_{h}(\mathbf{x}_{0}),w(\mathbf{x}_{0}))\rightarrow 0\text{ as }%
h\rightarrow 0
\end{equation*}%
uniformly for $dist(\mathbf{x}_{0},\Gamma )\leq \rho $.
\end{theorem}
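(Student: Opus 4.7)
The plan is to follow the same strategy used to establish Theorems \ref{Prop. 5.3} and \ref{Prop. 5.5}: reduce the statement to a known stability result for one-step discretizations near a stable periodic orbit, namely an analogue of Theorem 2.2 (or the corresponding stability theorem) in \cite{Beyn 1987b}, and verify its hypotheses for the LLRK map $\mathbf{\Psi}_h(\mathbf{y}) = \mathbf{y} + h\mathbf{\varphi}_\gamma(\mathbf{y},h)$. The existence of the invariant closed curve $\Gamma_h$ with the required approximation order is already furnished by Theorem \ref{Prop. 5.5}, so the main task is to check the smoothness and consistency estimates for $\mathbf{\varphi}_\gamma$ on a neighborhood of $\Gamma$ and then quote Beyn's result.

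First I would fix a bounded open neighborhood of $\overline{\Omega}$ on which (\ref{H1})--(\ref{H2}) hold and $\mathbf{f}\in\mathcal{C}^{\gamma+1}$. By Lemma \ref{Lemma de phi LLT} the map $\mathbf{\varphi}(\cdot;h) = h^{-1}\mathbf{\phi}(\cdot;h)$ is $\mathcal{C}^{\gamma}$ in the state variable uniformly in $h\in[0,h_0]$, and by Lemma \ref{Lemma for Local Error LLRK} the same regularity is inherited by the RK increments $\mathbf{k}_i$, and hence by $\mathbf{\varphi}_\gamma$. In particular $\mathbf{\varphi}_\gamma\in\mathcal{C}^{2}(\overline{\Omega}\times[0,h_0])$ with Lipschitz first derivative, uniformly in $h$, which is the regularity assumption of \cite{Beyn 1987b}. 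Second, assertion \textit{ii)} of Lemma \ref{Lemma 5.4} supplies the uniform local-error bound
\begin{equation*}
\left\Vert (\mathbf{x}(t_0+h;\mathbf{x}_0)-\mathbf{x}_0)/h - \mathbf{\varphi}_\gamma(\mathbf{x}_0,h)\right\Vert = O(h^\gamma)
\end{equation*}
on any compact subset of $\Omega$, which is the consistency input needed to transfer the exponential contraction of the continuous flow near $\Gamma$ to the discrete map $\mathbf{\Psi}_h$.

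Next I would use hyperbolicity and stability of $\Gamma$: the nontrivial Floquet multipliers of the variational equation along $\overline{\mathbf{x}}(t)$ all lie strictly inside the unit disc. By the uniform $\mathcal{C}^2$-closeness of $\mathbf{\varphi}_\gamma(\cdot,h)$ to $\mathbf{f}$ (Lemma \ref{Lemma 5.4} \textit{i)}) and uniform Lipschitz continuity of the derivatives, the multipliers of the linearization of $\mathbf{\Psi}_h$ around the discrete invariant curve $\Gamma_h$ are $O(h^\gamma)$-perturbations of those of the time-$h$ map along $\Gamma$. Hence, for $h\leq h_0$ sufficiently small, $\Gamma_h$ is an attracting invariant curve with a uniform spectral gap, producing constants $\alpha,\beta>0$ independent of $h$. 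The first estimate, $\mathrm{dist}(\mathbf{y}_n(\mathbf{x}_0),\Gamma_h)\leq C\exp(-\alpha t_n)\,\mathrm{dist}(\mathbf{x}_0,\Gamma_h)$, then follows by a standard contraction argument in a tubular neighborhood of $\Gamma_h$, exactly as in Theorem 2.2 of \cite{Beyn 1987b}.

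For the second and third estimates I would combine this exponential contraction to $\Gamma_h$ with the global error between $\mathbf{y}_n$ and $\mathbf{x}(t_n;\mathbf{x}_0)$ provided by Theorem \ref{Local Error LLRK}: for $t_n\leq Th$ with small $T$ one uses the $O(h^\gamma\exp(\beta t_n))$ Gronwall-type estimate, while for large $t_n$ one exploits exponential attraction to the invariant sets so that both $\mathbf{y}_n$ and the orbit $w(\mathbf{x}_0)$ lie within $O(h^\gamma)+O(\exp(-\alpha t_n))$ of $\Gamma_h\approx\Gamma$; taking the minimum gives the claimed bound. The sharper bound $Ch^{\gamma-\delta}$ is obtained by choosing the switch-over time as $t_n\asymp(\delta/\beta)|\log h|$. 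Finally, the Hausdorff convergence $d(W_h(\mathbf{x}_0),w(\mathbf{x}_0))\to 0$ follows because both sets are asymptotic to $\Gamma_h$ and $\Gamma$ respectively, which are $O(h^\gamma)$-close by Theorem \ref{Prop. 5.5}, together with the transient approximation just established on any finite interval.

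The main obstacle I expect is the uniform-in-$h$ spectral perturbation argument at $\Gamma_h$: one must show that the Floquet multipliers of the discrete linearization inherit hyperbolicity from the continuous variational problem uniformly as $h\to 0$. This requires not only $\mathbf{\varphi}_\gamma\to\mathbf{f}$ but the uniform $\mathcal{C}^1$-convergence of $\partial\mathbf{\varphi}_\gamma/\partial\mathbf{y}$ furnished by Lemma \ref{Lemma 5.4} \textit{i)} and the Lipschitz regularity from Lemmas \ref{Lemma de phi LLT}--\ref{Lemma for Local Error LLRK}. Once that is in hand, the remaining steps are essentially mechanical consequences of \cite{Beyn 1987b} applied to the LLRK one-step map.
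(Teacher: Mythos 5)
Your proposal follows essentially the same route as the paper: the paper's proof is a one-line reduction to the stability theorem for one-step maps near attracting invariant curves in \cite{Beyn 1987b} (Theorem 3.2 there, rather than the Theorem 2.2 you name), with the hypotheses already verified in the proof of Theorem \ref{Prop. 5.5} via Lemmas \ref{Lemma de phi LLT}, \ref{Lemma for Local Error LLRK} and \ref{Lemma 5.4}. Your additional sketch of how Beyn's result itself works (Floquet multiplier perturbation, contraction in a tubular neighborhood, the $t_n\asymp|\log h|$ switch-over) is consistent with that reference but is not needed beyond citing it, which is all the paper does.
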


\begin{proof}
It can be proved in a similar way as Theorem \ref{Prop. 5.5}, but using
Theorem 3.2 in \cite{Beyn 1987b} instead of Theorem 2.1.
\end{proof}

This theorem states the stability of the invariant curve $\Gamma_{h}$ and
the convergence of the trajectories of a LLRK discretization to the
continuous trajectories of the underlying ODE when the discretization starts
at a point close enough to the stable periodic orbit $\Gamma$.

\section{A-stable explicit LLRK schemes\label{ODE LL Schemes}}

This section deals with practical issues of the LLRK methods, that is, with
the so called Local Linearization - Runge Kutta schemes.

Roughly speaking, every numerical implementation of a LLRK discretization
will be called LLRK scheme. More precisely, they are defined as follows.

\begin{definition}
\label{definition LLS} For an order $\gamma$ LLRK discretization
\begin{equation}
\mathbf{y}_{n+1}=\mathbf{y}_{n}+h_{n}\mathbf{\varphi}_{\gamma}(t_{n},\mathbf{%
y}_{n};h_{n}),  \label{ODE-LLS-22}
\end{equation}
as defined in (\ref{LLRK_Discretizat}), any recursion of the form
\begin{equation*}
\widetilde{\mathbf{y}}_{n+1}=\widetilde{\mathbf{y}}_{n}+h_{n}\widetilde {%
\mathbf{\varphi}}_{\gamma}\left( t_{n},\widetilde{\mathbf{y}}%
_{n};h_{n}\right) \mathbf{,}\text{ \ \ \ \ \ \ \ \ \ with }\widetilde{%
\mathbf{y}}_{0}=\mathbf{y}_{0},
\end{equation*}
where $\widetilde{\mathbf{\varphi}}_{\gamma}$ denotes some numerical
algorithm to compute $\mathbf{\varphi}_{\gamma}$, is called an LLRK scheme.
\end{definition}

When implementing the LLRK discretization (\ref{ODE-LLS-22}), that is, when
a LLRK scheme is constructed, the required evaluations of the expression $%
\mathbf{y}_{n}+\mathbf{\phi}\left( t_{n},\mathbf{y}_{n};.\right) $ at $%
t_{n+1}-t_{n}$ and $c_{i}\left( t_{n+1}-t_{n}\right) $ may be computed by
different algorithms. In \cite{de la Cruz 07}, \cite{Jimenez05 AMC} a number
of them were reviewed, which yield the following two basic kinds of LLRK
schemes:

\begin{equation*}
\widetilde{\mathbf{y}}_{n+1}=\widetilde{\mathbf{y}}_{n}+\widetilde{\mathbf{%
\phi }}\left( t_{n},\widetilde{\mathbf{y}}_{n};h_{n}\right) +\widetilde{%
\mathbf{\rho }}\left( t_{n},\widetilde{\mathbf{y}}_{n};h_{n}\right) \mathbf{,%
}
\end{equation*}%
and%
\begin{equation*}
\widetilde{\mathbf{y}}_{n+1}=\widetilde{\mathbf{z}}\left( t_{n}+h_{n};t_{n},%
\widetilde{\mathbf{y}}_{n}\right) +\widetilde{\mathbf{\rho }}\left( t_{n},%
\widetilde{\mathbf{y}}_{n};h_{n}\right) \mathbf{,}
\end{equation*}%
where $\widetilde{\mathbf{\phi }}$ is a\ numerical implementation of $%
\mathbf{\phi }$,\ $\widetilde{\mathbf{z}}$ is a numerical solution of the
linear ODE
\begin{align}
\frac{d\mathbf{z}\left( t\right) }{dt}& =\mathbf{B}_{n}\mathbf{z}(t)+\mathbf{%
b}_{n}\left( t\right) \mathbf{,}\text{ \ \ }t\in \lbrack t_{n},t_{n+1}],
\label{ODE-LLS-13} \\
\mathbf{z}\left( t_{n}\right) & =\widetilde{\mathbf{y}}_{n}\text{,}
\label{ODE-LLS-13b}
\end{align}%
and $\widetilde{\mathbf{\rho }}~$is the map of the Runge-Kutta scheme
applied to the ODE
\begin{align}
\frac{d\mathbf{v}\left( t\right) }{dt}& =\widetilde{\mathbf{q}}\mathbf{(}%
t_{n},\mathbf{z}\left( t_{n}\right) ;t\mathbf{,\mathbf{v}}\left( t\right)
\mathbf{),}\text{ \ \ }t\in \lbrack t_{n},t_{n+1}],\quad  \label{ODE-LLS-14}
\\
\mathbf{v}\left( t_{n}\right) & =\mathbf{0},  \label{ODE-LLS-14b}
\end{align}%
with vector field
\begin{align*}
\widetilde{\mathbf{q}}\mathbf{(}t_{n},\widetilde{\mathbf{y}}_{n};s\mathbf{%
,\xi )}& =\mathbf{f(}s,\widetilde{\mathbf{y}}_{n}+\widetilde{\mathbf{\phi }}%
\left( t_{n},\widetilde{\mathbf{y}}_{n};s-t_{n}\right) +\mathbf{\xi })-%
\mathbf{f}_{\mathbf{x}}(t_{n},\widetilde{\mathbf{y}}_{n})\widetilde{\mathbf{%
\phi }}\left( t_{n},\widetilde{\mathbf{y}}_{n};s-t_{n}\right) \\
& -\mathbf{f}_{t}\left( t_{n},\widetilde{\mathbf{y}}_{n}\right) (s-t_{n})-%
\mathbf{f}\left( t_{n},\widetilde{\mathbf{y}}_{n}\right) ,
\end{align*}%
for the first kind of LLRK scheme, or%
\begin{align*}
\widetilde{\mathbf{q}}\mathbf{(}t_{n},\widetilde{\mathbf{y}}_{n};s\mathbf{%
,\xi )}& =\mathbf{f(}s,\widetilde{\mathbf{z}}\left( s;t_{n},\widetilde{%
\mathbf{y}}_{n}\right) +\mathbf{\xi })-\mathbf{f}_{\mathbf{x}}(t_{n},%
\widetilde{\mathbf{y}}_{n})(\widetilde{\mathbf{z}}\left( s;t_{n},\widetilde{%
\mathbf{y}}_{n}\right) -\widetilde{\mathbf{y}}_{n})-\mathbf{f}_{t}\left(
t_{n},\widetilde{\mathbf{y}}_{n}\right) (s-t_{n}) \\
& -\mathbf{f}\left( t_{n},\widetilde{\mathbf{y}}_{n}\right)
\end{align*}%
for the second one. In the equation (\ref{ODE-LLS-13}), $\mathbf{B}_{n}=%
\mathbf{f}_{\mathbf{x}}\left( t_{n},\widetilde{\mathbf{y}}_{n}\right) $ is a
$d\times d$ constant matrix and $\mathbf{b}_{n}(t)=\mathbf{f}_{t}\left(
t_{n},\widetilde{\mathbf{y}}_{n}\right) (t-t_{n})+\mathbf{f}\left( t_{n},%
\widetilde{\mathbf{y}}_{n}\right) \mathbf{-B}_{n}\widetilde{\mathbf{y}}_{n}$
is a $d$-dimensional linear vector function.

Obviously, a LLRK scheme will preserve the order $\gamma$ of the underlaying
LLRK discretization only if $\widetilde{\mathbf{\phi}}$ is a suitable
approximation to $\mathbf{\phi}$. This requirement is considered in the next
theorem.

\begin{theorem}
\label{Conv LLRKScheme}Let $\mathbf{x}$ be the solution of the ODE (\ref%
{ODE-LLA-1})-(\ref{ODE-LLA-2}) with vector field $\mathbf{f}$ satisfying the
condition (\ref{ODE-CONV-8}). With $t_{n},$ $t_{n+1}\in\left( t\right) _{h}$%
, let $\widetilde{\mathbf{z}}_{n+1}=\widetilde{\mathbf{z}}_{n}+h_{n}{\Lambda}%
_{1}\left( t_{n},\widetilde{\mathbf{z}}_{n};h_{n}\right) $ and $\widetilde{%
\mathbf{v}}_{n+1}=\widetilde{\mathbf{v}}_{n}+h_{n}{\Lambda }_{2}^{\widetilde{%
\mathbf{z}}_{n}}\left( t_{n},\widetilde{\mathbf{v}}_{n};h_{n}\right) $ be
one-step explicit integrators of the ODEs (\ref{ODE-LLS-13})-(\ref%
{ODE-LLS-13b}) and (\ref{ODE-LLS-14})-(\ref{ODE-LLS-14b}), respectively.
Suppose that these integrators have order of convergence $r$ and $p$,
respectively. Further, assume that ${\Lambda}_{1}$ and ${\Lambda}_{2}^{%
\widetilde{\mathbf{z}}_{n}}$ fulfill the local Lipschitz condition (\ref%
{Lipschitz}). Then, for $h$ small enough, the numerical scheme%
\begin{equation*}
\widetilde{\mathbf{y}}_{n+1}=\widetilde{\mathbf{y}}_{n}+h_{n}{\Lambda}%
_{1}\left( t_{n},\widetilde{\mathbf{y}}_{n};h_{n}\right) +h_{n}{\Lambda}%
_{2}^{\widetilde{\mathbf{y}}_{n}}\left( t_{n},\mathbf{0};h_{n}\right)
\end{equation*}
satisfies that
\begin{equation*}
\left\Vert \mathbf{x}(t_{n+1})-\widetilde{\mathbf{y}}_{n+1}\right\Vert \leq
Ch^{\min\{r,p\}}
\end{equation*}
for all $t_{n+1}\in\left( t\right) _{h}$, where $C$ is a positive constant.
\end{theorem}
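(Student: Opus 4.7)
The plan is to adapt the local-to-global argument of Lemma \ref{LLRK local error gen} to the fully discrete scheme. Define the combined scheme increment
\begin{equation*}
\widetilde{\mathbf{\varphi}}(t,\mathbf{\xi};h)=\mathbf{\Lambda}_{1}(t,\mathbf{\xi};h)+\mathbf{\Lambda}_{2}^{\mathbf{\xi}}(t,\mathbf{0};h),
\end{equation*}
so that the scheme reads $\widetilde{\mathbf{y}}_{n+1}=\widetilde{\mathbf{y}}_{n}+h_{n}\widetilde{\mathbf{\varphi}}(t_{n},\widetilde{\mathbf{y}}_{n};h_{n})$. The target is a local truncation error bound $L_{n+1}\leq Ch^{\min\{r,p\}+1}$, after which the compact-tube argument of Lemma \ref{LLRK local error gen} (Theorem 3.6 of \cite{Hairer-Wanner93}) yields the claimed global estimate.

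I would start the local analysis from the exact splitting
\begin{equation*}
\mathbf{x}(t_{n+1})=\mathbf{x}(t_{n})+\mathbf{\phi}(t_{n},\mathbf{x}(t_{n});h_{n})+\mathbf{r}(t_{n+1};t_{n},\mathbf{x}(t_{n}))
\end{equation*}
established in Section 2, and decompose $L_{n+1}$ via the triangle inequality into a \emph{linear part} and a \emph{remainder part}. The linear part error $\left\Vert \mathbf{\phi}(t_{n},\mathbf{x}(t_{n});h_{n})-h_{n}\mathbf{\Lambda}_{1}(t_{n},\mathbf{x}(t_{n});h_{n})\right\Vert $ is immediately $O(h^{r+1})$, because $\mathbf{\Lambda}_{1}$ is an order-$r$ integrator of the linear ODE (\ref{ODE-LLS-13})-(\ref{ODE-LLS-13b}), whose exact one-step flow starting from $\mathbf{x}(t_{n})$ is precisely $\mathbf{x}(t_{n})+\mathbf{\phi}(t_{n},\mathbf{x}(t_{n});h_{n})$. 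For the remainder part $\left\Vert \mathbf{r}(t_{n+1};t_{n},\mathbf{x}(t_{n}))-h_{n}\mathbf{\Lambda}_{2}^{\mathbf{x}(t_{n})}(t_{n},\mathbf{0};h_{n})\right\Vert $, I would insert the exact solution $\widetilde{\mathbf{r}}$ of the implemented auxiliary ODE (\ref{ODE-LLS-14})-(\ref{ODE-LLS-14b}): the order-$p$ hypothesis on $\mathbf{\Lambda}_{2}$ gives $\left\Vert \widetilde{\mathbf{r}}-h_{n}\mathbf{\Lambda}_{2}^{\mathbf{x}(t_{n})}(t_{n},\mathbf{0};h_{n})\right\Vert =O(h^{p+1})$, while a Gronwall argument applied to $\mathbf{r}-\widetilde{\mathbf{r}}$, driven by the vector-field perturbation $\mathbf{q}-\widetilde{\mathbf{q}}$ (itself controlled by the $O(h^{r+1})$ accuracy of $\mathbf{\Lambda}_{1}$ together with the Lipschitz bounds of Lemma \ref{Lemma for Local Error LLRK}), gives $\left\Vert \mathbf{r}-\widetilde{\mathbf{r}}\right\Vert =O(h^{r+1})$. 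Combining these pieces produces the local bound $L_{n+1}\leq Ch^{\min\{r,p\}+1}$.

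For the globalization, the hypothesized Lipschitz property (\ref{Lipschitz}) of $\mathbf{\Lambda}_{1}$ and $\mathbf{\Lambda}_{2}^{\mathbf{\xi}}$, combined with Lemmas \ref{Lemma de phi LLT} and \ref{Lemma for Local Error LLRK}, makes $\widetilde{\mathbf{\varphi}}$ locally Lipschitz in its state argument uniformly for small $h$. Repeating the compact-tube construction from the proof of Lemma \ref{LLRK local error gen} converts this into a uniform Lipschitz constant along the exact trajectory $\mathcal{X}=\{\mathbf{x}(t):t\in [t_{0},T]\}$, and a direct invocation of Theorem 3.6 of \cite{Hairer-Wanner93}, together with the local bound above and $h$ small enough to keep $\widetilde{\mathbf{y}}_{n}$ inside the tube by induction, produces $\left\Vert \mathbf{x}(t_{n+1})-\widetilde{\mathbf{y}}_{n+1}\right\Vert \leq Ch^{\min\{r,p\}}$.

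The principal obstacle is the perturbation step in the remainder analysis: the vector field $\widetilde{\mathbf{q}}$ seen by $\mathbf{\Lambda}_{2}$ differs from the exact $\mathbf{q}$ through the approximate linear flow $\widetilde{\mathbf{\phi}}$ or $\widetilde{\mathbf{z}}$, not merely at the step endpoint $t_{n}+h_{n}$ but throughout the sub-step stages of the underlying RK method. Making the $O(h^{r+1})$ bound for $\mathbf{q}-\widetilde{\mathbf{q}}$ uniform on $[t_{n},t_{n+1}]$, rather than only pointwise at the endpoint, is the delicate point and may require a case-by-case argument depending on whether the LLRK scheme is of the first kind (closed-form $\widetilde{\mathbf{\phi}}$) or of the second kind (numerical $\widetilde{\mathbf{z}}$ with its own sub-step accuracy).
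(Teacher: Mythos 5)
Your proposal follows essentially the same route as the paper's proof: the same three-term triangle-inequality decomposition of the local truncation error (the $O(h^{r+1})$ linear-part error of $\mathbf{\Lambda}_{1}$, the exact-versus-perturbed remainder comparison via the fundamental lemma/Gronwall with the perturbation controlled by the accuracy of $\mathbf{\Lambda}_{1}$, and the $O(h^{p+1})$ error of $\mathbf{\Lambda}_{2}$ on the implemented auxiliary ODE), followed by the same compact-tube globalization via Theorem 3.6 of \cite{Hairer-Wanner93}. The ``obstacle'' you flag at the end --- that the bound on $\mathbf{q}-\widetilde{\mathbf{q}}$ must hold uniformly over $[t_{n},t_{n+1}]$ rather than only at the step endpoint --- is a genuine subtlety that the paper's own proof passes over silently, so your treatment is, if anything, more careful than the published one.
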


\begin{proof}
Let $\mathcal{X}=\left\{ \mathbf{x}\left( t\right) :t\in\left[ t_{0},T\right]
\right\} .$ Since $\mathcal{X}$ is a compact set contained in the open set $%
\mathcal{D}\subset\mathbb{R}^{d}$, there exists $\varepsilon>0$ such that
the compact set
\begin{equation*}
\mathcal{A}_{\varepsilon}=\left\{ \xi\in\mathbb{R}^{d}:\underset{\mathbf{x}%
\left( t\right) \in\mathcal{X}}{\min}\left\Vert \xi -\mathbf{x}\left(
t\right) \right\Vert \leq\varepsilon\right\}
\end{equation*}
is contained in $\mathcal{D}$.

First, set $\widetilde{\mathbf{y}}_{n}=\mathbf{x}(t_{n})$ in the equations (%
\ref{ODE-LLS-13})-(\ref{ODE-LLS-13b}) and (\ref{ODE-LLS-14})-(\ref%
{ODE-LLS-14b}). Since $\mathbf{x}\left( t_{n+1}\right) =\mathbf{y}%
_{LL}\left( t_{n}+h_{n};t_{n},\mathbf{x}(t_{n})\right) +\mathbf{r}\left(
t_{n}+h_{n};t_{n},\mathbf{x}(t_{n})\right) $, it is obtained that
\begin{align}
\left\Vert
\begin{array}{c}
\mathbf{x}(t_{n+1})-\mathbf{x}(t_{n})-h_{n}{\Lambda }_{1}(t_{n},\mathbf{x}%
(t_{n});h_{n}) \\
-h_{n}{\Lambda }_{2}^{\mathbf{x}(t_{n})}(t_{n},\mathbf{0};h_{n})%
\end{array}%
\right\Vert & \leq \left\Vert \mathbf{\phi }\left( t_{n},\mathbf{x}%
(t_{n});h_{n}\right) -h_{n}{\Lambda }_{1}(t_{n},\mathbf{x}%
(t_{n});h_{n})\right\Vert  \notag \\
& +\left\Vert \mathbf{r}\left( t_{n}+h_{n};t_{n},\mathbf{x}(t_{n})\right) -%
\mathbf{v}(t_{n+1})\right\Vert  \notag \\
& +\left\Vert \mathbf{v}(t_{n+1})-h_{n}{\Lambda }_{2}^{\mathbf{x}%
(t_{n})}(t_{n},\mathbf{0};h_{n})\right\Vert ,  \label{ODE-LLS-15}
\end{align}%
where $\mathbf{v}(t_{n+1})$ is the solution of equation (\ref{ODE-LLS-14})-(%
\ref{ODE-LLS-14b}) at $t=t_{n+1}$.

By definition, $\mathbf{r}\left( t_{n}+h_{n};t_{n},\mathbf{x}(t_{n})\right) $
is solution of the differential equation%
\begin{align*}
\frac{d\mathbf{u}\left( t\right) }{dt}& =\mathbf{q(}t_{n},\mathbf{x}(t_{n});t%
\mathbf{,\mathbf{u}}\left( t\right) \mathbf{),}\text{ \ \ }t\in \lbrack
t_{n},t_{n+1}],\quad \\
\mathbf{u}\left( t_{n}\right) & =\mathbf{0},
\end{align*}%
evaluated at $t=t_{n+1}$. Thus, by applying the "fundamental lemma" (see,
e.g., Theorem 10.2 in \cite{Hairer-Wanner93}), it is obtained that
\begin{equation}
\left\Vert \mathbf{r}\left( t;t_{n},\mathbf{x}(t_{n})\right) -\mathbf{v}%
(t)\right\Vert \leq \frac{\epsilon }{P}(e^{P(t-t_{n})}-1)  \label{ODE-LLS-16}
\end{equation}%
for $t\in \lbrack t_{n},t_{n+1}]$, where
\begin{align*}
\epsilon & =\underset{t\in \lbrack t_{n},t_{n+1}]}{sup}\left\Vert \mathbf{q(}%
t_{n},\mathbf{x}(t_{n});t\mathbf{,\mathbf{u}}\left( t\right) \mathbf{)}-%
\widetilde{\mathbf{q}}\mathbf{(}t_{n},\mathbf{x}(t_{n});t\mathbf{,\mathbf{u}}%
\left( t\right) \mathbf{)}\right\Vert \\
& \leq M\left\Vert \mathbf{\phi }\left( t_{n},\mathbf{x}(t_{n});h_{n}\right)
-h_{n}{\Lambda }_{1}(t_{n},\mathbf{x}(t_{n});h_{n})\right\Vert ,
\end{align*}%
$M=\underset{t\in \lbrack t_{0},T],\xi \in \mathcal{A}_{\varepsilon }}{2\sup
}\left\Vert \mathbf{f}_{\mathbf{x}}(t,\xi )\right\Vert $, and $P$ is the
Lipschitz constant of the function $\mathbf{q(}t_{n},\mathbf{x}(t_{n});\cdot
\mathbf{)}$ (which exists by Lemma \ref{Lemma for Local Error LLRK}).

Furthermore,
\begin{equation}
\left\Vert \mathbf{\phi }\left( t_{n},\mathbf{x}(t_{n});h_{n}\right) -h_{n}{%
\Lambda }_{1}(t_{n},\mathbf{x}(t_{n});h_{n})\right\Vert =\left\Vert \mathbf{z%
}\left( t_{n+1}\right) -\mathbf{z}\left( t_{n}\right) -h_{n}{\Lambda }%
_{1}(t_{n},\mathbf{z}\left( t_{n}\right) ;h_{n})\right\Vert ,
\label{ODE-LLS-17}
\end{equation}%
since $\mathbf{z}\left( t_{n+1}\right) =\mathbf{x}(t_{n})+\mathbf{\phi }%
\left( t_{n},\mathbf{x}(t_{n});h_{n}\right) $ is the solution (\ref%
{ODE-LLS-13})-(\ref{ODE-LLS-13b}) with $\widetilde{\mathbf{y}}_{n}=\mathbf{x}%
(t_{n})$ at $t=t_{n+1}$. On the other hand,%
\begin{equation}
\left\Vert \mathbf{z}\left( t_{n+1}\right) -\mathbf{z}\left( t_{n}\right)
-h_{n}{\Lambda }_{1}(t_{n},\mathbf{z}\left( t_{n}\right) ;h_{n})\right\Vert
\leq c_{1}h^{r+1}  \label{ODE-LLS-18}
\end{equation}%
and
\begin{equation}
\left\Vert \mathbf{v}(t_{n+1})-h_{n}{\Lambda }_{2}^{\mathbf{x}(t_{n})}(t_{n},%
\mathbf{0};h_{n})\right\Vert \leq c_{2}h^{p+1}  \label{ODE-LLS-19}
\end{equation}%
hold, since $\widetilde{\mathbf{z}}_{n+1}=\widetilde{\mathbf{z}}_{n}+h_{n}{%
\Lambda }_{1}\left( t_{n},\widetilde{\mathbf{z}}_{n};h_{n}\right) $ and $%
\widetilde{\mathbf{v}}_{n+1}=\widetilde{\mathbf{v}}_{n}+h_{n}{\Lambda }_{2}^{%
\widetilde{\mathbf{z}}_{n}}\left( t_{n},\widetilde{\mathbf{v}}%
_{n};h_{n}\right) $ are order $r$ and $p$ integrators, respectively. Here, $%
c_{1}$ and $c_{2}$ are positive constants independent of $h$.

From the inequalities (\ref{ODE-LLS-15})-(\ref{ODE-LLS-19}), the one-step
integrator%
\begin{equation*}
\widetilde{\mathbf{y}}_{n+1}=\widetilde{\mathbf{y}}_{n}+h_{n}{\Lambda }%
_{1}\left( t_{n},\widetilde{\mathbf{y}}_{n};h_{n}\right) +h_{n}{\Lambda }%
_{2}^{\widetilde{\mathbf{y}}_{n}}\left( t_{n},\mathbf{0};h_{n}\right)
\end{equation*}%
has local truncation error
\begin{equation*}
\left\Vert \mathbf{x}(t_{n+1})-\mathbf{x}(t_{n})-h_{n}{\Lambda }_{1}(t_{n},%
\mathbf{x}(t_{n});h_{n})-h_{n}{\Lambda }_{2}^{\mathbf{x}(t_{n})}(t_{n},%
\mathbf{0};h_{n})\right\Vert \leq c\text{ }h^{\min \{r,p\}+1},
\end{equation*}%
where $c=c_{1}+c_{2}+c_{1}M(e^{P}-1)/P$ is a positive constant. In addition,
since ${\Lambda }_{1}+$ ${\Lambda }_{2}^{\mathbf{x}(t_{n})}$ with fixed $%
t_{n},h_{n}$ is a local Lipschitz function on $\mathcal{D}$, Lemma 2 in \cite%
{Perko01} (pp. 92) implies that ${\Lambda }_{1}+$ ${\Lambda }_{2}^{\mathbf{x}%
(t_{n})}$ is a Lipschitz function on $\mathcal{A}_{\varepsilon }\subset
\mathcal{D}$. Thus, the stated estimate $\left\Vert \mathbf{x}(t_{n+1})-%
\widetilde{\mathbf{y}}_{n+1}\right\Vert \leq Ch^{\min \{r,p\}}$ for the
global error of the LLRK scheme $\widetilde{\mathbf{y}}_{n+1}$
straightforwardly follows from Theorem 3.6 in \cite{Hairer-Wanner93}, where $%
C$ is a positive contant. Finally, in order to guarantee that $\mathbf{y}%
_{n+1}\in \mathcal{A}_{\varepsilon }$ for all $n=0,...,N-1,$ and so that the
LLRK scheme is well-defined, it is sufficient that $0<h<\delta $, where $%
\delta $ is chosen in such a way that $C\delta ^{\min \{r,p\}}\leq
\varepsilon $.
\end{proof}

As an example, consider the computation of the function $\mathbf{\phi}$
through a Pad\'{e} approximation combined with the\ "scaling and squaring"
strategy for exponential matrices \cite{Golub 1989}. To do so, note that $%
\mathbf{\phi}$ can be written as \cite{Jimenez02 AML}, \cite{Jimenez05 AMC}%
\begin{equation*}
\mathbf{\phi}\left( t_{n},\widetilde{\mathbf{y}}_{n};h_{n}\right) =\mathbf{L}%
e^{\widetilde{\mathbf{D}}_{n}h_{n}}\mathbf{r,}
\end{equation*}
where
\begin{equation*}
\widetilde{\mathbf{D}}_{n}=\left[
\begin{array}{ccc}
\mathbf{f}_{\mathbf{x}}(t_{n},\widetilde{\mathbf{y}}_{n}) & \mathbf{f}%
_{t}(t_{n},\widetilde{\mathbf{y}}_{n}) & \mathbf{f}(t_{n},\widetilde {%
\mathbf{y}}_{n}) \\
0 & 0 & 1 \\
0 & 0 & 0%
\end{array}
\right] \in\mathbb{R}^{(d+2)\times(d+2)},
\end{equation*}
$\mathbf{L}=\left[
\begin{array}{ll}
\mathbf{I}_{d} & \mathbf{0}_{d\times2}%
\end{array}
\right] $ and $\mathbf{r}^{\intercal}=\left[
\begin{array}{ll}
\mathbf{0}_{1\times(d+1)} & 1%
\end{array}
\right] $ in case of non-autonomous ODEs; and
\begin{equation*}
\widetilde{\mathbf{D}}_{n}=\left[
\begin{array}{cc}
\mathbf{f}_{\mathbf{x}}(\widetilde{\mathbf{y}}_{n}) & \mathbf{f}(\widetilde{%
\mathbf{y}}_{n}) \\
0 & 0%
\end{array}
\right] \in\mathbb{R}^{(d+1)\times(d+1)},
\end{equation*}
$\mathbf{L}=\left[
\begin{array}{ll}
\mathbf{I}_{d} & \mathbf{0}_{d\times1}%
\end{array}
\right] $ and $\mathbf{r}^{\intercal}=\left[
\begin{array}{ll}
\mathbf{0}_{1\times d} & 1%
\end{array}
\right] $ for autonomous equations.

\begin{proposition}
Set $\widetilde{\mathbf{\phi }}\left( t_{n},\widetilde{\mathbf{y}}%
_{n};h_{n}\right) =\mathbf{L}$ $(\mathbf{P}_{p,q}(2^{-\kappa _{n}}\widetilde{%
\mathbf{D}}_{n}h_{n}))^{2^{\kappa _{n}}}$ $\mathbf{r}$, where $\mathbf{P}%
_{p,q}(2^{-\kappa _{n}}\widetilde{\mathbf{D}}_{n}h_{n})$ is the $(p,q)$-Pad%
\'{e} approximation of $e^{2^{-\kappa _{n}}\widetilde{\mathbf{D}}_{n}h_{n}}$%
, $\kappa _{n}$ is the smallest integer number such that $\left\Vert
2^{-\kappa _{n}}\widetilde{\mathbf{D}}_{n}h_{n}\right\Vert \leq \frac{1}{2}$%
, and the matrices $\widetilde{\mathbf{D}}_{n}$,$\mathbf{L}$\textbf{,} $%
\mathbf{r}$ are defined as above. Further, let $\widetilde{\mathbf{\rho }}~$%
be the numerical solution of the ODE (\ref{ODE-LLS-14})-(\ref{ODE-LLS-14b})
given by an order $\gamma $ explicit Runge-Kutta scheme. Then, under the
assumptions of Theorem \ref{Local Error LLRK}, the global error of the LLRK
scheme%
\begin{equation}
\widetilde{\mathbf{y}}_{n+1}=\widetilde{\mathbf{y}}_{n}+\widetilde{\mathbf{%
\phi }}\left( t_{n},\widetilde{\mathbf{y}}_{n};h_{n}\right) +\widetilde{%
\mathbf{\rho }}\left( t_{n},\widetilde{\mathbf{y}}_{n};h_{n}\right)
\label{LLRK scheme}
\end{equation}%
for the integration of the ODE (\ref{ODE-LLA-1})-(\ref{ODE-LLA-2}) is given
by
\begin{equation*}
\left\Vert \mathbf{x}(t_{n})-\widetilde{\mathbf{y}}_{n}\right\Vert \leq
Mh^{\min \{\gamma ,p+q\}}
\end{equation*}%
for all $t_{n}\in \left( t\right) _{h}$, where $M$ is a positive constant$.$
\end{proposition}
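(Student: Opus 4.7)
The plan is to recognize that the scheme (\ref{LLRK scheme}) fits exactly into the framework of Theorem \ref{Conv LLRKScheme} and then verify its two hypotheses: the orders of convergence of the two component integrators and the local Lipschitz property of their increment functions. With those in hand, the stated global bound is immediate.

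First, I would identify
\[
{\Lambda}_{1}(t_{n},\widetilde{\mathbf{y}}_{n};h_{n})=\frac{1}{h_{n}}\widetilde{\mathbf{\phi}}\left(t_{n},\widetilde{\mathbf{y}}_{n};h_{n}\right),\qquad {\Lambda}_{2}^{\widetilde{\mathbf{y}}_{n}}(t_{n},\mathbf{0};h_{n})=\frac{1}{h_{n}}\widetilde{\mathbf{\rho}}\left(t_{n},\widetilde{\mathbf{y}}_{n};h_{n}\right),
\]
and view ${\Lambda}_{1}$ as a one-step integrator for the linear ODE (\ref{ODE-LLS-13})--(\ref{ODE-LLS-13b}) (since $\mathbf{z}(t_{n}+h_{n})-\mathbf{z}(t_{n})=\mathbf{\phi}(t_{n},\widetilde{\mathbf{y}}_{n};h_{n})=\mathbf{L}e^{\widetilde{\mathbf{D}}_{n}h_{n}}\mathbf{r}$) and ${\Lambda}_{2}$ as the order-$\gamma$ explicit RK integrator for (\ref{ODE-LLS-14})--(\ref{ODE-LLS-14b}). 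Assertion 3 is then immediate by hypothesis.

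The crux is assertion 2, namely showing that ${\Lambda}_{1}$ has order of consistency $p+q$. For this I would use the classical local bound for the $(p,q)$-Pad\'{e} approximant: for any matrix $B$ with $\|B\|\le 1/2$,
\[
\bigl\|e^{B}-\mathbf{P}_{p,q}(B)\bigr\|\le c\,\|B\|^{p+q+1},
\]
with a dimension-independent constant $c$. Applying this with $B=2^{-\kappa_{n}}\widetilde{\mathbf{D}}_{n}h_{n}$ and using the telescoping identity
\[
\mathbf{P}_{p,q}(B)^{2^{\kappa_{n}}}-e^{2^{\kappa_{n}}B}=\sum_{j=0}^{2^{\kappa_{n}}-1}\mathbf{P}_{p,q}(B)^{j}\bigl(\mathbf{P}_{p,q}(B)-e^{B}\bigr)(e^{B})^{2^{\kappa_{n}}-1-j},
\]
together with the uniform bounds $\|e^{B}\|\le e^{1/2}$ and $\|\mathbf{P}_{p,q}(B)\|\le e^{1/2}+c\,2^{-(p+q+1)}$, yields
\[
\bigl\|e^{\widetilde{\mathbf{D}}_{n}h_{n}}-\mathbf{P}_{p,q}(2^{-\kappa_{n}}\widetilde{\mathbf{D}}_{n}h_{n})^{2^{\kappa_{n}}}\bigr\|\le C'\,2^{-\kappa_{n}(p+q)}\,\|\widetilde{\mathbf{D}}_{n}h_{n}\|^{p+q+1}=O(h_{n}^{p+q+1}),
\]
where the boundedness of $\widetilde{\mathbf{D}}_{n}$ on a neighbourhood of the exact trajectory follows from the smoothness of $\mathbf{f}$ guaranteed by (\ref{ODE-CONV-8}). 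Consequently $\|\mathbf{\phi}(t_{n},\widetilde{\mathbf{y}}_{n};h_{n})-\widetilde{\mathbf{\phi}}(t_{n},\widetilde{\mathbf{y}}_{n};h_{n})\|=O(h_{n}^{p+q+1})$, which is precisely the statement that ${\Lambda}_{1}$ is an order-$(p+q)$ integrator of (\ref{ODE-LLS-13})--(\ref{ODE-LLS-13b}).

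For the local Lipschitz condition (\ref{Lipschitz}), I would argue that $\widetilde{\mathbf{\phi}}$ is built from $\widetilde{\mathbf{D}}_{n}$ (a smooth function of $(t_{n},\widetilde{\mathbf{y}}_{n})$ by (\ref{ODE-CONV-8})) via the rational map $\mathbf{P}_{p,q}$ and a finite number of squarings. On a compact neighbourhood of the trajectory, $\kappa_{n}$ is bounded and can be taken piecewise constant; within each level set the dependence on $\widetilde{\mathbf{y}}_{n}$ is $C^{1}$, and across levels the two formulas agree to within $O(h^{p+q+1})$, giving a uniform Lipschitz constant. The corresponding property for ${\Lambda}_{2}$ follows from Lemma \ref{Lemma for Local Error LLRK} applied to the auxiliary field $\widetilde{\mathbf{q}}$ together with standard Lipschitz properties of explicit RK increments. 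With all three assumptions of Theorem \ref{Conv LLRKScheme} verified for $r=p+q$ and $p=\gamma$, the conclusion $\|\mathbf{x}(t_{n})-\widetilde{\mathbf{y}}_{n}\|\le Mh^{\min\{\gamma,p+q\}}$ follows directly. The main obstacle is the Pad\'{e} plus scaling-and-squaring error estimate; once that telescoping bound is in hand, the rest of the proof is bookkeeping.
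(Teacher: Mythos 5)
Your proposal is correct and follows the same overall skeleton as the paper's proof: both reduce the claim to Theorem \ref{Conv LLRKScheme} by checking that $\widetilde{\mathbf{\phi }}/h_{n}$ is an order $p+q$ integrator of the linear ODE (\ref{ODE-LLS-13})--(\ref{ODE-LLS-13b}) satisfying the local Lipschitz condition (\ref{Lipschitz}), and that $\widetilde{\mathbf{\rho }}/h_{n}$ is an order $\gamma $ integrator of (\ref{ODE-LLS-14})--(\ref{ODE-LLS-14b}). The genuine difference lies in how the key estimate $\left\Vert \mathbf{z}(t_{n+1})-\mathbf{z}(t_{n})-\widetilde{\mathbf{\phi }}\left( t_{n},\mathbf{z}(t_{n});h_{n}\right) \right\Vert \leq Mh^{p+q+1}$ is obtained: the paper simply cites Lemma 4.1 of \cite{Jimenez12 BIT}, whereas you derive it from scratch via the classical local Pad\'{e} bound for $\left\Vert B\right\Vert \leq 1/2$ and the telescoping identity for $\mathbf{P}_{p,q}(B)^{2^{\kappa _{n}}}-e^{2^{\kappa _{n}}B}$, noting that the factor $2^{\kappa _{n}}$ from the sum is absorbed by one power of $2^{-\kappa _{n}(p+q+1)}$. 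This makes your argument self-contained and more transparent, at the cost of length; it is the standard scaling-and-squaring error analysis and is sound (the powers $\left\Vert \mathbf{P}_{p,q}(B)\right\Vert ^{j}$ stay bounded because $2^{\kappa _{n}}\left\Vert B\right\Vert \leq \left\Vert \widetilde{\mathbf{D}}_{n}\right\Vert h_{n}$ is bounded on a compact neighborhood of the trajectory). On the Lipschitz condition you are in fact more careful than the paper: the paper infers it directly from the analyticity of $\mathbf{P}_{p,q}$ on the unit ball, silently ignoring that $\kappa _{n}$ depends on $\widetilde{\mathbf{y}}_{n}$ and may jump; your remark about handling the level sets of $\kappa _{n}$ addresses a real (if minor) gap, though the claim that agreement to $O(h^{p+q+1})$ across levels yields a uniform Lipschitz constant would itself need a line of justification (e.g., comparing both branches to the globally smooth $\mathbf{\phi }$ together with its own Lipschitz bound). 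Neither issue affects the validity of the conclusion.
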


\begin{proof}
Let $\mathcal{K}\subset \mathcal{D}$ be a compact set. Since $\mathbf{P}%
_{p,q}$ is an analytical function on the unit circle, it is also a Lipschitz
function on this region. This and condition $\left\Vert 2^{-\kappa _{n}}%
\widetilde{\mathbf{D}}_{n}h_{n}\right\Vert \leq \frac{1}{2}$ for all $%
t_{n}\in \left( t\right) _{h}$ imply that there exists a positive constant $%
L $ such that
\begin{equation*}
\left\Vert \widetilde{\mathbf{\phi }}\left( t_{n},\mathbf{\xi }%
_{2};h_{n}\right) -\widetilde{\mathbf{\phi }}\left( t_{n},\mathbf{\xi }%
_{1};h_{n}\right) \right\Vert \leq L\left\Vert \mathbf{\xi }_{2}-\mathbf{\xi
}_{1}\right\Vert
\end{equation*}%
for all $\mathbf{\xi }_{1},\mathbf{\xi }_{2}\in \mathcal{K}$ and $t_{n}\in
\left( t\right) _{h}$. On the other hand, Lemma 4.1 in \cite{Jimenez12 BIT}
implies that there exists a positive constant $M$ such that
\begin{equation*}
\left\Vert \mathbf{z}(t_{n+1})-\mathbf{z}(t_{n})-\widetilde{\mathbf{\phi }}%
\left( t_{n},\mathbf{z}(t_{n});h_{n}\right) \right\Vert \leq Mh^{p+q+1}
\end{equation*}%
for all $t_{n}\in \left( t\right) _{h}$, where $\mathbf{z}$ is the solution
of the linear ODE (\ref{ODE-LLS-13})-(\ref{ODE-LLS-13b}).

In addition, since $\widetilde{\mathbf{\rho}}$ is an order\ $\gamma$
approximation to the solution of (\ref{ODE-LLS-14})-(\ref{ODE-LLS-14b}) that
satisfies the condition (\ref{Lipschitz}), the hypotheses of Theorem \ref%
{Conv LLRKScheme} hold, which completes the proof.
\end{proof}

The next theorem presents a way to define a class of A-stable LLRK schemes
on the basis of Pad\'{e} approximations to matrix exponentials.

\begin{theorem}
LLRK schemes of the form (\ref{LLRK scheme}) are A-stable if the $(p,q)$-Pad%
\'{e} approximation is taken with $p\leq q\leq p+2$. Moreover, if $q=p+1$ or
$q=p+2$, then such LLRK schemes are also L-stable.
\end{theorem}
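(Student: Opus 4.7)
My plan is to apply the LLRK scheme (\ref{LLRK scheme}) to the Dahlquist test equation $\dot{x}=\lambda x$ with $\mathrm{Re}(\lambda)\leq 0$, reduce its one-step advance to a closed-form stability function, and then invoke the classical Ehle characterization of $A$- and $L$-acceptable Padé approximants of $e^{z}$. The first observation is that the RK correction $\widetilde{\mathbf{\rho }}$ makes no contribution in this linear setting: specializing the vector field of (\ref{ODE-LLS-14})--(\ref{ODE-LLS-14b}) to $\mathbf{f}(y)=\lambda y$, the affine terms cancel and one obtains $\widetilde{\mathbf{q}}(t_{n},\widetilde{\mathbf{y}}_{n};s,\mathbf{\xi })=\lambda\,\mathbf{\xi }$. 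Because the RK scheme is explicit and starts from $\mathbf{v}(t_{n})=\mathbf{0}$, a trivial stage-by-stage induction yields $\mathbf{k}_{i}=\mathbf{0}$ for every $i$, so $\widetilde{\mathbf{\rho }}=0$ and the scheme collapses to $\widetilde{y}_{n+1}=\widetilde{y}_{n}+\widetilde{\mathbf{\phi }}(t_{n},\widetilde{y}_{n};h)$.

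Next I would evaluate $\widetilde{\mathbf{\phi }}=\mathbf{L}\,(\mathbf{P}_{p,q}(2^{-\kappa }\widetilde{\mathbf{D}}_{n}h))^{2^{\kappa }}\mathbf{r}$ in closed form on the test equation. Here $\widetilde{\mathbf{D}}_{n}=\bigl(\begin{smallmatrix}\lambda & \lambda y_{n}\\ 0 & 0\end{smallmatrix}\bigr)$, and its powers preserve the block structure $\widetilde{\mathbf{D}}_{n}^{k}=\bigl(\begin{smallmatrix}\lambda^{k} & \lambda^{k} y_{n}\\ 0 & 0\end{smallmatrix}\bigr)$ for $k\geq 1$. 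Writing $\mathbf{P}_{p,q}=N/D$ with $N(0)=D(0)=1$, a direct computation using the $2\times 2$ inversion of the block-triangular denominator gives
\[
\mathbf{P}_{p,q}\bigl(2^{-\kappa }\widetilde{\mathbf{D}}_{n}h\bigr)=\begin{pmatrix}\mathbf{P}_{p,q}(\lambda h/2^{\kappa }) & y_{n}\bigl(\mathbf{P}_{p,q}(\lambda h/2^{\kappa })-1\bigr)\\ 0 & 1\end{pmatrix},
\]
a matrix of the same upper-triangular form. Raising to the $2^{\kappa }$-th power by induction and extracting the $(1,2)$-entry yields $\widetilde{\mathbf{\phi }}(t_{n},y_{n};h)=y_{n}\bigl[\mathbf{P}_{p,q}(\lambda h/2^{\kappa })^{2^{\kappa }}-1\bigr]$, so that a single step of the scheme reads $\widetilde{y}_{n+1}=S(\lambda h)\,\widetilde{y}_{n}$ with stability function $S(z)=\mathbf{P}_{p,q}(z/2^{\kappa })^{2^{\kappa }}$.

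Finally I would appeal to Ehle's classical theorem: $\mathbf{P}_{p,q}$ is $A$-acceptable iff $p\leq q\leq p+2$, and additionally $L$-acceptable (i.e.\ $\mathbf{P}_{p,q}(z)\to 0$ as $\mathrm{Re}(z)\to -\infty $) iff $q>p$. The scaling-and-squaring wrapper preserves both properties: if $|\mathbf{P}_{p,q}(w)|\leq 1$ whenever $\mathrm{Re}(w)\leq 0$, then $|S(z)|=|\mathbf{P}_{p,q}(z/2^{\kappa })|^{2^{\kappa }}\leq 1$ for all $\mathrm{Re}(z)\leq 0$; and when $q>p$, the fact that $|z/2^{\kappa }|$ stays in a fixed annulus by the choice of $\kappa $, combined with $|\mathbf{P}_{p,q}(w)|<1$ on compact subsets of the open left half-plane, yields $S(z)\to 0$ as $\mathrm{Re}(z)\to -\infty $. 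The conclusions of the theorem follow. The main obstacle is the second step: one must carefully track that the block-triangular structure of $\widetilde{\mathbf{D}}_{n}$ is inherited by every normalized rational function evaluated at it (which is precisely why the Padé normalization $N(0)=D(0)=1$ enters), that the $2\times 2$ inversion inside $R=N/D$ is legitimate, and that the subsequent squaring collapses cleanly into the announced scalar expression. Once this matrix computation is in hand, the remainder is essentially a direct transcription of Ehle's result.
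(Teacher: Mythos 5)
Your proposal is correct and follows essentially the same route as the paper: reduce the scheme on the Dahlquist test equation to the scalar stability function $R(\lambda)=\bigl(\mathbf{N}_{p,q}(2^{-\kappa}h\lambda)/\mathbf{D}_{p,q}(2^{-\kappa}h\lambda)\bigr)^{2^{\kappa}}$ via the block upper-triangular structure of $\widetilde{\mathbf{D}}_{n}$, and then invoke the Ehle/Butcher characterization of $A$- and $L$-acceptable Pad\'{e} approximants. You are in fact slightly more explicit than the paper on two points it leaves tacit: the stage-by-stage argument showing that $\widetilde{\mathbf{\rho}}$ vanishes on the linear test equation, and the observation that under scaling and squaring the $L$-stability limit comes from $\left\vert \mathbf{P}_{p,q}(w)\right\vert^{2^{\kappa}}$ with $w$ confined to a compact set and $\kappa\to\infty$, rather than from the naive limit of $\mathbf{P}_{p,q}$ at infinity.
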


\begin{proof}
Consider the scalar test equation%
\begin{equation*}
dx\left( t\right) =\lambda x\left( t\right) dt,
\end{equation*}
where $\lambda$ is a complex number with non-positive real part.

An LLRK scheme of the form (\ref{LLRK scheme}) applied to this autonomous
equation results in the recurrence%
\begin{align}
\widetilde{y}_{n+1}& =\widetilde{y}_{n}+\widetilde{\mathbf{\phi }}\left(
t_{n},\widetilde{y}_{n};h_{n}\right)  \notag \\
& =\widetilde{y}_{n}+\mathbf{L}(\mathbf{P}_{p,q}(\mathbf{M}))^{2^{\kappa
_{n}}}\mathbf{r,}  \label{LLRK scheme for linear ODE}
\end{align}%
where $\mathbf{M}=2^{-\kappa _{n}}\widetilde{\mathbf{D}}_{n}h_{n}$ and
\begin{equation*}
\widetilde{\mathbf{D}}_{n}=\left[
\begin{array}{cc}
\lambda & \lambda \widetilde{y}_{n} \\
0 & 0%
\end{array}%
\right] .
\end{equation*}%
Here,
\begin{equation*}
\mathbf{P}_{p,q}(z)=\frac{\mathbf{N}_{p,q}(z)}{\mathbf{D}_{p,q}(z)}
\end{equation*}%
denotes the $(p,q)-$Pad\'{e} approximation to $e^{z}$, where
\begin{equation*}
\mathbf{N}_{p,q}(z)=1+\frac{p}{q+p}z+\frac{p(p-1)}{(q+p)(q+p-1)}\frac{z^{2}}{%
2!}+\ldots +\frac{p(p-1)...1}{(q+p)...(q+1)}\frac{z^{p}}{p!},
\end{equation*}%
and $\mathbf{D}_{p,q}(z)=\mathbf{N}_{q,p}(-z)$.

Since
\begin{equation*}
\left( \mathbf{M}\right) ^{j}=\left[
\begin{array}{cc}
\left( 2^{-\kappa _{n}}h_{n}\lambda \right) ^{j} & \text{ \ }\left(
2^{-\kappa _{n}}h_{n}\lambda \right) ^{j}\widetilde{y}_{n} \\
0 & 0%
\end{array}%
\right] ,
\end{equation*}%
it can be shown that
\begin{equation*}
\mathbf{N}_{p,q}(\mathbf{M})=\left[
\begin{array}{cc}
\mathbf{N}_{p,q}\left( 2^{-\kappa _{n}}h_{n}\lambda \right) & \text{ \ }%
\left( \mathbf{N}_{p,q}\left( 2^{-\kappa _{n}}h_{n}\lambda \right) -1\right)
\widetilde{y}_{n} \\
0 & 1%
\end{array}%
\right] .
\end{equation*}%
Likewise,%
\begin{equation*}
\mathbf{D}_{p,q}(\mathbf{M})=\left[
\begin{array}{cc}
\mathbf{D}_{p,q}\left( 2^{-\kappa _{n}}h_{n}\lambda \right) & \text{ \ }%
\left( \mathbf{D}_{p,q}\left( 2^{-\kappa _{n}}h_{n}\lambda \right) -1\right)
\widetilde{y}_{n} \\
0 & 1%
\end{array}%
\right] .
\end{equation*}%
Hence,
\begin{equation*}
\mathbf{D}_{p,q}^{-1}(\mathbf{M})=\left[
\begin{array}{cc}
\left( \mathbf{D}_{p,q}\left( 2^{-\kappa _{n}}h_{n}\lambda \right) \right)
^{-1} & \text{ \ }-\frac{\left( \mathbf{D}_{p,q}\left( 2^{-\kappa
_{n}}h_{n}\lambda \right) -1\right) }{\left( \mathbf{D}_{p,q}\left(
2^{-\kappa _{n}}h_{n}\lambda \right) \right) }\widetilde{y}_{n} \\
0 & 1%
\end{array}%
\right] .
\end{equation*}%
Therefore,%
\begin{align*}
\mathbf{P}_{p,q}(\mathbf{M})& =\mathbf{N}_{p,q}(\mathbf{M})\mathbf{D}%
_{p,q}^{-1}(\mathbf{M}) \\
& =\left[
\begin{array}{cc}
\frac{\mathbf{N}_{p,q}\left( 2^{-\kappa _{n}}h_{n}\lambda \right) }{\mathbf{D%
}_{p,q}\left( 2^{-\kappa _{n}}h_{n}\lambda \right) } & \text{ \ }\left(
\frac{\mathbf{N}_{p,q}\left( 2^{-\kappa _{n}}h_{n}\lambda \right) }{\mathbf{D%
}_{p,q}\left( 2^{-\kappa _{n}}h_{n}\lambda \right) }-1\right) \widetilde{y}%
_{n} \\
0 & 1%
\end{array}%
\right] ,
\end{align*}%
and so%
\begin{equation*}
\left( \mathbf{P}_{p,q}(\mathbf{M})\right) ^{2^{\kappa _{n}}}=\left[
\begin{array}{cc}
\left( \frac{\mathbf{N}_{p,q}\left( 2^{-\kappa _{n}}h_{n}\lambda \right) }{%
\mathbf{D}_{p,q}\left( 2^{-\kappa _{n}}h_{n}\lambda \right) }\right)
^{2^{\kappa _{n}}} & \text{ \ }\left( \left( \frac{\mathbf{N}_{p,q}\left(
2^{-\kappa _{n}}h_{n}\lambda \right) }{\mathbf{D}_{p,q}\left( 2^{-\kappa
_{n}}h_{n}\lambda \right) }\right) ^{2^{\kappa _{n}}}-1\right) \widetilde{y}%
_{n} \\
0 & 1%
\end{array}%
\right] .
\end{equation*}%
By substituting the above expression in (\ref{LLRK scheme for linear ODE})
it is obtained that
\begin{equation*}
\widetilde{y}_{n+1}=R(\lambda )\widetilde{y}_{n},
\end{equation*}%
where%
\begin{equation*}
R(\lambda )=\left( \frac{\mathbf{N}_{p,q}\left( 2^{-\kappa _{n}}h_{n}\lambda
\right) }{\mathbf{D}_{p,q}\left( 2^{-\kappa _{n}}h_{n}\lambda \right) }%
\right) ^{2^{\kappa _{n}}}.
\end{equation*}%
Since\ $\Re (2^{-\kappa _{n}}h_{n}\lambda )\leq 0$, Theorem 353A, pp. 238 in
\cite{Butcher 2008} implies that $\left\vert R(\lambda )\right\vert \leq 1$
for $p\leq q\leq p+2$. That is, for these values of $p$ and $q$ the LLRK
scheme (\ref{LLRK scheme}) is A-stable. The proof concludes by noting that,
for $q=p+1$ or $q=p+2$, $R(z)=0$ when $z\rightarrow \infty $.
\end{proof}

From an implementation viewpoint, further simplifications for LLRK schemes
can be achieved in order to reduce the computational budget of the
algorithms. For instance, if all the Runge Kutta coefficients $c_{i}$ have a
minimum common multiple $\kappa $, then the LLRK scheme (\ref{LLRK scheme})
can be implemented in terms of a few powers of the same matrix exponential $%
e^{\kappa h_{n}\widetilde{\mathbf{D}}_{n}}$. To illustrate this, let us
consider the so called \textit{four order classical Runge-Kutta scheme}
(see, e.g., pp. 180 in \cite{Butcher 2008}) with coefficients $c=\left[
\begin{array}{cccc}
0 & \frac{1}{2} & \frac{1}{2} & 1%
\end{array}%
\right] $. This yields the following efficient order 4 LLRK scheme
\begin{equation}
\widetilde{\mathbf{y}}_{n+1}=\widetilde{\mathbf{y}}_{n}+\widetilde{\mathbf{%
\phi }}\left( t_{n},\widetilde{\mathbf{y}}_{n};h_{n}\right) +\widetilde{%
\mathbf{\rho }}\left( t_{n},\widetilde{\mathbf{y}}_{n};h_{n}\right) ,
\label{LLRK4 scheme}
\end{equation}%
where%
\begin{equation*}
\widetilde{\mathbf{\rho }}\left( t_{n},\widetilde{\mathbf{y}}%
_{n};h_{n}\right) =\frac{h_{n}}{6}(2\widetilde{\mathbf{k}}_{2}+2\widetilde{%
\mathbf{k}}_{3}+\widetilde{\mathbf{k}}_{4}),
\end{equation*}%
with\ \ \ \ \ \ \
\begin{align*}
\widetilde{\mathbf{k}}_{i}& =\mathbf{f}\left( t_{n}+c_{i}h_{n},\widetilde{%
\mathbf{y}}_{n}+\widetilde{\mathbf{\phi }}(t_{n},\widetilde{\mathbf{y}}%
_{n};c_{i}h_{n})+c_{i}h_{n}\widetilde{\mathbf{k}}_{i-1}\right) -\mathbf{f}%
\left( t_{n},\widetilde{\mathbf{y}}_{n}\right) \\
& -\mathbf{f}_{\mathbf{x}}\left( t_{n},\widetilde{\mathbf{y}}_{n}\right)
\widetilde{\mathbf{\phi }}\left( t_{n},\widetilde{\mathbf{y}}%
_{n};c_{i}h_{n}\right) \ -\mathbf{f}_{t}\left( t_{n},\widetilde{\mathbf{y}}%
_{n}\right) c_{i}h_{n},
\end{align*}%
$\widetilde{\mathbf{k}}_{1}\equiv \mathbf{0}$, $\widetilde{\mathbf{\phi }}%
(t_{n},\widetilde{\mathbf{y}}_{n};\frac{h_{n}}{2})=\mathbf{LAr}$, $%
\widetilde{\mathbf{\phi }}(t_{n},\widetilde{\mathbf{y}}_{n};h_{n})=\mathbf{LA%
}^{2}\mathbf{r}$, $\mathbf{A}=(\mathbf{P}_{p,q}(2^{-\kappa _{n}}\widetilde{%
\mathbf{D}}_{n}h_{n}))^{2^{\kappa _{n}}}$,
\begin{equation*}
\widetilde{\mathbf{D}}_{n}=\left[
\begin{array}{ccc}
\mathbf{f}_{\mathbf{x}}(t_{n},\widetilde{\mathbf{y}}_{n}) & \mathbf{f}%
_{t}(t_{n},\widetilde{\mathbf{y}}_{n}) & \mathbf{f}(t_{n},\widetilde{\mathbf{%
y}}_{n}) \\
0 & 0 & 1 \\
0 & 0 & 0%
\end{array}%
\right] \in \mathbb{R}^{(d+2)\times (d+2)},
\end{equation*}%
and $\kappa _{n}$ is the smallest integer number such that $\left\Vert
2^{-\kappa _{n}}\widetilde{\mathbf{D}}_{n}h_{n}\right\Vert \leq \frac{1}{2}$.

Note that the dynamical properties of an order $\gamma$ LLRK discretization,
as stated in section \ref{Sec. teoria}, are inherited by its numerical
implementations if the approximation to the map $\mathbf{\phi}+$ $\mathbf{%
\rho}$ is $o(h^{\gamma-1})$ and smooth enough (i.e., of class $C^{\gamma}$).
In particular, these conditions are satisfied by the implementations just
introduced, namely, those given by (\ref{LLRK scheme}). This provides
theoretical support to the simulation study presented in \cite{de la Cruz 06}%
, \cite{de la Cruz Ph.D. Thesis}, which reports satisfactory dynamical
behavior of LLRK schemes in the neighborhood of invariant sets of ODEs.

Finally note that, as an example, this section has focused on a specify kind
of LLRK scheme, namely, the A-stable scheme (\ref{LLRK4 scheme}) that
combines the A-stable Pad\'{e} algorithm to compute the $\mathbf{\varphi }%
_{\gamma }$ with the $4$ order classical Runge-Kutta scheme to compute the
solution of the auxiliary equation (\ref{ODE-LLS-14})-(\ref{ODE-LLS-14b}).
However, because of the flexibility in the numerical implementation of the
LLRK methods, specific schemes can be designed for certain classes of ODEs,
i.e., LLRK schemes based on L-stable Pad\'{e} algorithm and Rosenbrock
schemes for stiff equations; or LLRK schemes based on Krylov algorithm in
case of high dimensional ODEs, etc. For all of them the results of this
section also apply.

\section{Numerical simulations}

In this section, the performance of the LLRK$4$ scheme (\ref{LLRK4 scheme})
is illustrated by means of numerical simulations. To do so, a variety of
ODEs were selected. All simulations were carried out in Matlab2007b, and the
Matlab function\ "expm" was used in all computations involving exponential
matrices.

The first example is taken from \cite{Beyn 1987a} to illustrate the
dynamical behavior of the LLRK$4$ scheme in the neighborhood of hyperbolic
stationary points. For comparative purposes, the order $2$ Local
Linearization scheme of \cite{Jimenez02 AMC}, and a straightforward
non-adaptive implementation of the order $5$ Runge-Kutta formula of Dormand
\& Prince \cite{Dormand80}\ (used in Matlab2007b) are considered too. They
will be denoted by LL$2$ and RK$45$, respectively.

\textbf{Example 1}

\begin{align}
\frac{d\mathbf{x}_{1}}{dt} & =-2\mathbf{x}_{1}+\mathbf{x}_{2}+1-\mu f\left(
\mathbf{x}_{1},\lambda\right) ,  \label{EJ1-E1} \\
\frac{d\mathbf{x}_{2}}{dt} & =\mathbf{x}_{1}-2\mathbf{x}_{2}+1-\mu f\left(
\mathbf{x}_{2},\lambda\right) ,  \label{EJ1-E2}
\end{align}
where $f\left( u,\lambda\right) =u\left( 1+u+\lambda u^{2}\right) ^{-1}$.

For $\mu=15$, $\lambda=57$, this system has two stable stationary points and
one unstable stationary point in the region $0\leq x_{1},x_{2}\leq1$. There
is a nontrivial stable manifold for the unstable point which separates the
basins of attraction for the two stable points.

Figure 1a) presents the phase portrait obtained by the LLRK$4$ scheme with a
very small step-size $\left( h=2^{-13}\right) $, which can be regarded as
the exact solution for comparative purposes. The stable manifold $M_{s}$ of
the unstable point was found by bisection. Figures 1b), 1c) and 1d) show the
phase portraits obtained, respectively, by the LL$2$, the RK$45$ and the LLRK%
$4$ schemes with step-size $h=2^{-2}$ fixed. It can be observed that the RK$%
45$ discretization fails to reproduce correctly the phase portrait of the
underlying system near one of the point attractors. On the contrary, the
exact phase portrait is adequately approximated near both point attractors
by the LL$2$ and LLRK$4$ schemes, being the latter much more accurate. Other
significant difference in the integration of this equation appears near to
the stable manifold $M_{s}$. Changes in the intersection point $(0,\xi _{h})$
of the approximate stable manifold $M_{s}^{h}$ with the $x_{2}$-axis is
shown in Table I for the considered schemes. The values of $\xi _{h}$ were
calculated by a bisection method and the estimated order of convergence was
calculated as
\begin{equation*}
r_{h}=\frac{1}{\ln 2}\ln (\frac{\xi _{h}-\xi _{h/2}}{\xi _{h/2}-\xi _{h/4}}).
\end{equation*}%
For $h<2^{-4}$, the reported values of $r_{h}$ for the schemes LL$2$ and LLRK%
$4$ are in concordance with the expected asymptotic behavior $\xi _{h}=\xi
_{0}+Ch^{r}+O(h^{r+1})$ stated by Theorem \ref{Prop. 5.3} and Theorems 3 in
\cite{Jimenez02 AMC}, respectively, but not with the stated by Theorem 3.1
in \cite{Beyn 1987a} for the RK$45$, i.e., $r_{h}\approx 5$. This means that
the LL$2$ and LLRK$4$ schemes provide better approximations to the stable
and unstable manifolds on bigger neighborhoods of the equilibrium points,
which is obviously a favorable result for them. These results show out too
that the LLRK4 scheme preserves much better the basins of attraction of the
ODE (\ref{EJ1-E1})-(\ref{EJ1-E2}) than the RK$45$ and LL$2$ schemes.

\begin{figure}[h]
\begin{center}
\includegraphics[width=1.0\textwidth]{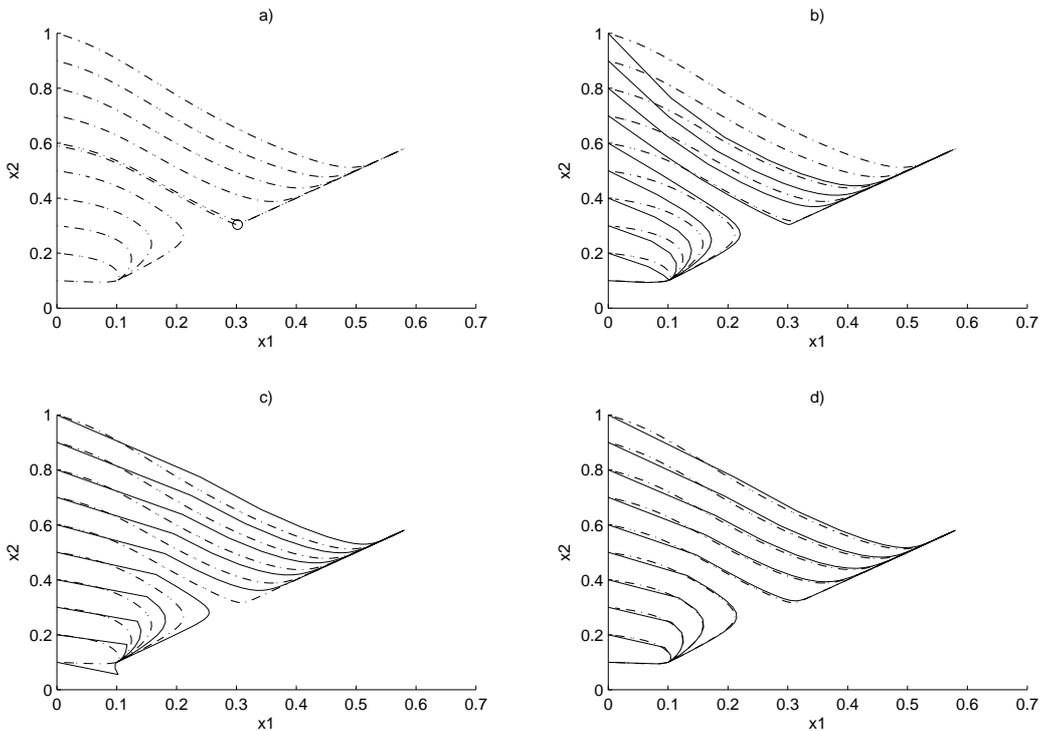}%
\caption{Phase portrait of the system (\ref{EJ1-E1}%
)-(\ref{EJ1-E2}) computed with fixed step-size $h$ by a) LLRK4 scheme with
$h=10^{-13}$ (dashed line). The unstable point is pointed out with "o"; b)
LL2 scheme, with $h=2^{-2}$; c) RK45 scheme, with $h=2^{-2}$; d) LLRK4 scheme,
with $h=2^{-2}$. In all cases the solid lines represent the solution computed
with $h=2^{-2}$.}%
\end{center}
\end{figure}

In what follows, we compare the accuracy of the LLRK$4$ scheme with those of
the LL$2$ scheme, and the Matlab2007b codes ode$45$ and ode$15s$ in the
integration of a variety of ODEs. We recall that the code ode$45$ is a
variable step-size implementation of the explicit Runge-Kutta $(4,5)$ pair
of Dormand \& Prince \cite{Dormand80}, which is considered for many authors
the most recommendable scheme to apply as a first try for most problems. On
the other hand, the code ode$15s$ is a quasi-constant step-size
implementation in terms of backward differences of the Klopfenstein-Shampine
family of numerical differentiation formulas of orders $1-5$, which is
designed for stiff problems when the ode$45$ fails to provide desired result
\cite{Shampine97}.\newline

\begin{center}
$\ $%
\begin{tabular}
[c]{|c|c|c|c|}\hline
Step-size%
$\backslash$%
Scheme & LL$2$ & RK$45$ & LLRK$4$\\\hline%
\begin{tabular}
[c]{l}%
$h$\\
$2^{-1}$\\
$2^{-2}$\\
$2^{-3}$\\
$2^{-4}$\\
$2^{-5}$\\
$2^{-6}$\\
$2^{-7}$\\
$2^{-8}$%
\end{tabular}
&
\begin{tabular}{ll}
$\xi_{h}$ & $r_{h}$ \\
\multicolumn{1}{c}{$0.71911$} & \multicolumn{1}{c}{} \\
\multicolumn{1}{c}{$0.69688$} & \multicolumn{1}{c}{$1.931$} \\
\multicolumn{1}{c}{$0.61727$} & \multicolumn{1}{c}{$2.190$} \\
\multicolumn{1}{c}{$0.59639$} & \multicolumn{1}{c}{$2.145$} \\
\multicolumn{1}{c}{$0.59182$} & \multicolumn{1}{c}{$2.056$} \\
\multicolumn{1}{c}{$0.59079$} & \multicolumn{1}{c}{$2.027$} \\
\multicolumn{1}{c}{$0.59054$} & $2.014$ \\
\multicolumn{1}{c}{$0.59048$} &
\end{tabular}
&
\begin{tabular}{ll}
$\xi_{h}$ & $r_{h}$ \\
$0.70377$ &  \\
$0.53673$ & $3.00$ \\
$0.59859$ & $4.18$ \\
$0.59088$ & $7.23$ \\
$0.590458$ & $6.93$ \\
$0.59045593$ & $6.39$ \\
$0.5904559168$ & $5.98$ \\
$0.5904559165$ &
\end{tabular}
&
\begin{tabular}{ll}
$\xi_{h}$ & $r_{h}$ \\
$0.56615$ &  \\
$0.58441$ & $5.142$ \\
$0.59032$ & $2.384$ \\
$0.59049$ & $3.354$ \\
$0.590459$ & $3.901$ \\
$0.5904561$ & $3.973$ \\
$0.590455917$ & $3.989$ \\
$0.590455916$ &
\end{tabular}
\\ \hline
\end{tabular}
\end{center}

{\small Table I. Values of }$\xi_{h}${\small \ and }$r_{h}${\small \
computed by the LL}${\small 2}${\small , RK}${\small 45}$ {\small and LLRK}$%
{\small 4}$ {\small schemes in the integration of the system (\ref{EJ1-E1})-(%
\ref{EJ1-E2}), for different values of }$h${\small .}\newline

In order to compare the (non-adaptive) LL schemes with the adaptive Matlab
codes, the following procedure was carried out. First, one of the Matlab
codes is used \ to compute the solution with fixed values of relative ($RT$)
and absolute ($AT$) tolerance. Then, the resulting integration steps $%
(t)_{h} $ are set as input in the other schemes for obtaining solutions at
the same integration steps. Second, the Matlab code ode15s is used to
compute on $(t)_{h}$ a very accurate solution $\mathbf{z}$ with $%
RT=RA=10^{-13}$. Third, the approximate solution $\mathbf{y}$ of the ODE is
computed for each scheme on $(t)_{h}$, and the relative error%
\begin{equation*}
RE=\underset{{\small i=1,\ldots ,d;}\text{ }{\small t}_{j}{\small \in (t)}%
_{h}}{\max }\left\vert \frac{\mathbf{z}_{i}(t_{j})-\mathbf{y}_{i}(t_{j})}{%
\mathbf{z}_{i}(t_{j})}\right\vert
\end{equation*}%
is evaluated.

The following four examples are of the form%
\begin{equation}
\frac{d\mathbf{x}}{dt}=\mathbf{Ax+f}(\mathbf{x),}  \label{Semi-Linear}
\end{equation}%
where $\mathbf{A}$ is a square constant matrix, and $\mathbf{f}$ is a
nonlinear function of $\mathbf{x}$. The vector field of the first two ones
has Jacobians with eigenvalues on or near to the imaginary axis, which make
these oscillators difficulty to be integrated by a number of conventional
integrators \cite{Gaffney84,Shampine97}. The other two are also hard for
conventional explicit schemes since they are examples of stiff equations
\cite{Shampine97}. Example 5 has an additional complexity for a number of
integrators that do not update the Jacobians of the vector field at each
integration step \cite{Shampine97,Hochbruck-etal09}: the Jacobian of the
linear term has positive eigenvalues, which results a problem for the
integration in a neighborhood of the stable equilibrium point $\mathbf{x}=1$.

\textbf{Example 2}. Periodic linear:%
\begin{equation*}
\frac{d\mathbf{x}}{dt}=\mathbf{A}(\mathbf{x}+2),
\end{equation*}%
with%
\begin{equation*}
\mathbf{A}=\left[
\begin{array}{cc}
i & 0 \\
0 & -i%
\end{array}%
\right] ,
\end{equation*}%
$\mathbf{x}_{1}(t_{0})=-2.5$, $\mathbf{x}_{2}(t_{0})=-1.5$, and $%
[t_{0},T]=[0,4\pi ]$.

\textbf{Example 3. }Periodic linear plus nonlinear part:%
\begin{equation*}
\frac{d\mathbf{x}}{dt}=\mathbf{A}(\mathbf{x}+2)+0.1\mathbf{x}^{2},
\end{equation*}%
where the matrix $\mathbf{A}$ is defined as in the previous example, $%
\mathbf{x}(t_{0})=1$, and $[t_{0},T]=[0,4\pi ]$.

\textbf{Example 4. }Stiff equation:%
\begin{equation*}
\frac{d\mathbf{x}}{dt}=-100\mathbf{H}(\mathbf{x+1}),
\end{equation*}%
where $\mathbf{H}$ is the 12-dimensional Hilbert matrix (with conditioned
number $1.69\times 10^{16}$), $\mathbf{x}_{i}(t_{0})=1$, and $%
[t_{0},T]=[0,1] $.

\textbf{Example 5. }Stiff linear plus nonlinear part:%
\begin{equation*}
\frac{d\mathbf{x}}{dt}=100\mathbf{H}(\mathbf{x}-\mathbf{1})+100(\mathbf{x}-%
\mathbf{1})^{2}-60(\mathbf{x}^{3}-\mathbf{1}),
\end{equation*}%
where $\mathbf{H}$ is the 12-dimensional Hilbert matrix, $\mathbf{x}%
_{i}(t_{0})=-0.5$, and $[t_{0},T]=[0,1]$.
\newline
\newline

\begin{center}
$\ $%
\begin{tabular}{|c|c|c|c|c|c|}
\hline
{\small Example} & {\small Scheme} & $%
\begin{array}{c}
\text{{\small Relative}} \\
\text{{\small Tolerance}}%
\end{array}
$ & $%
\begin{array}{c}
\text{{\small Absolute}} \\
\text{{\small Tolerance}}%
\end{array}
$ & $%
\begin{array}{c}
\text{{\small NS}} \\
\end{array}
$ & $%
\begin{array}{c}
\text{{\small Relative}} \\
\text{{\small Error}}%
\end{array}
$ \\ \hline
\multicolumn{1}{|l|}{$%
\begin{array}{c}
\text{{\small 2 : Periodic linear}}%
\end{array}
$} & $%
\begin{array}{c}
{\small ode15s}^{\ast} \\
{\small ode45} \\
{\small LL2} \\
{\small LLRK4}%
\end{array}
$ & $%
\begin{array}{c}
{\small 10}^{-3} \\
{\small 5\times10}^{-6} \\
{\small -} \\
{\small -}%
\end{array}
$ & $%
\begin{array}{c}
{\small 10}^{-6} \\
{\small 5\times10}^{-9} \\
{\small -} \\
{\small -}%
\end{array}
$ & $%
\begin{array}{c}
{\small 334} \\
{\small 340} \\
{\small 334} \\
{\small 334}%
\end{array}
$ & \multicolumn{1}{|l|}{$%
\begin{array}{c}
{\small 0.19} \\
{\small 8.2\times10}^{-5} \\
{\small 1.6\times10}^{-12} \\
{\small 1.6\times10}^{-12}%
\end{array}
$} \\ \hline
\multicolumn{1}{|l|}{$%
\begin{array}{c}
\text{{\small 3 : Periodic linear}} \\
\text{{\small plus nonlinear part}}%
\end{array}
$} & $%
\begin{array}{c}
{\small ode15s}^{\ast} \\
{\small ode45} \\
{\small LL2} \\
{\small LLRK4}%
\end{array}
$ & $%
\begin{array}{c}
{\small 10}^{-3} \\
{\small 6\times10}^{-6} \\
{\small -} \\
{\small -}%
\end{array}
$ & $%
\begin{array}{c}
{\small 10}^{-6} \\
{\small 6\times10}^{-9} \\
{\small -} \\
{\small -}%
\end{array}
$ & $%
\begin{array}{c}
{\small 287} \\
{\small 289} \\
{\small 287} \\
{\small 287}%
\end{array}
$ & \multicolumn{1}{|l|}{$%
\begin{array}{c}
{\small 0.30} \\
{\small 1.3\times10}^{-4} \\
{\small 3.1\times10}^{-2} \\
{\small 1.1\times10}^{-5}%
\end{array}
$} \\ \hline
\multicolumn{1}{|l|}{$%
\begin{array}{c}
\text{{\small 4 : Stiff linear}}%
\end{array}
$} & $%
\begin{array}{c}
{\small ode15s}^{\ast} \\
{\small ode45} \\
{\small LL2} \\
{\small LLRK4}%
\end{array}
$ & $%
\begin{array}{c}
{\small 10}^{-3} \\
{\small 5\times10}^{-4} \\
{\small -} \\
{\small -}%
\end{array}
$ & $%
\begin{array}{c}
{\small 10}^{-6} \\
{\small 5\times10}^{-7} \\
{\small -} \\
{\small -}%
\end{array}
$ & $%
\begin{array}{c}
{\small 66} \\
{\small 66} \\
{\small 66} \\
{\small 66}%
\end{array}
$ & \multicolumn{1}{|l|}{$%
\begin{array}{c}
{\small 6.7\times10}^{-2} \\
{\small 5.3\times10}^{-3} \\
{\small 1.8\times10}^{-10} \\
{\small 1.8\times10}^{-10}%
\end{array}
$} \\ \hline
\multicolumn{1}{|l|}{$%
\begin{array}{c}
\text{{\small 5 : Stiff linear \ \ \ \ \ \ \ }} \\
\text{{\small plus nonlinear part}}%
\end{array}
$} & $%
\begin{array}{c}
{\small ode15s}^{\ast} \\
{\small ode45} \\
{\small LL2} \\
{\small LLRK4}%
\end{array}
$ & $%
\begin{array}{c}
{\small 10}^{-2} \\
{\small 10}^{-1} \\
{\small -} \\
{\small -}%
\end{array}
$ & $%
\begin{array}{c}
{\small 10}^{-4} \\
{\small 10}^{-3} \\
{\small -} \\
{\small -}%
\end{array}
$ & $%
\begin{array}{c}
{\small 49} \\
{\small 104} \\
{\small 49} \\
{\small 49}%
\end{array}
$ & \multicolumn{1}{|l|}{$%
\begin{array}{c}
{\small 0.31} \\
{\small 0.37} \\
{\small 0.43} \\
{\small 4.3\times10}^{-5}%
\end{array}
$} \\ \hline
\multicolumn{1}{|l|}{$%
\begin{array}{c}
\text{{\small 6 : Nonlinear}} \\
\text{{\small (no stiff)}}%
\end{array}
$} & $%
\begin{array}{c}
{\small ode15s} \\
{\small ode45}^{\ast} \\
{\small LL2} \\
{\small LLRK4}%
\end{array}
$ & $%
\begin{array}{c}
{\small 10}^{-2} \\
{\small 10}^{-3} \\
{\small -} \\
{\small -}%
\end{array}
$ & $%
\begin{array}{c}
{\small 10}^{-5} \\
{\small 10}^{-6} \\
{\small -} \\
{\small -}%
\end{array}
$ & $%
\begin{array}{c}
{\small 103} \\
{\small 47} \\
{\small 47} \\
{\small 47}%
\end{array}
$ & $%
\begin{array}{c}
{\small 0.35} \\
{\small 0.08} \\
{\small 4.19} \\
{\small 0.25}%
\end{array}
$ \\ \hline
\multicolumn{1}{|l|}{$%
\begin{array}{c}
\text{{\small 7 : Nonlinear \ \ \ }} \\
\text{{\small (moderate stiff)}}%
\end{array}
$} & $%
\begin{array}{c}
{\small ode15s} \\
{\small ode45}^{\ast} \\
{\small LL2} \\
{\small LLRK4}%
\end{array}
$ & $%
\begin{array}{c}
{\small 1.5\times10}^{-9} \\
{\small 10}^{-7} \\
{\small -} \\
{\small -}%
\end{array}
$ & $%
\begin{array}{c}
{\small 1.5\times10}^{-12} \\
{\small 10}^{-10} \\
{\small -} \\
{\small -}%
\end{array}
$ & $%
\begin{array}{c}
{\small 2281} \\
{\small 2285} \\
{\small 2285} \\
{\small 2285}%
\end{array}
$ & \multicolumn{1}{|l|}{$%
\begin{array}{c}
{\small 1.2\times10}^{-3} \\
{\small 1.6\times10}^{-3} \\
{\small 400} \\
{\small 6.9\times10}^{-3}%
\end{array}
$} \\ \hline
\end{tabular}
\end{center}

{\small Table II. Accuracy of the LL2, LLRK4, ode45 and ode15s
schemes in the integration of examples (}${\small 2}${\small
)-(}${\small 7}${\small ).
With the symbol * is denoted the Matlab code used to set the time partition }%
${\small (t)}_{h}${\small \ in each example. NS denotes the number
of steps
required for each scheme to compute the solution on }${\small (t)}_{h}$%
{\small .}\newline

The results of the integration of these equations for each scheme
are shown in Table II. For illustration, Figure 2 shows the path of the variable $%
\mathbf{x}_{1}$ and its approximation $\mathbf{y}_{1}$ obtained by the LLRK$%
4 $ scheme in the integration of these equations. Remarkable, in all
the examples, the relative error of the solution obtained by the
LLRK$4$ scheme is much lower that those of the LL$2$, ode$45$ and
ode$15s$ with the same or lower number of steps. These results are
easily comprehensible for five reasons: 1) the dynamics of these
equations strongly depend on the linear part of their vector fields;
2) the LL$2$ and LLRK$4$ schemes preserve the stability of the
linear systems for all step-sizes, which is not so for conventional
explicit integrators; 3) the LL$2$ and LLRK$4$ schemes are able to\
"exactly" (up to the precision of the floating-point arithmetic)
integrate linear ODEs, which is a property not satisfied by for
conventional explicit and implicit schemes; 4) the LL$2$ and LLRK$4$
schemes update the exact Jacobian of the vector field at each
integration step, which is not done by most of conventional schemes;
and 5) the LLRK$4$ has higher order of convergence than the LL$2$
scheme. Further, note that although the LLRK$4$ scheme is not
designed for the integration of stiff ODEs in general (because the
auxiliary equation (\ref{ODE-LLS-14})-(\ref{ODE-LLS-14b}) might
\textquotedblleft inherit\textquotedblright\ the stiffness of the
original one) it is clear that, by construction, it is suitable for
equations with stiffness confined to the linear part. Example are
the classes of stiff linear and semilinear equations represented in
the Examples $2$ and $3$. This is so, because at each integration
step the stiff linear term is
locally removed from the vector field of the auxiliary equation (\ref%
{ODE-LLS-14}) and, in this way, the stiff linear part is well
integrated by
the (A-stable) LL scheme and the resulting non-stiff equation (\ref%
{ODE-LLS-14}) can be well integrated by the explicit RK scheme.

The following two examples are well known nonlinear oscillators.

\textbf{Example 6}. Non-stiff nonlinear:%
\begin{align*}
\frac{d\mathbf{x}_{1}}{dt}& =1+\mathbf{x}_{1}^{2}\mathbf{x}_{2}-4\mathbf{x}%
_{1}, \\
\frac{d\mathbf{x}_{2}}{dt}& =3\mathbf{x}_{1}-\mathbf{x}_{1}^{2}\mathbf{x}_{2}
\end{align*}%
where $\mathbf{x}_{1}(t_{0})=1.5$, $\mathbf{x}_{2}(t_{0})=3$, and $%
[t_{0},T]=[0,20]$. This equation, known as Brusselator equation, is a
typical test equation of non-stiff nonlinear problems (see, e.g., \cite%
{Hairer-Wanner93} ) .

\textbf{Example 7}. Mild-stiff nonlinear:%
\begin{align*}
\frac{d\mathbf{x}_{1}}{dt}& =\mathbf{x}_{2}, \\
\frac{d\mathbf{x}_{2}}{dt}& =\varepsilon ((1-\mathbf{x}_{2}^{2})\mathbf{x}%
_{1}+\mathbf{x}_{2}),
\end{align*}%
where $\varepsilon =10^{3}$, $\mathbf{x}_{1}(t_{0})=2$, $\mathbf{x}%
_{2}(t_{0})=0$, and $[t_{0},T]=[0,2]$. This equation, known as Van der Pol
equation, is a typical test equation of stiff nonlinear problems (see, e.g.,
\cite{Hairer-Wanner96} ).

\begin{figure}[ptb]
\begin{center}
\includegraphics[width=1.0\textwidth]{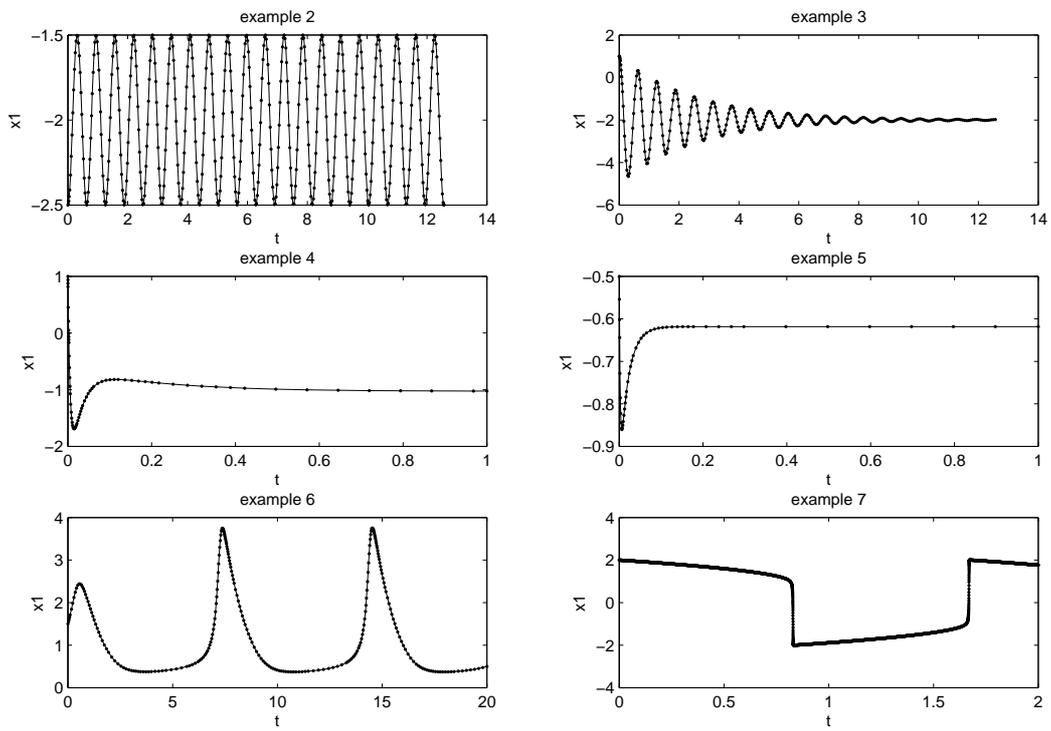}%
\caption{Path of the variables $\mathbf{x}_{1}$ (solid line)
and its approximation $\mathbf{y}_{1}$ (dots) obtained by the LLRK4 scheme in
the integration of the ODEs of examples 2-7. The time partition $(t)_{h}$ used
in each case for $\mathbf{y}_{1}$ is specified in Table II. The "exact" path
of $\mathbf{x}_{1}$ is computed with the Matlab code ode15s with
$RT=RA=10^{-13}$ on a very thin partition.}%
\end{center}
\end{figure}

The results of the integration of last two equations for each scheme
are also shown in Table II and Figure $2$. For these equations, the
relative error of the solutions obtained by the LLRK$4$ scheme is
much lower that
those of the LL$2$, but quite similar to those of the codes ode$45$ and ode$%
15s$ (which have higher order of convergence). This indicates that the LLRK$%
4 $ scheme is also appropriate for integrating non-stiff and
mild-stiff nonlinear problems as well.

In summary, results of Table II clearly indicate that the non adaptive
implementation of the LLRK$4$ scheme provides similar or much better
accuracy than the Matlab codes with equal or lower number of steps in the
integration of variety of equations. This suggests that adaptive
implementations the LLRK discretizations might archive similar accuracy than
the Matlab codes with lower or much lower number of steps, a subject that
has been already studied in \cite{Sotolongo11,Jimenez12}.

Finally, we want to point out that equations of type (\ref{Semi-Linear})
frequently arises from the discretization of nonlinear partial differential
equations. In such a case, mild or high dimensional ODEs of that form are
obtained and, as it is obvious, LLRK schemes like (\ref{LLRK4 scheme}) based
on Pad\'{e} approximations are not appropriate. Nevertheless, because the
flexibility of the high order Local Linearization approach described in
Section \ref{Section LLA}, feasible high order LL schemes can be designed
for this purpose too. For instance, by taking into account that
\begin{equation*}
\mathbf{\phi }(t_{n},\mathbf{y}_{n};\frac{h_{n}}{2})=\mathbf{\varphi }(\frac{%
h_{n}}{2}\mathbf{f}_{\mathbf{x}}(\mathbf{y}_{n}))\mathbf{f}(\mathbf{y}_{n}),
\end{equation*}%
where $\mathbf{\varphi }(z)=(e^{z}-1)/z$, the LLRK$4$ scheme (\ref{LLRK4
scheme}) can easily modified to defined an order $4$ LLRK scheme for high
dimensional ODEs. Indeed, such scheme can be defined by the same expression (%
\ref{LLRK4 scheme}), but replacing the formulas of $\widetilde{\mathbf{\phi }%
}(t_{n},\widetilde{\mathbf{y}}_{n};\frac{h_{n}}{2})$ and $\widetilde{\mathbf{%
\phi }}(t_{n},\widetilde{\mathbf{y}}_{n};h_{n})$ by
\begin{equation*}
\widetilde{\mathbf{\phi }}(t_{n},\widetilde{\mathbf{y}}_{n};\frac{h_{n}}{2})=%
\widetilde{\mathbf{\varphi }}(\frac{h_{n}}{2}\mathbf{f}_{\mathbf{x}}(%
\widetilde{\mathbf{y}}_{n}))\mathbf{f}(\widetilde{\mathbf{y}}_{n})
\end{equation*}%
and%
\begin{equation*}
\widetilde{\mathbf{\phi }}(t_{n},\widetilde{\mathbf{y}}_{n};h_{n})=\left(
\frac{h_{n}}{4}\mathbf{f}_{\mathbf{x}}(\widetilde{\mathbf{y}}_{n})\widetilde{%
\mathbf{\varphi }}(\frac{h_{n}}{2}\mathbf{f}_{\mathbf{x}}(\widetilde{\mathbf{%
y}}_{n}))+\mathbf{I}\right) \widetilde{\mathbf{\phi }}(t_{n},\widetilde{%
\mathbf{y}}_{n};\frac{h_{n}}{2}),
\end{equation*}%
respectively, where $\widetilde{\mathbf{\varphi }}$ denotes the
approximation to $\mathbf{\varphi }$ provided by the Krylov subspace method
(see, i.e., \cite{Hochbruck-etal98}). Then, a comparison with
exponential-type integrators designed for high dimensional equations of the
form (\ref{Semi-Linear}) can be carried out, but this subject is out of the
scope of this paper.

\section{Conclusions}

In summary, this paper has shown the following: 1) the LLRK approach defines
a general class of high order A-stable explicit integrators; 2) in contrast
with others A-stable explicit methods (such as Rosenbrock or the Exponential
integrators), the RK coefficients involved in the LLRK integrators are not
constrained by any stability condition and they just need to satisfy the
usual, well-known order conditions of RK schemes, which makes the LLRK
approach more flexible and simple; 3) LLRK integrators have a number of
convenient dynamical properties as the linearization preserving and the
conservation of the exact solution dynamics around hyperbolic equilibrium
points and periodic orbits; 4) unlike the majority of the previous published
works on exponential integrators, the above mentioned convergence, stability
and dynamical properties are studied not only for the discretizations but
also for the numerical schemes that implement them in practice; 5) because
of the flexibility in the numerical implementation of the LLRK methods,
specific-purpose schemes can be designed for certain classes of ODEs, e.g.,
for stiff equations, high dimensional systems of equations, etc.; 6) order $%
4 $ LLRK formula considered in this paper provides similar or much better
accuracy than the order $5$ Matlab codes with equal or lower number of steps
in the integration of variety of equations, as well as, much better
reproduction of the dynamics of the underlying equation near stationary
hyperbolic points.

Finally, it is worth to point out that theoretical properties of the LLRK
methods studied here strongly support the results of the numerical
experiments carried out by the authors in previous works \cite{de la Cruz 06}%
, \cite{de la Cruz Ph.D. Thesis}, in which the performance of other LLRK
schemes is compared with that of existing explicit and implicit schemes.

\end{document}